\newcommand{\ddim}{{d}} % space dimension >= 3
\newcommand{\cstNewtonLp}[1]{{c_{{#1},\ddim}}} % const in the L^p estimate of F
\newcommand{\cstNewtonLn}{c_{\ddim}} % const in the ln estimate of naX F
\newcommand{\cstMoment}[1]{c_{{#1}}} % const in the estimate of local and global moments
\newcommand{\volS}[1]{\tau_{{#1}}} % volume of S^d
\newcommand{\surS}[1]{\omega_{{#1}}} % surface of S^d
\newcommand{\newtonian}[1]{{\Gamma\left({#1}\right)}}
\newcommand{\ball}[2]{B\left({#1};{#2}\right)}
\newcommand{\EvoOp}[2]{\mathfrak{U}_{{#1},{#2}}}
\newcommand{\minR}{A}
\newcommand{\globalBound}[2]{\mathfrak{B}_{M,\kappa,{#1}}^{{#2}}}
\newcommand{\NN}{\mathbb N}
\newcommand{\RN}{\mathbb R}
\newcommand{\CN}{\mathbb C}
\newcommand{\mi}{\mathrm i}
\newcommand{\SP}[2]{\left\langle {#1} , {#2} \right\rangle}
\newcommand{\norm}[1]{\left|\left|{#1}\right|\right|}
\newcommand{\symp}[2]{\left[{#1},{#2}\right]}
\newcommand{\sympf}[2]{\omega\left({#1},{#2}\right)}
\newcommand{\Lp}[2]{\mathcal{L}^{{#1}}_{{#2}}}
\newcommand{\Lpn}[3]{\norm{{#1}}_{\Lp{{#2}}{{#3}}}}
\newcommand{\Wp}[2]{\mathcal{W}^{{#1}}_{{#2}}}
\newcommand{\Cp}[2]{\mathcal{C}^{{#1}}_{{#2}}}
\newcommand{\Ap}[2]{\mathcal{A}^{{#1}}_{{#2}}}
\newcommand{\Apn}[3]{\norm{{#1}}_{\Ap{{#2}}{{#3}}}}
\newcommand{\Bp}[2]{\mathcal{B}^{{#1}}_{{#2}}}
\newcommand{\Bpn}[3]{\norm{{#1}}_{\Bp{{#2}}{{#3}}}}
\newcommand{\id}{\textrm{id}}
\newcommand{\unity}{\ensuremath{\mathbbm{1}}}
\newcommand{\intd}{\mathrm d}
\newcommand{\abs}[1]{\left|{#1}\right|}
\newcommand{\convol}[2]{\left({#1}*{#2}\right)}
\newcommand{\moment}[2]{\mathfrak{m}_{{#2}}\left({{#1}}\right)}
\newcommand{\Moment}[2]{\mathfrak{M}_{{#2}}\left({{#1}}\right)}
\newcommand{\Cm}[2]{\mathcal C^{#1}\left({#2}\right)}
\newcommand{\Ccm}[2]{\mathcal C_c^{#1}\left({#2}\right)}
\newcommand{\supp}{\text{supp}}
\newcommand{\HamVec}[1]{X_{{#1}}}
\newcommand{\HF}{\mathcal{H}}
\newcommand{\UF}{\mathcal{U}}
\newcommand{\HVF}{\mathcal{H}_{\text{Vl.}}}
\newcommand{\RX}{{\RN^{\ddim}_{\bx}}}
\newcommand{\RV}{{\RN^{\ddim}_{\bv}}}
\newcommand{\RZ}{{\RN^{2\ddim}_{\bz}}}
\newcommand{\RNp}{\RN_{\geq 0}}
\newcommand{\naX}{\nabla_{\bx}}
\newcommand{\naV}{\nabla_{\bv}}
\newcommand{\naZ}{\nabla_{\bz}}
\newcommand{\naW}{\nabla_{\bw}}
\newcommand{\phF}[1]{K_{{#1}}}
\newcommand{\del}[2]{\frac{\partial {#1}}{\partial {#2}}}
\newcommand{\Del}[1]{\mathrm D^{#1}}
\newcommand{\dKer}[3]{{#1}_{{#2}}^{({#3})}}
\newcommand{\bx}{\textbf x}
\newcommand{\bv}{\textbf v}
\newcommand{\bw}{\textbf w}
\newcommand{\bz}{\textbf z}
\newcommand{\by}{\textbf y}
\newcommand{\bn}{\textbf 0}
\newcommand{\bound}[2]{\mathfrak{b}_{{#1}}^{{#2}}}
\begin{document}

% Environment settings

\theoremstyle{plain}
\newtheorem{thm}{Theorem}[section]
\newtheorem{lem}[thm]{Lemma}
\newtheorem{cor}[thm]{Corollary}
\newtheorem{prop}[thm]{Proposition}
\newtheorem{conj}[thm]{Conjecture}

\theoremstyle{definition}
\newtheorem{defn}[thm]{Definition}

\theoremstyle{remark}
\newtheorem{rmk}[thm]{Remark}
\newtheorem{exam}[thm]{Example}

\title{Generalized symplectization of Vlasov dynamics and application to the Vlasov-Poisson system}
\author{R.A. Neiss
	\thanks{
		Electronic address: \texttt{rneiss@math.uni-koeln.de}
	}
}
\affil{
	Universität zu Köln, Mathematisches Institut \\
	Weyertal 86-90, 50931 Köln, Germany
}
\date{June 4, 2018}

\maketitle

\begin{abstract}
\noindent In this paper, we study a Hamiltonian structure of the Vlasov-Poisson system, first mentioned by Fröhlich, Knowles, and Schwarz \cite{froehlichknowlesschwarz}. To begin with, we give a formal guideline to derive a Hamiltonian on a subspace of complex-valued $\Lp{2}{}$ integrable functions $\alpha$ on the one particle phase space $\RZ$, s.t. $f=\abs{\alpha}^2$ is a solution of a collisionless Boltzmann equation. The only requirement is a sufficiently regular energy functional on a subspace of distribution functions $f\in\Lp{1}{}$. Secondly, we give a full well-posedness theory for the obtained system corresponding to Vlasov-Poisson in $\ddim\geq3$ dimensions. Finally, we adapt the classical globality results \cite{lionsperthame,pfaffelmoser,schaeffer} for $\ddim=3$ to the generalized system.
\end{abstract}

\section{Overview and motivation}

Vlasov dynamics is an important possibility to describe collective behavior of many particle systems by reducing the information to a distribution on the one particle phase space. The resulting evolution equation has been analyzed for a huge number of different physical systems, e.g., interactions, symmetries, boundary conditions.

To make a connection to finite-dimensional Hamiltonian dynamics, the conjugate variable to the phase space distribution function $f\in\Lp{1}{}$ has been missing. Nevertheless, symplectization of dynamics is a relevant question, as it could make accessible methods from finite-dimensional theory. 

Early contributions were made by Morrison \cite{morrison}, as well as Marsden and Weinstein \cite{marsdenweinstein}, who introduced a Lie-Poisson bracket for the Maxwell-Vlasov system. 

Ye, Morrison, and Crawford \cite{yemorrisoncrawford} adapted these findings and formally derived a Lie-Poisson bracket for the linear space of distribution functions of the Vlasov-Poisson system. In order to construct a symplectic framework, they hope to remove degeneracy of this bracket by foliating the linear space into symplectic leaves. They are the orbits of a symplectomorphism group acting by composition. 

Conjecturing that all these symplectomorphisms can be obtained as time-1 maps of autonomous Hamiltonian systems, they compute a Poisson bracket restricted on the leaf, that is claimed to no longer be degenerate. They expect that this non-de\-generate bracket is the cosymlectic form of the Kirillov-Kostant-Souriau form from finite-dimensional theory, rendering the leaf a symplectic manifold. 

As intriguing as these ideas are, rigorous verification of the formal computation still seems to be missing. In addition, the derived bracket on the symplectic leaf is by its implicit construction unhandy to solve rigorous problems in a PDE setup. It seems that no such results have arisen from it. \\

\noindent A more promising ansatz occurs in the work of Fröhlich, Knowles, and Schwarz as starting point to compute mean-field limits of bosonic quantum systems \cite[Sec.2]{froehlichknowlesschwarz}. Therein, letting the Vlasov density $f=\abs{\alpha}^2$, the symplectic form $\sympf{\alpha}{\beta} \equiv \Im \SP {\alpha}{\beta}_{\Lp{2}{\bz}}$ on $\Lp{2}{\bz}$ allows to find a new Hamiltonian $\HVF(\alpha)$, that no longer can be seen as the system's energy, but leads to a Hamiltonian evolution equation for $\alpha$, that reproduces the Vlasov equation for $\abs{\alpha}^2$. In their work, they treat the explicit case of non-relativistic dynamics with velocity-independent two-body interaction potential. \\

\noindent Inspired by \cite[Sec.2]{froehlichknowlesschwarz}, we intend to give a formal concept for symplectization of any collisionless Boltzmann equation where the starting point is the energy functional $\HF(f)$, $f$ a non-negative distribution function on the one particle phase space. We solve the formally obtained equations for the example of Vlasov-Poisson, i.e., non-relativistic motion in Newtonian two-body interaction, and give a full well-posedness theory for it, that is just vaguely discussed in \cite{froehlichknowlesschwarz}. At last, we shortly adapt the classical global solution results \cite{lionsperthame,pfaffelmoser,schaeffer}, whose globality criteria due to the close relation $f=\abs{\alpha}^2$ almost coincide with ours.

\section{Conceptual outline}

In this section, we formally derive a Hamiltonian evolution equation for a general collisionless Boltzmann equation for a single particle type with non-negative density $f$. We remark that the concept is easily generalized to multiple particle types $\{f_\nu\}$, but we omit the details for the sake of simplicity in notation.

\subsection{General Vlasov dynamics}

The starting point is the well-known derivation of the classical Vlasov equation. For the most general setup we consider a distribution function $f: \RZ \to\RN$ on the single particle phase space $\RZ=\RX\times\RV$. We assume that the full system's energy functional depending on the density $f$ is given by $\HF(f)\in\RN$; in the non-relativistic case we have in mind
\begin{equation}
\HF (f) \equiv \int_{\RZ} \frac{\abs{\bv}^2}{2} f(\bz)~ \intd\bz + \UF(f).
\end{equation}
$\UF(f)$ is some type of potential energy term. If we only have two-particle interaction that does not depend on velocity, e.g.,
\begin{equation}
\UF (f) = \frac{1}{2}\int_{\RZ} \int_{\RZ} \Gamma(\bx_1,\bx_2)~ f(\bz_1)~ f(\bz_2)~ \intd\bz_1~ \intd\bz_2
\end{equation}
for some symmetric two particle interaction potential $\Gamma:\RX\times\RX\to\RN$. We restrict our general choice of $\HF(f)$, by the requirement that
\begin{equation}
\label{eqn:functional-derivative}
\Del{1}\HF(f)[\delta f] \equiv \lim_{\epsilon\to 0} \frac{1}{\epsilon} \left[\HF(f+\epsilon~ \delta f) - \HF(f)\right] = \int_{\RZ} \dKer{H}{f}{1}(\bz)~ \delta f(\bz)~ \intd\bz
\end{equation}
exists in some sense for a kernel function $\dKer{H}{f}{1}: \RZ\to\RN$. Again, have the special form
\begin{equation*}
\dKer{H}{f}{1}(\bz) = \frac{\abs{\bv}^2}{2} + \dKer{U}{f}{1}(\bx), \quad \dKer{U}{f}{1}(\bx) = \int_{\RZ} \Gamma(\bx,\bar{\bx})~ f(\bar{\bz})~ \intd\bar{\bz}
\end{equation*}
in mind. $\dKer{H}{f}{1}$ can physically be interpreted as the point mass 1 single particle autonomous Hamiltonian for a static background distribution $f$. Of course, one can replace $f$ by some time dependent $f(t)$, which will render the Hamiltonian non-autonomous. \\

\noindent Actually, in the derivation of classical Vlasov dynamics, we now will have to solve the self-consistent system of characteristic equations
\begin{align}
\nonumber
\partial_t X(t,s,\bz) =& \left(\naV \dKer{H}{f(t)}{1}\right)(X(t,s,\bz),V(t,s,\bz)), \\
\label{eqn:general-characteristic-system}
\partial_t V(t,s,\bz) =& \left(-\naX \dKer{H}{f(t)}{1}\right)(X(t,s,\bz),V(t,s,\bz))
\end{align}
with initial condition $(X(t=s,s,\bz),V(t=s,s,\bz))=\bz=(\bx,\bv)$ and $f(t,\bz) = \mathring{f}(X(0,t,\bz),V(0,t,\bz))$. This set of equations can be reduced to the transport equation
\begin{align}
\nonumber
0 =&~ \partial_t f(t,\bz) + \left(\naV \dKer{H}{f(t)}{1}\right)(\bz) \cdot \naX f(t,\bz) + \left(-\naX \dKer{H}{f(t)}{1}\right)(\bz) \cdot \naV f(t,\bz) \\
\label{eqn:general-vlasov}
=&~ \partial_t f(t,\bz) + \symp{f(t)}{\dKer{H}{f(t)}{1}}(\bz).
\end{align}
with initial data $f(0)=\mathring{f}$. $\symp{\alpha}{\beta}(\bz) \equiv \naX\alpha(\bz) \cdot \naV\beta(\bz) - \naV\alpha(\bz) \cdot \naX\beta(\bz)$ is the standard Poisson bracket for differentiable functions $\alpha, \beta:\RZ=\RX\times\RV\to\CN$. \eqref{eqn:general-vlasov} is known as the Vlasov equation which has been studied for many different potentials and even single particle phase spaces. 

\subsection{Guideline: symplectization of Vlasov dynamics}

Now our focus shifts towards constructing a Hamiltonian evolution equation from some Vlasov equation. The technical assumptions on the Hamiltonian are $\HF(f)\in\RN$ and it is twice differentiable in some sense, i.e., not only $\Del{1} \HF(f) [\delta f]$, but also
\begin{align*}
\Del{2}\HF(f)[\delta f_1,\delta f_2] \equiv& \lim_{\epsilon\to 0} \frac{1}{\epsilon} \left[\Del{1}\HF(f+\epsilon~\delta f_1)[\delta f_2] - \Del{1}\HF(f)[\delta f_2]\right] \\
=& \int_{\RZ} \int_{\RZ} \dKer{H}{f}{2}(\bz_1,\bz_2)~ \delta f_1(\bz_1)~ \delta f_2(\bz_2)~ \intd\bz_1~\intd\bz_2
\end{align*}
exists. We will describe a canonical way to find a complex valued Vlasov type equation, that we expect to allow adaptation of methods from symplectic geometry. \\

\noindent In most applications, it is physically reasonable to assume that the overall particle mass is bounded, i.e., $f\in\Lp{1}{\bz}$, and non-negative (otherwise consider $f=f^+-f^-$ with the same characteristic equations). If we now take some $\alpha:\RZ\to\CN$, s.t. $f=\abs{\alpha}^2$, we can exploit that $\Lp{2}{\bz}$ of complex valued functions allows to construct the translation invariant symplectic form
\begin{equation}
\sympf{\alpha}{\beta} \equiv \Im \int_{\RZ} \alpha(\bz)~ \bar{\beta}(\bz)~\intd\bz.
\end{equation}
It is now possible to define (on a set of suitable functions $\alpha\in\Lp{2}{\bz}$) the new \textbf{Hamiltonian Vlasov functional}
\begin{align*}
\HVF(\alpha) \equiv~ \frac{1}{2} \Im~ \Del{1}\HF\left(\abs{\alpha}^2\middle)\middle[\symp{\bar{\alpha}}{\alpha}\right] = \frac{1}{2} \Im\int_{\RZ} \dKer{H}{\abs{\alpha}^2}{1}(\bz)~ \symp{\bar{\alpha}}{\alpha}(\bz)~ \intd\bz.
\end{align*}
Now, formal computation of the derivative shows
\begin{align*}
\Del{1}\HVF(\alpha)[\delta\alpha] =&\lim_{\epsilon\to 0} \frac{1}{\epsilon} \left[\HVF(\alpha+\epsilon~ \delta\alpha)-\HVF(\alpha)\right] \\
=&~ \frac{1}{2} \Im~ \Del{1}\HF\left(\abs{\alpha}^2\middle)\middle[2\mi~\Im\symp{\delta\bar{\alpha}}{\alpha}\right] + \frac{1}{2} \Im~ \Del{2}\HF\left(\abs{\alpha}^2\middle)\middle[2\Re~\left(\alpha~ \delta\bar{\alpha}\right), \symp{\bar{\alpha}}{\alpha}\right] \\
=&~ \Im\int_{\RZ} \left(\symp{\alpha}{\dKer{H}{\abs{\alpha}^2}{1}} (\bar{\bz}) + \alpha(\bar{\bz})~ \underbrace{\int_{\RZ} \symp{\bar{\alpha}}{\alpha}(\bz)~ \dKer{H}{\abs{\alpha}^2}{2}(\bar{\bz},\bz)~ \intd\bz}_{\equiv \phF{\alpha}(\bar{\bz})}\right)~ \delta\bar{\alpha}(\bar{\bz})~ \intd\bar{\bz} \\
\equiv& -\sympf{\HamVec\HVF(\alpha)}{\delta\alpha},
\end{align*}
defining the Hamiltonian vector field $\HamVec{\HVF}(\alpha)$ in the usual manner. Finally, it is possible to write down a \textbf{Hamiltonian} evolution equation for $\alpha$
\begin{align}
\label{eqn:general-hamilton-vlasov}
\partial_t\alpha(t,\bz) =&~ \HamVec\HVF(\alpha(t))(\bz) = \symp{\dKer{H}{\abs{\alpha(t)}^2}{1}}{\alpha(t)}(\bz) - \phF{\alpha(t)}(\bz)~ \alpha(t,\bz) \\
\nonumber
=&~ -\naX\alpha(t,\bz) \cdot \left(\naV\dKer{H}{\abs{\alpha(t)}^2}{1}\right)(\bz) - \naV\alpha(t,\bz) \cdot \left(-\naX\dKer{H}{\abs{\alpha(t)}^2}{1}\right)(\bz) - \phF{\alpha(t)}(\bz)~ \alpha(t,\bz),
\end{align}
where the function $\phF{\cdot}$ is seen to be imaginary valued. This is extremely important, as it renders the following formal result.

\begin{prop}
\label{prop:hvl-solves-vl}
Let $\alpha: I\times\RZ\to\CN$ be a local solution of \eqref{eqn:general-hamilton-vlasov}. Then $f(t,\bz)\equiv\abs{\alpha(t,\bz)}^2$ is a local solution of the classical Vlasov equation \eqref{eqn:general-vlasov}.
\end{prop}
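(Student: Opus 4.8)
The plan is to differentiate $f(t,\bz)=\abs{\alpha(t,\bz)}^2=\alpha(t,\bz)\,\overline{\alpha(t,\bz)}$ in $t$ and substitute \eqref{eqn:general-hamilton-vlasov}. Since $\partial_t f=2\Re\bigl(\bar\alpha\,\partial_t\alpha\bigr)$, inserting the second form of \eqref{eqn:general-hamilton-vlasov} and abbreviating $h\equiv\dKer{H}{\abs{\alpha(t)}^2}{1}=\dKer{H}{f(t)}{1}$, which is real-valued, gives
\begin{equation*}
\partial_t f=2\Re\left(\bar\alpha\left(-\naX\alpha\cdot\naV h+\naV\alpha\cdot\naX h\right)-\phF{\alpha}\,\abs{\alpha}^2\right).
\end{equation*}

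First I would observe that the nonlocal term $\phF{\alpha}\,\abs{\alpha}^2$ is invisible to the real part: as noted after \eqref{eqn:general-hamilton-vlasov}, $\phF{\alpha}$ is purely imaginary, hence so is $\phF{\alpha}\,\abs{\alpha}^2$, so its real part vanishes. If one wants this self-contained it follows from $\overline{\symp{\bar\alpha}{\alpha}}=-\symp{\bar\alpha}{\alpha}$ together with the reality and symmetry of the kernel $\dKer{H}{f}{2}$ entering the definition of $\phF{\alpha}$. Then I would use the elementary identities $\Re(\bar\alpha\,\naX\alpha)=\tfrac12\naX f$ and $\Re(\bar\alpha\,\naV\alpha)=\tfrac12\naV f$ to rewrite what is left as a Poisson bracket of the two real functions $f$ and $h$:
\begin{equation*}
2\Re\left(\bar\alpha\left(-\naX\alpha\cdot\naV h+\naV\alpha\cdot\naX h\right)\right)=-\naX f\cdot\naV h+\naV f\cdot\naX h=-\symp{f}{h}.
\end{equation*}

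Combining the two steps yields $\partial_t f+\symp{f(t)}{\dKer{H}{f(t)}{1}}=0$, which is precisely \eqref{eqn:general-vlasov} (and, tracking initial data, $f(0)=\abs{\alpha(0)}^2$). I do not expect a genuine obstacle here: the statement is a formal identity, so I would not dwell on regularity beyond noting that the computation only needs $\alpha(t)$ differentiable in $\bz$ and the kernels $\dKer{H}{\abs{\alpha}^2}{1}$, $\dKer{H}{\abs{\alpha}^2}{2}$ to be meaningful along the solution, both of which are implicit in "$\alpha$ is a local solution of \eqref{eqn:general-hamilton-vlasov}". The one point I would state explicitly rather than leave to the reader is the imaginary-valuedness of $\phF{\alpha}$, since the entire mechanism by which the nonlocal term drops out of the equation for $\abs{\alpha}^2$ rests on it.
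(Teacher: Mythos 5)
Your proof is correct and follows exactly the route the paper uses: multiply the Hamiltonian evolution equation by $2\bar\alpha$, take the real part, and appeal to $2\Re\,\bar\alpha\,\partial\alpha=\partial\abs{\alpha}^2$ together with $\Re\,\phF{\alpha}=0$. The paper states this in two sentences; your version spells out the same steps in more detail, including the reason $\phF{\alpha}$ is imaginary, which is a reasonable elaboration but not a different argument.
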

\begin{proof}
Multiply \eqref{eqn:general-hamilton-vlasov} with $2\bar{\alpha}$ and apply $\Re$. Upon noting $2\Re~\bar{\alpha}~ \partial\alpha = \partial\abs{\alpha}^2$ for any first order derivative $\partial$ and $\Re~ \phF{\alpha}=0$, we recover \eqref{eqn:general-vlasov} for $f=\abs{\alpha}^2$. \qedhere
\end{proof}

\noindent A second formal result is a transport formula for the solution of the Hamiltonian equation \eqref{eqn:general-hamilton-vlasov}.

\begin{prop}
\label{prop:general-transport-formula}
Let $\alpha:I\times\RZ\to\CN$ be some function, $0\in I$. It is a local solution of \eqref{eqn:general-hamilton-vlasov} if and only if
\begin{equation}
\label{eqn:general-transport-formula}
\alpha(t,\bz) = \alpha(0,Z(0,t,\bz))~ \exp\left(-\int_{0}^{t} \phF{\alpha(\tau)}(Z(\tau,t,\bz))~ \intd\tau\right),
\end{equation}
where $Z=(X,V)$ is the solution map of the general characteristic system \eqref{eqn:general-characteristic-system}.
\end{prop}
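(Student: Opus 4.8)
The plan is to prove the equivalence by the standard method of characteristics: substituting the ansatz \eqref{eqn:general-transport-formula} into \eqref{eqn:general-hamilton-vlasov} and checking, conversely, that any solution of \eqref{eqn:general-hamilton-vlasov} must be constant (up to the exponential weight) along the flow $Z$. Before anything else I would record the group/cocycle properties of the characteristic flow that I will need: $Z(\tau,\tau,\bz)=\bz$, $Z(\tau_1,\tau_2,Z(\tau_2,\tau_3,\bz))=Z(\tau_1,\tau_3,\bz)$, and — differentiating the latter or reading it off \eqref{eqn:general-characteristic-system} — the backward transport identity $\partial_t\bigl[g(Z(\tau,t,\bz))\bigr] = -\symp{\dKer{H}{f(t)}{1}}{g}\bigl(Z(\tau,t,\bz)\bigr)$ for any differentiable $g$, where $f(t)=\abs{\alpha(t)}^2$; note the sign, which comes from the fact that $t$ is the \emph{final} time in $Z(\tau,t,\bz)$, and that this is exactly the transport term appearing on the right-hand side of \eqref{eqn:general-hamilton-vlasov}. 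Because $\phF{\alpha(t)}$ depends on $\alpha$, this is genuinely a self-consistent (implicit) formula, not a closed-form solution, so the two implications are really statements about a fixed trajectory $\alpha$ and its induced flow $Z$.

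For the ``only if'' direction, assume $\alpha$ solves \eqref{eqn:general-hamilton-vlasov}. Fix $(t,\bz)$ and define $\psi(\tau)\equiv\alpha(\tau,Z(\tau,t,\bz))$ — the value of $\alpha$ pulled back along the characteristic that sits at $\bz$ at time $t$. Differentiate in $\tau$ using the chain rule: the explicit $\partial_\tau\alpha$ term contributes $\symp{\dKer{H}{f(\tau)}{1}}{\alpha}-\phF{\alpha(\tau)}\alpha$ evaluated at $Z(\tau,t,\bz)$, while the $\partial_\tau$ hitting the flow argument contributes $\naZ\alpha\cdot\partial_\tau Z(\tau,t,\bz)$; by \eqref{eqn:general-characteristic-system} this second term is exactly $-\symp{\dKer{H}{f(\tau)}{1}}{\alpha}$ at the same point, so the two Poisson-bracket terms cancel and I am left with $\psi'(\tau) = -\phF{\alpha(\tau)}(Z(\tau,t,\bz))\,\psi(\tau)$. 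This is a scalar linear ODE in $\tau$ along the curve, so $\psi(t) = \psi(0)\exp\!\bigl(-\int_0^t \phF{\alpha(\tau)}(Z(\tau,t,\bz))\,\intd\tau\bigr)$; since $\psi(t)=\alpha(t,\bz)$ and $\psi(0)=\alpha(0,Z(0,t,\bz))$, this is \eqref{eqn:general-transport-formula}.

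For the ``if'' direction, start from \eqref{eqn:general-transport-formula} and differentiate in $t$ — here $t$ appears in three places: as the time slot of $\alpha(0,Z(0,t,\bz))$ only through the flow, inside the integrand $\phF{\alpha(\tau)}(Z(\tau,t,\bz))$ through the flow, and as the upper limit of the integral. The upper-limit term produces $-\phF{\alpha(t)}(Z(t,t,\bz))\,\alpha(t,\bz) = -\phF{\alpha(t)}(\bz)\,\alpha(t,\bz)$, matching the last term of \eqref{eqn:general-hamilton-vlasov}. The flow-derivative terms, collected via the backward transport identity above, reassemble into $\symp{\dKer{H}{f(t)}{1}}{\alpha(t)}(\bz)$ — this is where one must be slightly careful that the contribution from differentiating $Z(\tau,t,\bz)$ inside the integral telescopes correctly against the contribution from $Z(0,t,\bz)$, which is cleanest to see by first rewriting the right-hand side of \eqref{eqn:general-transport-formula} as $\psi(t)$ with $\psi$ as above, i.e. by observing that the solution formula is equivalent to ``$\psi$ solves the scalar linear ODE,'' and then running the computation of the first part in reverse. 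The one genuine obstacle, which I expect to be the main point requiring care rather than cleverness, is the regularity bookkeeping: differentiation under the integral sign and the chain rule along characteristics require that $\alpha$, $\dKer{H}{f}{1}$, $\phF{\alpha}$ and the flow $Z$ be regular enough (e.g. $\alpha\in\Cm{1}{I\times\RZ}$ with $\dKer{H}{f}{1}\in\Cm{1}{}$ so that $Z$ is $\Cm{1}{}$ in all arguments), so the statement should be read — and the proof phrased — for ``local solutions'' in a class where these manipulations are licit; no new ideas beyond the characteristic computation are needed once that class is fixed.
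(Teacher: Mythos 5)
Your proposal is correct and follows essentially the same route as the paper: both directions pull $\alpha$ back along the characteristic flow, reduce the equation to the scalar linear ODE $\psi'(\tau)=-\phF{\alpha(\tau)}\,\psi(\tau)$, and handle the converse by first composing with $Z(t,0,\cdot)$ and then differentiating in $t$. (One minor imprecision: the ``backward transport identity'' in your preamble is not quite right as stated — the Hamiltonian vector field should be evaluated at the base point $\bz$ and act on the composition $g\circ Z(\tau,t,\cdot)$, not be evaluated at $Z(\tau,t,\bz)$ — but your actual argument sidesteps this by running the forward computation in reverse, exactly as the paper does.)
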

\begin{proof}
\textbf{$\Rightarrow$.} Let $\alpha$ be a local solution and $Z$ the solution map of the characteristic system. For any $t,s\in I$, we find
\begin{align*}
\intd_t (\alpha(t,Z(t,s,\bz))) =&~ (\partial_t\alpha)(t,Z(t,s,\bz)) + \symp{\alpha}{\dKer{H}{\abs{\alpha(t)}^2}{1}}(Z(t,s,\bz)) \\
\stackrel{\eqref{eqn:general-hamilton-vlasov}}{=}&~ -\phF{\alpha(t)}(Z(t,s,\bz))~ \alpha(t,Z(t,s,\bz)),
\end{align*}
which is a linear ODE with unique solution for fixed $s,\bz$:
\begin{equation*}
\alpha(t,Z(t,s,\bz)) = \alpha(0,Z(0,s,\bz))~ \exp\left(-\int_{0}^{t} \phF{\alpha(\tau)}(Z(\tau,s,\bz))~\intd\tau\right).
\end{equation*} \\

\noindent\textbf{$\Leftarrow$.} Let $\alpha$ satisfy \eqref{eqn:general-transport-formula} and $Z$ be the solution map of \eqref{eqn:general-characteristic-system}. We then compute
\begin{align*}
\alpha(t,Z(t,0,\bz)) = \alpha(0,\bz)~ \exp\left(-\int_{0}^{t} \phF{\alpha(\tau)} (Z(\tau,0,\bz))~ \intd\tau\right).
\end{align*}
Differentiating both sides w.r.t. $t$ yields exactly \eqref{eqn:general-hamilton-vlasov} again. \qedhere
\end{proof}

\noindent It is reasonable to assume that \eqref{eqn:general-transport-formula} provides the structure to solve the Hamilton-Vlasov system with an iterative scheme. This is rigorously elaborated for the Vlasov-Poisson system in Section \ref{sec:vlasov-poisson}.

\subsection{Degeneracy of Hamiltonian structures}

It is worth noting that the choice of a Hamiltonian functional is not unique\footnote{The author thanks one of the referees for pointing this out}. In fact, if we assume that $\mathcal{G}: \Lp{1}{\bz} \to \RN$ is any map with first functional derivative in the sense of \eqref{eqn:functional-derivative}, then
\begin{equation*}
\widetilde{\HVF}(\alpha) \equiv \HVF(\alpha) + \mathcal{G}\left(\abs{\alpha}^2\right)
\end{equation*}
yields another Hamiltonian whose trajectories also solve \eqref{eqn:general-hamilton-vlasov} with a different imaginary valued phase term $K$. While this degeneracy of the problem might offer exciting opportunities for stability analysis or related issues, for the questions raised in this paper, the canonical choice $\mathcal{G}\equiv 0$ seems sufficient. \\

\noindent We close this section by formally classifying all these Hamiltonians which also lead to solutions of \eqref{eqn:general-hamilton-vlasov}.

\begin{prop}
\label{prop:gaugeing}
Let $\HVF: \Lp{2}{\bz}\to\RN$ be given as above. Then
\begin{equation*}
\widetilde{\HVF}(\alpha) \equiv \HVF(\alpha) + \mathcal{G}\left(\alpha\right)
\end{equation*}
for a smooth functional $\mathcal{G}:\Lp{2}{\bz} \to \CN$ generates trajectories satisfying Proposition \ref{prop:hvl-solves-vl} if and only if
\begin{equation*}
\mathcal{G}(\alpha) = \tilde{\mathcal{G}}\left(\abs{\alpha}^2\right)
\end{equation*}
for some smooth $\tilde{\mathcal{G}}: \Lp{1}{\bz} \to \RN$.
\end{prop}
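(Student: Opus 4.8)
I would prove the two implications by unwinding the first-variation recipe behind \eqref{eqn:general-hamilton-vlasov}. For the ``if'' part, let $\mathcal{G}(\alpha)=\tilde{\mathcal{G}}(\abs{\alpha}^2)$ with $\tilde{\mathcal{G}}$ real-valued and put $g:=\dKer{\tilde{\mathcal{G}}}{\abs{\alpha}^2}{1}$, which is a real kernel. Since the derivative of $\alpha\mapsto\abs{\alpha}^2$ at $\alpha$ in direction $\delta\alpha$ is $2\Re(\bar\alpha\,\delta\alpha)$, the chain rule gives $\Del{1}\mathcal{G}(\alpha)[\delta\alpha]=2\Re\int_{\RZ}g(\bz)\,\bar\alpha(\bz)\,\delta\alpha(\bz)\,\intd\bz=-\sympf{-2\mi\,g\,\alpha}{\delta\alpha}$, so $\HamVec{\mathcal{G}}(\alpha)=-2\mi\,g\,\alpha$ is a purely imaginary multiple of $\alpha$. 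By linearity of $\Del{1}$ the evolution equation of $\widetilde{\HVF}$ is then \eqref{eqn:general-hamilton-vlasov} with $\phF{\alpha}$ replaced by the still purely imaginary valued phase $\phF{\alpha}+2\mi\,g$, and the proof of Proposition \ref{prop:hvl-solves-vl} applies verbatim: multiplying by $2\bar\alpha$ and taking real parts annihilates the imaginary phase and recovers \eqref{eqn:general-vlasov} for $f=\abs{\alpha}^2$.

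For the converse, observe first that $\widetilde{\HVF}=\HVF+\mathcal{G}$ admits a Hamiltonian vector field with respect to the real form $\omega$ only if $\Del{1}\widetilde{\HVF}(\alpha)$ is $\RN$-valued; since $\Del{1}\HVF(\alpha)$ already is, this forces $\Del{1}\mathcal{G}(\alpha)\in\RN$ for all $\alpha$, hence $\Im\mathcal{G}$ is constant on the connected space $\Lp{2}{\bz}$, and up to that dynamically irrelevant constant we may take $\mathcal{G}$ real-valued. Writing $\HamVec{\widetilde{\HVF}}=\HamVec{\HVF}+\HamVec{\mathcal{G}}$, a trajectory satisfies $\partial_t\abs{\alpha}^2=2\Re(\bar\alpha\,\HamVec{\HVF}(\alpha))+2\Re(\bar\alpha\,\HamVec{\mathcal{G}}(\alpha))$, and the first term equals $\symp{\dKer{H}{\abs{\alpha}^2}{1}}{\abs{\alpha}^2}$ by Proposition \ref{prop:hvl-solves-vl}. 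Hence requiring \eqref{eqn:general-vlasov} along every trajectory — equivalently, for every suitable initial datum — is equivalent to the pointwise identity $\Re(\bar\alpha\,\HamVec{\mathcal{G}}(\alpha))\equiv 0$ for every suitable $\alpha$.

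I would then read this identity as gauge invariance. Testing $\Del{1}\mathcal{G}(\alpha)[\delta\alpha]=-\sympf{\HamVec{\mathcal{G}}(\alpha)}{\delta\alpha}$ on infinitesimal phase rotations $\delta\alpha=\mi\,\theta\,\alpha$, $\theta:\RZ\to\RN$, yields after a short computation $\Del{1}\mathcal{G}(\alpha)[\mi\,\theta\,\alpha]=\int_{\RZ}\theta(\bz)\,\Re(\bar\alpha(\bz)\,\HamVec{\mathcal{G}}(\alpha)(\bz))\,\intd\bz=0$. Applying this along the curve $s\mapsto e^{\mi s\theta}\alpha$, which remains in $\Lp{2}{\bz}$ because $\abs{e^{\mi s\theta}\alpha}=\abs{\alpha}$, gives $\frac{\intd}{\intd s}\mathcal{G}(e^{\mi s\theta}\alpha)=0$, hence $\mathcal{G}(u\alpha)=\mathcal{G}(\alpha)$ for every measurable $u:\RZ\to U(1)$ (write $u=e^{\mi\theta}$ with $\theta$ a measurable branch of $\arg u$). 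Therefore $\mathcal{G}(\alpha)$ depends on $\alpha$ only through $\abs{\alpha}$, i.e. through $f:=\abs{\alpha}^2$, and $\tilde{\mathcal{G}}(f):=\mathcal{G}(\sqrt{f})$ defines the desired real-valued functional on non-negative $f\in\Lp{1}{\bz}$ with $\mathcal{G}(\alpha)=\tilde{\mathcal{G}}(\abs{\alpha}^2)$.

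The delicate points lie entirely in the converse. Passing from ``the flow of $\widetilde{\HVF}$ obeys Proposition \ref{prop:hvl-solves-vl}'' to the pointwise identity $\Re(\bar\alpha\,\HamVec{\mathcal{G}}(\alpha))=0$ is, at the level of rigour of this section, a statement about generic trajectories and really needs enough solvability of \eqref{eqn:general-hamilton-vlasov} to reach every suitable $\alpha$. Moreover $f\mapsto\sqrt{f}$ fails to be smooth on the zero set of $f$, so the regularity of $\tilde{\mathcal{G}}$ is not immediate; the fix is that gauge invariance makes $\Del{1}\mathcal{G}(\alpha)[\delta\alpha]$ depend on $\delta\alpha$ only through $\delta(\abs{\alpha}^2)=2\Re(\bar\alpha\,\delta\alpha)$, so $\Del{1}\tilde{\mathcal{G}}(f)[\delta f]:=\Del{1}\mathcal{G}(\alpha)[\delta\alpha]$ for any lift $\delta\alpha$ is well defined, and one iterates for the higher variations. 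I expect this bookkeeping — carrying whatever differentiability is assumed on $\mathcal{G}$ (and on $\HF$) through the square-root lift — to be the main obstacle to a fully rigorous proof.
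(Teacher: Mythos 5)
Your proposal is correct, and it takes a genuinely different route from the paper's proof. The paper argues locally in $\alpha$ and pointwise in $\bz$: fixing an $\alpha$ with full support, it uses that $\Del{1}\mathcal{G}(\alpha)[\cdot]$ is real-valued to conclude $\Im\bigl(\dKer{G}{\alpha}{1}/\bar{\alpha}\bigr)=0$ a.e.\ (precisely the orthogonality $\Re(\bar\alpha\,\HamVec{\mathcal{G}}(\alpha))=0$ you isolate), rewrites $\Del{1}\mathcal{G}(\alpha)[\delta\alpha]$ as an integral against $\delta(\abs{\alpha}^2)=2\Re(\bar\alpha\,\delta\alpha)$, and then closes with the phrase "along with the smoothness assumption, $\mathcal{G}$ has the required form" without carrying out the integration. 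You instead read the same constraint as tangency of $\HamVec{\mathcal{G}}$ to the $U(1)$ gauge orbits, integrate it along $s\mapsto e^{\mi s\theta}\alpha$ to get $\mathcal{G}(u\alpha)=\mathcal{G}(\alpha)$ for every measurable unimodular $u$, and define $\tilde{\mathcal{G}}$ explicitly as $\mathcal{G}\circ\sqrt{\phantom{f}}$. This supplies the missing integration step in the paper's argument; it also derives the orthogonality condition directly from the Vlasov requirement on $\partial_t\abs{\alpha}^2$ rather than from a complex-linear kernel representation $\Del{1}\mathcal{G}(\alpha)(\delta\alpha)=\int\dKer{G}{\alpha}{1}\,\delta\alpha$, a representation that cannot literally hold for a real-valued functional and so needs to be read modulo a $\Re$ or in Wirtinger form — your route sidesteps that ambiguity. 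The paper's version is shorter; yours gives the clean global picture and an explicit construction. Your caveats on rigour are apt and shared by the paper's proof; note only that the orthogonality identity is obtained from $\partial_t\abs{\alpha}^2$ at $t=0$ for each prescribed initial datum, so no genuine solvability of \eqref{eqn:general-hamilton-vlasov} is required, merely the freedom to take any admissible $\alpha$ as initial data.
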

\begin{proof}
At first, we note that $\mathcal{G}(\alpha)\in\RN$, because it would not even generate a Hamiltonian vector field otherwise. Now pick any $\alpha\in\Lp{2}{\bz}$ from the dense subset where $\left\{\alpha=0\right\}$ is a null set. With $U_\gtrless\equiv\{\Im\frac{\dKer{G}{\alpha}{1}}{\bar{\alpha}} \gtrless 0\}$,
\begin{equation*}
0 \leq \int_{U_>} \Im \frac{\dKer{G}{\alpha}{1}(\bz)}{\bar{\alpha}(\bz)}~ \abs{\alpha(\bz)}^2 \intd\bz = \Im\int \dKer{G}{\alpha}{1}(\bz)~ \left(\unity_{U_>} \alpha\right)(\bz)~ \intd\bz = \Im \left(\Del{1}\mathcal{G}(\alpha)\left(\unity_{U_>}\alpha\right)\right) = 0,
\end{equation*}
yielding that $U_>$ and equally $U_<$ are null sets. Therefore, defining $\tilde G_\alpha \equiv \frac{\dKer{G}{\alpha}{1}}{2\bar{\alpha}}\in\RN$ yields
\begin{equation*}
\Del{1}\mathcal{G}(\alpha)(\delta\alpha) = \int_{\RZ} \tilde{G}_\alpha(\bz)~ 2\Re \left(\bar{\alpha}(\bz)~ \delta\alpha(\bz)\right)~ \intd\bz
\end{equation*}
and along with the smoothness assumption, $\mathcal{G}$ has the required form. The converse follows from straightforward computation. \qedhere
\end{proof}

\section{The Hamiltonian Vlasov-Poisson system}
\label{sec:vlasov-poisson}

In this section, we want to use the formal outline described above and apply it to the classical problem of Vlasov-Poisson. The starting point is the full system's energy functional, defined for certain integrable, non-negative distributions on the single particle phase space $f: \RZ=\RX\times\RV\to\RNp$
\begin{equation*}
\HF(f) \equiv \int_{\RZ} \frac{\abs{\bv}^2}{2}~ f(\bz)~ \intd\bz - \frac{1}{2} \int_{\RZ} \int_{\RZ} f(\bz_1)~ f(\bz_2)~ \newtonian{\bx_1-\bx_2}~ \intd\bz_1~ \intd\bz_2,
\end{equation*}
where $\newtonian{\bx}=\frac{\abs{\bx}^{2-d}}{\surS{\ddim}\ddim(2-\ddim)}$ is the Newtonian two-particle interaction potential in $\ddim\geq 3$ dimensions. The first two functional derivatives are now easily seen to be
\begin{align*}
\dKer{H}{f}{1}(\bz) =& \frac{\abs{\bv}^2}{2} - \int_{\RZ} f(\bar{\bz})~ \newtonian{\bx-\bar{\bx}}~ \intd\bar{\bz}, \\
\dKer{H}{f}{2}(\bz_1,\bz_2) =& -\Gamma(\bx_1-\bx_2).
\end{align*}
In fact, the \textbf{Hamiltonian Vlasov-Poisson functional} is upon integration by parts found to be
\begin{align}
\label{eqn:hamilton-vlasov-poisson-functional}
\HVF(\alpha) \equiv&~ \frac 1 2 \Re\int \bar{\alpha}(\bx,\bv)~\bv\cdot \frac{1}{\mi}\naX\alpha(\bx,\bv)~\intd(\bx,\bv)  \\
\nonumber
&- \frac{1}{2} \Re\int \bar{\alpha}(\bx,\bv)~ \left(\int \nabla \newtonian{\bx-\by}~\abs{\alpha(\by,\bw)}^2~\intd(\by,\bw) \right) \cdot \frac{1}{\mi}\naV\alpha(\bx,\bv)~ \intd(\bx,\bv),
\end{align}
while the corresponding \textbf{Hamiltonian Vlasov-Poisson equation} is easily determined to be
\begin{align}
\label{eqn:hamilton-vlasov-poisson-eqn}
\nonumber
\partial_t \alpha(t,\bx,\bv) =& -\bv\cdot\naX\alpha(t,\bx,\bv)+\left(\int\nabla \newtonian{\bx-\by}\abs{\alpha(t,\by,\bw)}^2~\intd(\by,\bw)\right)\cdot\naV\alpha(t,\bx,\bv) \\
&-\alpha(t,\bx,\bv)\int \bar{\alpha}(t,\by,\bw)~\nabla \newtonian{\bx-\by}\cdot\naW\alpha(t,\by,\bw)~\intd(\by,\bw).
\end{align}
The initial condition is given by $\alpha(0,\cdot)=\mathring{\alpha}$ for some function $\mathring{\alpha}:\RX\times\RV\to\CN$. Formulae \eqref{eqn:hamilton-vlasov-poisson-functional} and \eqref{eqn:hamilton-vlasov-poisson-eqn} exactly coincide with the results in \cite{froehlichknowlesschwarz}. We shorten notation by the

\begin{defn}[Characteristic tuple]
\label{defn:characteristic-tuple}
Let $\alpha: \RX\times\RV\to\CN$ be some function. If all the functions
\begin{align*}
\rho(\bx) \equiv& \int_{\RV} \abs{\alpha(\bx,\bv)}^2~\intd\bv, &&F(\bx) \equiv -\convol{\nabla \Gamma}{\rho}(\bx), \\
\varphi(\bx) \equiv& \int_{\RV} \bar{\alpha}(\bx,\bv)~\naV\alpha(\bx,\bv)~\intd\bv, &&K(\bx) \equiv -\convol{\nabla \Gamma}{\varphi}(\bx)
\end{align*}
are well-defined, then $\alpha$ has a \textbf{characteristic tuple} $(\rho,F,\varphi,K)$. In suggestive manner, $\varphi$ is called \textbf{phase density} and $K$ \textbf{phase force}.
\end{defn}

\noindent \eqref{eqn:hamilton-vlasov-poisson-eqn} now reduces to
\begin{equation*}
\partial_t\alpha(t,\bx,\bv) = -\bv \cdot \naX \alpha(t,\bx,\bv) - F(t,\bx) \cdot \naV\alpha(t,\bx,\bv) + K(t,\bx)~ \alpha(t,\bx,\bv).
\end{equation*}

\subsection{Local existence and well-posedness}

The well-posedness theory will be developed in a Banach space of $\Cp{k}{\bz}$ functions with $\Lp{p}{\bz}$ integrable local supremum of all derivatives up to order $k$, as given in the next Definition. For more properties we refer to Appendix \ref{chap:integrable-local-supremum}.

\begin{defn}
\label{defn:loc-sup-Lp}
Let $f:\RN^d\to\RN$ be some measurable function. For any $p\geq1$ and $\kappa\geq d$ Consider the norm
\begin{equation}
\label{eqn:Ap}
\Apn{f}{\kappa,p}{} \equiv \sup_{R\geq 0}~ (1+R)^{-\frac \kappa p} \left(\int_{\RN^d} \sup_{\abs{\bar{\bz}}\leq R} \abs{f(\bz+\bar{\bz})}^p~\intd\bz\right)^{\frac 1p}.
\end{equation}
$\Ap{\kappa,p}{}$ is the set of measurable functions where this norm is finite. In addition, we want to define
\begin{equation}
\label{eqn:Bp}
\Bp{k,\kappa,p}{} \equiv \left\{f\in\Cm{k}{\RN^d\to\CN}: \forall l\leq k, \abs{\alpha}=l: \Del{\alpha}f \in \Ap{\kappa,p}{}\right\}
\end{equation}
with the norm
\begin{equation*}
\Bpn{f}{k,\kappa,p}{} \equiv \left(\sum_{\abs{\alpha}\leq k} \Apn{\Del{\alpha}f}{\kappa,p}{}^p\right)^{\frac 1p}.
\end{equation*}
\end{defn}

\noindent Throughout the section, $\kappa\geq\dim\RZ = 2\ddim$ is a fixed parameter. The central result of this section is

\begin{thm}[Local existence theorem]
\label{thm:local-existence}
Let $\mathring{\alpha}\in\Bp{1,\kappa,2}{\bz}$ be an initial datum. Then there is a positive time of existence $T=T\left(\Bpn{\mathring\alpha}{1,\kappa,2}{\bz}\right)>0$, such that the Hamiltonian Vlasov-Poisson equation gives rise to a unique solution on $[0,T)$.
\end{thm}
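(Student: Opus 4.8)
The plan is to construct the solution by a Picard-type iteration built on the transport representation of Proposition~\ref{prop:general-transport-formula}, which for \eqref{eqn:hamilton-vlasov-poisson-eqn} reads
\begin{equation*}
\alpha(t,\bz) = \mathring\alpha\bigl(Z(0,t,\bz)\bigr)~\exp\left(\int_0^t K\bigl(\tau,X(\tau,t,\bz)\bigr)~\intd\tau\right),
\end{equation*}
where $Z=(X,V)$ is the solution map of the characteristic system \eqref{eqn:general-characteristic-system}, here $\partial_t X = V$ and $\partial_t V = -F(t,X)$, and $(\rho,F,\varphi,K)$ is the characteristic tuple of Definition~\ref{defn:characteristic-tuple}. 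Concretely, setting $\alpha_0(t,\cdot)\equiv\mathring\alpha$, from $\alpha_n$ I form its characteristic tuple $(\rho_n,F_n,\varphi_n,K_n)$, solve for the flow $Z_n=(X_n,V_n)$, and define $\alpha_{n+1}$ by the displayed formula with $Z,K$ replaced by $Z_n,K_n$. By the $\Leftarrow$ direction of Proposition~\ref{prop:general-transport-formula}, a fixed point of this map is a solution of the equation with $\alpha(0,\cdot)=\mathring\alpha$.

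First I would collect, in the scale $\Bp{k,\kappa,2}{\bz}$, $\Ap{\kappa,p}{}$ of Definition~\ref{defn:loc-sup-Lp} (which embeds into $\Lp{p}{}$ at the scale $R=0$; for further properties see Appendix~\ref{chap:integrable-local-supremum}), the following estimates. (i)~The characteristic-tuple map $\alpha\mapsto(\rho,F,\varphi,K)$ is bounded and Lipschitz on balls of $\Bp{1,\kappa,2}{\bz}$, and in particular $F,K$ are $\Cp{1}{}$ with bounded first derivatives; the gain of a derivative comes from writing $\nabla F = -\convol{\nabla\Gamma}{\nabla\rho}$ and $\nabla K = -\convol{\nabla\Gamma}{\nabla\varphi}$, where $\nabla\rho$ and (after an integration by parts in $\bv$, legitimate in the $\Lp{2}{}$-framework) $\nabla\varphi$ contain only first-order $\bz$-derivatives of $\alpha$, combined with the local integrability and decay of $\nabla\Gamma\sim\abs{\bx}^{1-\ddim}$ for $\ddim\geq 3$. (ii)~For $T$ small, depending only on a bound for $F_n$ and $\nabla F_n$, the flow $Z_n(t,s,\cdot)$ exists on $[0,T]^2$, is a $\Cp{1}{}$-diffeomorphism of $\RZ$, and $Z_n$, $\mathrm DZ_n$ together with their Lipschitz dependence on $F_n$ are controlled. (iii)~Composition with $Z_n(0,t,\cdot)$ and multiplication by the bounded $\Cp{1}{}$ factor $\exp(\int_0^t K_n)$ are bounded operations on $\Bp{1,\kappa,2}{\bz}$. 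Chaining (i)--(iii) yields a radius $\minR\sim\Bpn{\mathring\alpha}{1,\kappa,2}{\bz}$ and a time $T=T\bigl(\Bpn{\mathring\alpha}{1,\kappa,2}{\bz}\bigr)>0$ with $\sup_{t\in[0,T]}\Bpn{\alpha_n(t)}{1,\kappa,2}{\bz}\leq\minR$ for all $n$.

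Next I would prove contraction in the weaker norm $\sup_{t\in[0,T]}\Bpn{\cdot}{0,\kappa,2}{\bz}$: using the uniform bound $\minR$, each of $F_n-F_{n-1}$, $K_n-K_{n-1}$, $Z_n-Z_{n-1}$ and $\exp(\int_0^t K_n)-\exp(\int_0^t K_{n-1})$ is controlled by $C(\minR)\,\sup_{[0,T]}\Bpn{\alpha_n-\alpha_{n-1}}{0,\kappa,2}{\bz}$, so shrinking $T$ to make $C(\minR)\,T\leq\tfrac12$ turns $(\alpha_n)$ into a Cauchy sequence in $\mathcal C\bigl([0,T];\Bp{0,\kappa,2}{\bz}\bigr)$; call its limit $\alpha$. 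The uniform $\Bp{1,\kappa,2}{\bz}$ bound passes to $\alpha$ by lower semicontinuity, and re-running the representation formula with the limiting characteristic tuple upgrades $\alpha$ to a classical solution of the Hamiltonian Vlasov-Poisson equation, continuous in time with values in $\Bp{1,\kappa,2}{\bz}$ because all ingredients of the formula depend continuously on $t$. Uniqueness and continuous dependence on $\mathring\alpha$ follow from the same difference estimates together with Grönwall's inequality, applied now to two solutions rather than two successive iterates.

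The step I expect to be the main obstacle is the self-mapping property underlying (ii)--(iii): the characteristic $X$-component grows linearly in the velocity, $X_n(s,t,\bz)=\bx+\mathcal O\bigl(\abs{\bv}\,(t-s)\bigr)$, so composing $\mathring\alpha$ with $Z_n(0,t,\cdot)$ degrades the polynomially weighted $\Ap{\kappa,2}{}$-norm — this is precisely why one works in the spaces of Definition~\ref{defn:loc-sup-Lp} rather than in weighted $\Lp{p}{}$ or Sobolev spaces, the weight $(1+R)^{-\kappa/p}$ being tailored to absorb this loss over a short time. The delicate bookkeeping is to show that for $T$ small the degradation of the effective weight exponent, together with the growth of $\mathrm DZ_n$, stays within a fixed multiplicative factor independent of $n$, so that the iteration genuinely maps a fixed ball of $\mathcal C\bigl([0,T];\Bp{1,\kappa,2}{\bz}\bigr)$ into itself; once this is in place, the contraction in the weaker norm and the identification of the limit are routine.
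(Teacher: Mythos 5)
Your proposal is sound and follows the same overall skeleton as the paper: iterate the transport representation $\alpha_{n+1}(t,\bz)=\mathring\alpha(Z_n(0,t,\bz))\exp(\int_0^t K_n(\tau,X_n(\tau,t,\bz))\intd\tau)$, obtain uniform a priori bounds in $\Bp{1,\kappa,2}{\bz}$, and close the scheme in the weaker $\Ap{\kappa,2}{\bz}$ topology, with the key observation (which you correctly single out) that the weight $(1+R)^{-\kappa/p}$ is designed exactly to absorb the degradation of the local-sup norm coming from the free-flow composition $\bz\mapsto(\bx-t\bv,\bv)$ in the key bound \eqref{eqn:key-bound}. The differences from the paper are organizational. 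First, the paper runs the iteration of Lemma \ref{lem:local-solution} only for $\mathring\alpha\in\Ccm{1}{\RZ}$ compactly supported, so that well-definedness of $\varphi_n$, the finiteness of the support $R_n(t),P_n(t)$, and the $\Cp{1}{}$ regularity of each iterate are elementary, and then extends to all of $\Bp{1,\kappa,2}{\bz}$ by density (Lemma \ref{lem:compact-dense}, Proposition \ref{prop:initial-datum-approximation}); you propose to work directly on general data, which is feasible (Lemma \ref{lem:local-lipschitz} and the bounds in steps (iv)--(viii) of the paper's proof carry over) but forces you to verify well-definedness and $\Cp{1}{}$ regularity of the iterates without compact support. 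Second, for the key second-derivative bound you write $\naX F=-\convol{\nabla\Gamma}{\naX\rho}$ and reuse the first-order estimate \eqref{eqn:DU-estimate} on $\naX\rho$, whereas the paper uses the log-Lipschitz estimate \eqref{eqn:D2U-simp-estimate} from Lemma \ref{lem:newtonian-ineq}, $\Lpn{\naX F}{\infty}{} \lesssim (1+\Lpn{\rho}{\infty}{})(1+\ln_+\Lpn{\naX\rho}{\infty}{})+\Lpn{\rho}{1}{}$; your route gives an exponential rather than linear Gronwall inequality for $\sup_n\Lpn{\naX F_n(t)}{\infty}{}$, which still closes on a finite interval but yields a worse $T(M)$. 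Third, you use Banach contraction with $C(\minR)T\leq\tfrac12$, while the paper keeps $T=T(M)$ from the a priori bound and gets summability from the factorial $\frac{\globalBound{1+2}{}(t)^n t^n}{n!}$; both are standard, the paper's just avoids shrinking $T$ twice. Finally, the passage from $\Ap{\kappa,2}{\bz}$ convergence to a classical $\Cp{1}{}$ solution is the one step you compress too much: it requires the uniform Cauchy property of $\naX F_n$ and $\naX K_n$ in $\Lp{\infty}{(t,\bx)}$ — the paper establishes this in step (x) via the split estimate \eqref{eqn:D2U-estimate} with a carefully chosen radius $R$ — and it is not automatic from convergence of $F_n,K_n$ alone; you should make that argument explicit.
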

\begin{proof}
See Proposition \ref{prop:time-evolution-operator}. \qedhere
\end{proof}

\begin{rmk}
\textbf{(i).} If for some $\gamma>\ddim$ we have $\sup_{\bz} (1+\abs\bz)^\gamma(\abs{\mathring{\alpha}(\bz)}+\abs{\naZ\mathring{\alpha}(\bz)}) < \infty$, then $\mathring{\alpha}\in\Bp{1,\kappa,2}{\bz}$. Therefore, Theorem \ref{thm:local-existence} applies to a wide class of appropriately decaying data. \\

\noindent\textbf{(ii).} For $\abs{\bz}\to\infty$ we always have $\abs{\mathring{\alpha}(\bz)}\to 0$. This is indeed the case because an unbounded sequence of $(\bz_n)$ with $\abs{\alpha(\bz_n)}\geq \epsilon>0$ would imply an infinite norm. Nevertheless, the explicit decay rate is not prescribed. In particular, there need not exist any $\gamma>0$, s.t. $\sup_\bz\abs{\alpha(\bz)} (1+\abs{\bz})^\gamma < \infty$.

Similarly $\abs{\naZ\mathring{\alpha}(\bz)}\to 0$ for $\abs{\bz}\to\infty$. This forbids oscillations at infinity, even if they have decreasing amplitude. At extremes of the phase space, matter has to be locally \textit{almost} equidistributed. Again, no specific decay rate is implied.\\

\noindent\textbf{(iii).} The phase information leads to a peculiar phase space foliation not observable in the Vlasov picture $f=\abs{\alpha}^2$ and somehow similar to Aharonov-Bohm. For example, if $\{\mathring{\alpha}=0\}$ contains a solid tube like $\mathbb{D}^2\times\RN^{2\ddim-2} = \{\bz: \abs{z_1}^2 + \abs{z_2}^2 \leq 1\}$, it is possible to modify $\mathring{\alpha}$ with phase oscillation around this tube in an obvious manner. This associates some kind of winding number to $\mathring{\alpha}$ that will remain constant under time evolution.
\end{rmk}

\noindent The remainder of the section will be devoted to the proof of Theorem \ref{thm:local-existence}. At first, we specify the class of solutions we are interested in.

\begin{defn}[Solution]
\label{defn:solution}
Let $T>0$ and $\alpha\in\Cm{1}{[0,T)\times\RX\times\RV;\CN}$ be some function. It is called \textbf{admissible} if and only if
\begin{enumerate}[label=(\roman*)]
\item for every $t\in[0,T)$, the characteristic tuple $(\rho(t),F(t),\varphi(t),K(t))$ of $\alpha(t)$ is well-defined,
\item $F,\naX F,K,\naX K\in\Cp{0}{(t,\bx)}$ exist and are continuous as functions of time and space, and
\item $\sup_{\tau\leq t} \Lpn{F(\tau)}{\infty}{\bx}<\infty$ for every $t\in[0,T)$.
\end{enumerate}
An admissible function is called a \textbf{local solution} if it satisfies the Hamiltonian Vlasov-Poisson equation. The solution is called \textbf{global} in the case of $T=\infty$.
\end{defn}

\noindent In preparation of our proof, we need three technical lemmata. We introduce the notation 
\begin{equation*}
\minR(a,b) \equiv \inf_{R>0} \frac{(1+R)^a}{\volS{b}R^b} = \frac{a^a (a-b)^{b-a}}{\volS{b} b^b},
\end{equation*}
$\volS{b}$ the Lebesgue volume of the $b$ dimensional unit ball.

\begin{lem}
Let $\alpha\in\Bp{1,\kappa,2}{\bz}$ be some function. For the characteristic tuple holds
\begin{equation*}
(\rho,F,\varphi,K) \in \left(\Wp{1,1}{\bx}\cap\Cp{1}{\bx}\right) \times \Cp{1,1}{\bx} \times \left(\Wp{1,1}{\bx}\cap\Cp{1}{\bx}\right) \times \Cp{1,1}{\bx}.
\end{equation*}
\end{lem}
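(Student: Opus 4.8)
The plan is to estimate each of the four functions $\rho, F, \varphi, K$ and their first derivatives separately, using the two structural facts available: that $\alpha \in \Bp{1,\kappa,2}{\bz}$ controls local $\Lp{2}{}$-norms of $\alpha$ and $\naZ\alpha$ with polynomial growth in the radius, and that $\rho,\varphi$ are obtained by integrating out the velocity variable while $F,K$ are Newtonian convolutions of $\rho,\varphi$ in the space variable. The fundamental observation for the density terms is that $\rho(\bx) = \int_\RV \abs{\alpha(\bx,\bv)}^2\,\intd\bv$ and $\varphi(\bx) = \int_\RV \bar\alpha(\bx,\bv)\naV\alpha(\bx,\bv)\,\intd\bv$ are, pointwise in $\bx$, integrals of products of two $\Lp{2}{\bv}$-functions, so Cauchy--Schwarz gives $\abs{\rho(\bx)}, \abs{\varphi(\bx)} \leq \norm{\alpha(\bx,\cdot)}_{\Lp{2}{\bv}}\cdot(\text{something in }\Lp{2}{\bv})$; differentiating under the integral sign (justified by the $\Cm{1}$ regularity and the local-sup integrability built into the $\Ap{\kappa,2}{}$ norm) reduces $\naX\rho$ and $\naX\varphi$ to the same type of bilinear expression. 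The upshot is pointwise bounds and, after one more integration in $\bx$ over a ball of radius $R$, local $\Lp{1}{\bx}$ bounds growing like a power of $(1+R)$ — which is exactly membership in $\Cp{1}{\bx}\cap\Wp{1,1}{\bx}$ for $\rho$ and $\varphi$ once one checks the global $\Lp{1}{}$ part separately. Here the constant $\minR(a,b)$ introduced just before the lemma is the device that converts the $(1+R)$-polynomial local bounds into honest $\Lp{1}{}$ and $\Lp{\infty}{}$ bounds by optimizing over $R$.

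Next I would handle $F = -\convol{\nabla\Gamma}{\rho}$ and $K = -\convol{\nabla\Gamma}{\varphi}$. Since $\nabla\Gamma(\bx)$ behaves like $\abs{\bx}^{1-\ddim}$, it is locally integrable in $\ddim\geq 3$ and decays at infinity; the standard split of the convolution integral into the region $\abs{\bx-\by}\leq 1$ (where one uses $\nabla\Gamma\in\Lp{1}{\text{loc}}$ against $\rho\in\Lp{\infty}{}$) and $\abs{\bx-\by}\geq 1$ (where one uses the decay of $\nabla\Gamma$ against $\rho\in\Lp{1}{}$, or against the local-$\Lp{1}{}$-with-growth bound) yields $F, K\in\Lp{\infty}{\bx}$ with the bound uniform in $\bx$ — this is where $\Cp{0}{\bx}$ (indeed $\Cp{0,1}{\bx}$, the bounded case of the local-sup space) comes from; continuity of $F,K$ follows from dominated convergence. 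For the derivatives $\naX F, \naX K$ one should \emph{not} differentiate the kernel (that would produce a singular integral); instead move the derivative onto $\rho$ resp.\ $\varphi$, i.e.\ $\naX F = -\convol{\nabla\Gamma}{\naX\rho}$, and repeat the same split using the already-established bounds on $\naX\rho\in\Lp{\infty}{\bx}\cap\Lp{1}{}$ (resp.\ $\naX\varphi$). This gives $\naX F,\naX K\in\Lp{\infty}{\bx}$ and continuous, hence $F,K\in\Cp{1,1}{\bx}$.

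The main obstacle is bookkeeping rather than conceptual: the $\Ap{\kappa,2}{}$-norm only gives local $\Lp{2}{}$ control of $\alpha$ with a prescribed polynomial growth rate $(1+R)^{\kappa/2}$ in the radius, so I must check at each step that the exponents line up — in particular that the velocity integration producing $\rho,\varphi$ does not destroy integrability at spatial infinity, and that when I feed the growing local-$\Lp{1}{}$ bounds on $\rho,\varphi$ into the long-range part of the Newtonian convolution the decay $\abs{\bx-\by}^{1-\ddim}$ still wins, which requires $\kappa < $ some explicit function of $\ddim$ — or, since $\kappa$ is merely required to be $\geq 2\ddim$ with decay of the datum taking over (cf.\ Remark (i)), using the true decay of $\alpha$ rather than the crude $\Ap{}{}$-bound in that region. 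The other delicate point is justifying differentiation under the integral sign for $\naX\rho,\naX\varphi$: one needs a locally integrable-in-$\bv$ dominating function for $\naX(\bar\alpha\naV\alpha)$ uniformly for $\bx$ in a compact set, which again is exactly what the local-supremum structure of $\Ap{\kappa,2}{}$ (applied to $\alpha$ and $\naZ\alpha$) provides. Once these exponent checks are arranged, each of the eight statements ($\rho,\naX\rho,\varphi,\naX\varphi,F,\naX F,K,\naX K$ in the appropriate spaces) follows from a routine Cauchy--Schwarz-plus-kernel-split estimate, with the $\Wp{1,1}{\bx}$ membership of $\rho,\varphi$ coming from the $R\to\infty$ control and the continuity statements from dominated convergence.
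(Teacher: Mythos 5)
Your overall strategy --- ball--averaging with the $\Ap{\kappa,2}{}$ norm and the $\minR$ optimization to get $\Lp{\infty}{\bx}$ and $\Lp{1}{\bx}$ control of $\rho,\varphi$, then convolution estimates for $F,K$ --- matches the paper's. The paper does not hand-split the Newtonian convolution but instead invokes a pre-packaged potential-theory lemma (Lemma \ref{lem:newtonian-ineq}), though that is only a difference of packaging. There are, however, two genuine gaps in your plan.

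First, for $\naX\varphi$ you say that differentiating under the integral sign ``reduces $\naX\rho$ and $\naX\varphi$ to the same type of bilinear expression.'' That is false for $\varphi$ as stated: naive differentiation of $\varphi(\bx)=\int\bar\alpha\,\naV\alpha$ produces $\int\naX\bar\alpha\,\naV\alpha + \int\bar\alpha\,\naX\naV\alpha$, and the second summand involves a mixed second derivative of $\alpha$, which $\Bp{1,\kappa,2}{\bz}$ does \emph{not} control, so the dominated-convergence argument you sketch cannot be run on it. The paper gets around this by first proving, for $\alpha\in\Ccm{\infty}{\RZ}$, the integrated-by-parts identity $\naX\varphi = 2\mi\Im\int\naX\bar\alpha\,\naV\alpha\,\intd\bv$ (which involves only first derivatives), and then extending the identity by density of $\Ccm{\infty}{}$ in $\Bp{1,\kappa,2}{}$. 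You need that integration by parts plus a density argument; without it the step breaks.

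Second, your conclusion ``$\naX F,\naX K\in\Lp{\infty}{\bx}$ and continuous, hence $F,K\in\Cp{1,1}{\bx}$'' is too weak: bounded continuous first derivative gives $F\in\Cp{1}{\bx}$ with bounded derivative, not $F\in\Cp{1,1}{\bx}$. The $\Cp{1,1}{\bx}$ membership asserted in the lemma is the Lipschitz regularity of $\naX F = -\naX^2 U_\rho$, which does not follow from $\naX\rho\in\Lp{1}{\bx}\cap\Lp{\infty}{\bx}$ alone. The mechanism in the paper is the Hölder estimate \eqref{eqn:D2U-Hoelder-estimate} from Lemma \ref{lem:newtonian-ineq}: since $\rho\in\Cp{1}{\bx}$ with bounded gradient is Lipschitz (i.e.\ $\Cp{0,1}{\bx}$), that estimate with $\alpha=1$ gives $\naX^2 U_\rho$ Lipschitz, hence $F\in\Cp{1,1}{\bx}$. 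Your plan needs to import this singular-integral Hölder regularity argument (representation \eqref{eqn:D2U-formula} and the near-/far-field split exploiting the Lipschitz modulus of $\rho$) --- moving one derivative onto $\rho$ is not enough. Finally, the worry about an upper bound on $\kappa$ in the far-field of the convolution is a non-issue: $\rho$ and $\varphi$ have honest $\Lp{1}{\bx}$ bounds from $\alpha,\naV\alpha\in\Lp{2}{\bz}$, so there is no competition between kernel decay and the $(1+R)^\kappa$ growth rate.
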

\begin{proof}
\textbf{(i) Density $\rho$.} Because $\alpha\in\Bp{1,\kappa,2}{\bz}\subseteq\Ap{\kappa,2}{\bz}\subseteq\Lp{2}{\bz}$, $\rho\in\Lp{1}{\bx}$. On the other hand, because $\alpha\in\Ap{\kappa,2}{\bz}$, for any $R>0$ and $\bx$
\begin{align*}
\abs{\rho(\bx)} 
\leq& \frac{1}{\volS{\ddim}R^\ddim} \int_{\ball{\bx}{R}\subset\RX} \sup_{\abs{\bar{\bx}}\leq R} \abs{\rho(\tilde{\bx}+\bar{\bx})}\intd\tilde{\bx} \\
\leq& \frac{1}{\volS{\ddim}R^\ddim} \int_{\RX}\int_{\RV} \sup_{\abs{(\bar{\bx},\bar{\bv})}\leq R} \abs{\alpha(\tilde{\bx}+\bar{\bx},\tilde{\bv}+\bar{\bv})}^2 \intd(\tilde{\bx},\tilde{\bv}) \\
\leq& \frac{(1+R)^{\kappa}}{\volS{\ddim}R^\ddim} \Apn{\alpha}{\kappa,2}{\bz}^2 
\stackrel{R\text{ opt.}}{=} \minR(\kappa,\ddim) \Apn{\alpha}{\kappa,2}{\bz}^2,
\end{align*}
implying $\rho\in\Lp{\infty}{\bx}$. Because $\naX\alpha\in\Ap{\kappa,2}{\bz}\subseteq\Lp{2}{\bz}$, the natural candidate for the derivative is
\begin{equation*}
\naX \rho(\bx) = 2\Re \int_{\RV} \bar{\alpha}(\bx,\bv)~\naX\alpha(\bx,\bv)~\intd\bv.
\end{equation*}
It is indeed valid, because for any $R>0$ and $\bx$
\begin{align*}
&\int_{\RV} \sup_{\abs{\bar{\bx}}\leq R}\abs{\bar{\alpha}(\bx+\bar{\bx},\bv)}~\abs{\naX\alpha(\bx+\bar{\bx},\bv)}\intd\bv \\
\leq& \frac{1}{\volS{\ddim}R^\ddim}\int_{\RZ} \sup_{\abs{\bar{\bz}}\leq 2R} \abs{\bar{\alpha}(\tilde{\bz}+\bar{\bz})}~\abs{\naX\alpha(\tilde{\bz}+\bar{\bz})}~\intd\tilde{\bz} \\
\leq& \frac{\left(1+2R\right)^{\kappa}} {\volS{\ddim}R^\ddim} \Apn{\abs{\bar{\alpha}}\abs{\naX\alpha}}{\kappa,1}{\bz} 
\stackrel{\text{Hölder}}{\leq} \frac{\left(1+2R\right)^{\kappa}} {\volS{\ddim}R^\ddim} \Apn{\alpha}{\kappa,2}{\bz} \Apn{\naX\alpha}{\kappa,2}{\bz} \\
\stackrel{R\text{ opt.}}{=}& 2^\ddim \minR(\kappa,\ddim) \Apn{\alpha}{\kappa,2}{\bz} \Apn{\naX\alpha}{\kappa,2}{\bz}.
\end{align*}
Hence, $\rho\in\Cp{1}{\bx}$. Finally $\naX\rho\in\Lp{1}{\bx}$ as $\alpha,\naX\alpha\in\Lp{2}{\bz}$, yielding also $\rho\in\Wp{1,1}{\bx}$. \\

\noindent\textbf{(ii) Force $F$.} By (i), the density $\rho$ is in $\Lp{1}{\bx}\cap\Cp{1}{\bx}$. By Lemma \ref{lem:newtonian-ineq}, the force $F=-\convol{\nabla \newtonian{\cdot}}{\rho}$ then is in $\Cp{1,1}{\bx}$. \\

\noindent\textbf{(iii) Phase density $\varphi$.} Because $\alpha,\naV\alpha\in\Lp{2}{\bz}$, $\varphi\in\Lp{1}{\bx}$. In addition, $\alpha,\naV\alpha\in\Ap{\kappa,2}{\bz}$ implies $\varphi\in\Lp{\infty}{\bx}$. If $\alpha\in\Ccm{\infty}{\RZ}$, the derivative is seen to be
\begin{equation*}
\naX\varphi(\bx) = 2\mi\Im\int_{\RV} \naX\bar{\alpha}(\bx,\bv)~\naV\alpha(\bx,\bv)~\intd\bv,
\end{equation*}
with similar computations as in (i). This relation can be extended by a density argument to $\alpha\in\Bp{1,\kappa,2}{\bz}$. By identical arguments as in (i), we find $\varphi\in\Wp{1,1}{\bx}\cap\Cp{1}{\bx}$. \\

\noindent\textbf{(iv) Phase force $K$.} As $\varphi$ is in the same space as $\rho$, all arguments can be copied from (ii). \qedhere
\end{proof}

\begin{lem}
\label{lem:local-lipschitz}
Let $\alpha_1,\alpha_2\in\Bp{1,\kappa,2}{\bz}$ be two functions. If $(\rho_i,F_i,\varphi_i,K_i)$ denote the characteristic tuples, then the following inequalities hold:
\begin{enumerate}[label=(\roman*)]
\item $\Lpn{\rho_1-\rho_2}{\infty}{\bx} \leq A(\kappa,d) \left(\Apn{\alpha_1}{\kappa,2}{\bz}+\Apn{\alpha_2}{\kappa,2}{\bz}\right) \Apn{\alpha_1-\alpha_2}{\kappa,2}{\bz}$,
\item $\Lpn{\rho_1-\rho_2}{1}{\bx} \leq \left(\Apn{\alpha_1}{\kappa,2}{\bz} + \Apn{\alpha_2}{\kappa,2}{\bz}\right) \Apn{\alpha_1-\alpha_2}{\kappa,2}{\bz}$,
\item $\Lpn{F_1-F_2}{\infty}{\bx} \leq \cstNewtonLp{1} \minR(\kappa,\ddim)^{1-\frac 1\ddim} \left(\Apn{\alpha_1}{\kappa,2}{\bz} + \Apn{\alpha_2}{\kappa,2}{\bz}\right) \Apn{\alpha_1-\alpha_2}{\kappa,2}{\bz}$,
\item $\Lpn{\varphi_1-\varphi_2}{\infty}{\bx} \leq \minR\left(\kappa,\ddim\right) \left(\Apn{\naV\alpha_1}{\kappa,2}{\bz} + \Apn{\naV\alpha_2}{\kappa,2}{\bz}\right) \Apn{\alpha_1-\alpha_2}{\kappa,2}{\bz}$,
\item $\Lpn{\varphi_1-\varphi_2}{1}{\bx} \leq \left(\Apn{\naV\alpha_1}{\kappa,2}{\bz} + \Apn{\naV\alpha_2}{\kappa,2}{\bz}\right) \Apn{\alpha_1-\alpha_2}{\kappa,2}{\bz}$,
and
\item $\Lpn{K_1-K_2}{\infty}{\bx} \leq \cstNewtonLp{1} \minR\left(\kappa,\ddim\right)^{1-\frac 1\ddim} \left(\Apn{\naV\alpha_1}{\kappa,2}{\bz} + \Apn{\naV\alpha_2}{\kappa,2}{\bz}\right) \Apn{\alpha_1-\alpha_2}{\kappa,2}{\bz}$.
\end{enumerate}
\end{lem}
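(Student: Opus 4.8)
The plan is to treat the six inequalities in pairs: (i)--(ii) control the density $\rho$, (iii) and (vi) follow from those by the Newtonian estimate of Lemma~\ref{lem:newtonian-ineq}, and (iv)--(v) are a near-verbatim repetition of (i)--(ii) with $\alpha_i$ replaced by $\naV\alpha_i$ inside one of the two factors. The whole proof rests on the single algebraic identity
\begin{equation*}
\alpha_1\bar\alpha_1 - \alpha_2\bar\alpha_2 = (\alpha_1-\alpha_2)\bar\alpha_1 + \alpha_2(\bar\alpha_1-\bar\alpha_2),
\end{equation*}
and its phase-density analogue
\begin{equation*}
\bar\alpha_1\naV\alpha_1 - \bar\alpha_2\naV\alpha_2 = (\bar\alpha_1-\bar\alpha_2)\naV\alpha_1 + \bar\alpha_2\naV(\alpha_1-\alpha_2),
\end{equation*}
so that $\rho_1-\rho_2 = \int_{\RV}\bigl[(\alpha_1-\alpha_2)\bar\alpha_1 + \alpha_2\overline{(\alpha_1-\alpha_2)}\bigr]\intd\bv$ and similarly for $\varphi_1-\varphi_2$. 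From here each term is a pointwise (in $\bx$) $\Lp{1}{\bv}$-pairing of two $\Lp{2}{\bv}$ functions.

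For (ii) I would integrate $\abs{\rho_1-\rho_2}$ over $\bx$, apply the Cauchy--Schwarz inequality in $\bz$ to each of the two product terms, and bound $\Lpn{\cdot}{2}{\bz}\leq\Apn{\cdot}{\kappa,2}{\bz}$ (the embedding used already in the previous lemma, taking $R=0$ in \eqref{eqn:Ap}); this gives the clean constant $1$ in front. For (i) I would repeat verbatim the pointwise argument from part (i) of the previous lemma: cover the ball $\ball{\bx}{R}$, pass to the local supremum, use Hölder in the form $\Apn{\abs{\cdot}\abs{\cdot}}{\kappa,1}{\bz}\leq\Apn{\cdot}{\kappa,2}{\bz}\Apn{\cdot}{\kappa,2}{\bz}$, and optimize over $R$, which produces the factor $\minR(\kappa,\ddim)$; summing the two product terms yields the stated bound with $A(\kappa,d)$ absorbing the constant. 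For (iv)--(v) I replace one $\alpha_i$ factor by $\naV\alpha_i$ throughout, and nothing else changes, so $\Apn{\alpha_i}{\kappa,2}{\bz}$ becomes $\Apn{\naV\alpha_i}{\kappa,2}{\bz}$ in the prefactor; note that in (iv) the term $\bar\alpha_2\naV(\alpha_1-\alpha_2)$ forces $\Apn{\alpha_1-\alpha_2}{\kappa,2}{\bz}$ to be measured on the gradient — but since $\Bp{1,\kappa,2}{\bz}$ controls $\naV(\alpha_1-\alpha_2)$ by $\Apn{\alpha_1-\alpha_2}{1,\kappa,2}{\bz}$, and one checks $\Apn{\naV(\alpha_1-\alpha_2)}{\kappa,2}{\bz}\le\Apn{\alpha_1-\alpha_2}{\kappa,2}{\bz}$ under the convention that the last norm denotes the full $\Bp{}{}$-norm, the statement as written is consistent; I would make this bookkeeping explicit at the start.

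For (iii) and (vi): since $F_1-F_2 = -\convol{\nabla\Gamma}{(\rho_1-\rho_2)}$ and $K_1-K_2 = -\convol{\nabla\Gamma}{(\varphi_1-\varphi_2)}$, I apply Lemma~\ref{lem:newtonian-ineq} to the difference $\rho_1-\rho_2$ (resp. $\varphi_1-\varphi_2$), which estimates the $\Lp{\infty}{\bx}$-norm of such a Riesz-type potential by an interpolation between the $\Lp{1}{\bx}$ and $\Lp{\infty}{\bx}$ norms of the density; the exponent $1-\tfrac1\ddim$ on $\minR(\kappa,\ddim)$ and the constant $\cstNewtonLp{1}$ are exactly what that lemma delivers when fed the bounds (i)+(ii) (resp. (iv)+(v)). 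Concretely, writing $p$ for the density difference, $\Lpn{\convol{\nabla\Gamma}{p}}{\infty}{\bx}\lesssim \Lpn{p}{1}{\bx}^{1/\ddim}\Lpn{p}{\infty}{\bx}^{1-1/\ddim}$, and substituting (i),(ii) makes the two factors $\left(\Apn{\alpha_1}{\kappa,2}{\bz}+\Apn{\alpha_2}{\kappa,2}{\bz}\right)\Apn{\alpha_1-\alpha_2}{\kappa,2}{\bz}$ appear to the first power overall with $\minR(\kappa,\ddim)^{1-1/\ddim}$ out front.

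The only genuine subtlety — and the step I would flag as the main obstacle — is the norm bookkeeping in (iv) and (vi): one must be careful whether $\Apn{\alpha_1-\alpha_2}{\kappa,2}{\bz}$ on the right-hand side is meant to dominate $\Apn{\naV(\alpha_1-\alpha_2)}{\kappa,2}{\bz}$ (which it does not in general as a pure $\Ap{\kappa,2}{}$-norm, only as the $\Bp{1,\kappa,2}{}$-norm), so the clean way to present it is to first record the elementary chain $\Apn{\naZ g}{\kappa,2}{\bz}\le\Bpn{g}{1,\kappa,2}{\bz}$ and then treat every occurrence of the difference norm in (iv)--(vi) as the full $\Bp{}{}$-norm; with that convention fixed, the rest is the bilinear-splitting-plus-Cauchy--Schwarz-plus-optimize-$R$ routine already executed in the preceding lemma, and no new analytic input is required.
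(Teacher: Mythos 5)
Your treatment of (i)--(iii) matches the paper: split $\abs{\alpha_1}^2-\abs{\alpha_2}^2\leq(\abs{\alpha_1}+\abs{\alpha_2})\abs{\alpha_1-\alpha_2}$, take the local supremum over a ball of radius $R$ and optimize to get $\minR(\kappa,\ddim)$ for the $\Lp{\infty}{}$ bound, use Cauchy--Schwarz and $\Lpn{\cdot}{2}{\bz}\leq\Apn{\cdot}{\kappa,2}{\bz}$ for the $\Lp{1}{}$ bound, and feed both into the interpolation inequality \eqref{eqn:DU-estimate}. No issues there.

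The gap is in how you handle (iv)--(vi). You correctly identify that the splitting
\begin{equation*}
\bar\alpha_1\naV\alpha_1-\bar\alpha_2\naV\alpha_2=(\bar\alpha_1-\bar\alpha_2)\naV\alpha_1+\bar\alpha_2\,\naV(\alpha_1-\alpha_2)
\end{equation*}
leaves a term containing $\naV(\alpha_1-\alpha_2)$, which is \emph{not} controlled by $\Apn{\alpha_1-\alpha_2}{\kappa,2}{\bz}$. But your proposed resolution --- reading the right-hand side of (iv)--(vi) as the full $\Bp{1,\kappa,2}{\bz}$-norm of the difference --- changes the statement into a strictly weaker one, and that weaker version is useless downstream: in step (ix) of the proof of Lemma \ref{lem:local-solution}, inequality (vi) is applied to $\alpha_{n+1}-\alpha_n$ with precisely the $\Ap{\kappa,2}{\bz}$-norm on the right, and this is what closes the contraction in $\Lp{\infty}{t}\Ap{\kappa,2}{\bz}$; if the estimate required $\Bpn{\alpha_{n+1}-\alpha_n}{1,\kappa,2}{\bz}$, one would additionally have to prove that the \emph{gradients} of the iterates form a Cauchy sequence, which the scheme does not provide. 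The missing idea is an integration by parts in $\bv$: since $\alpha_2$ and $\alpha_1-\alpha_2$ lie in $\Bp{1,\kappa,2}{\bz}$ (decay at infinity, or argue by density of compactly supported functions via Lemma \ref{lem:compact-dense}),
\begin{equation*}
\int_{\RV}\bar\alpha_2\,\naV(\alpha_1-\alpha_2)\,\intd\bv=-\int_{\RV}\naV\bar\alpha_2\,(\alpha_1-\alpha_2)\,\intd\bv,
\end{equation*}
so the derivative is moved off the difference and one arrives at the pointwise bound $\abs{\varphi_1-\varphi_2}\leq\int_{\RV}\left(\abs{\naV\alpha_1}+\abs{\naV\alpha_2}\right)\abs{\alpha_1-\alpha_2}\,\intd\bv$, after which (iv)--(vi) follow by exactly the same local-supremum/Cauchy--Schwarz/optimization routine as (i)--(iii), with the stated norms. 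This is what the paper does (implicitly, in the step where its displayed chain passes from the two-term splitting to the symmetric integrand); without it your argument does not prove the lemma as stated.
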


\begin{proof}
\textbf{(i), (ii), \& (iii).} For any $\bx$, $R>0$,
\begin{align*}
\abs{\rho_1(\bx)-\rho_2(\bx)} 
\leq& \frac{1}{\volS{\ddim}R^\ddim} \int_{\RZ} \sup_{\abs{\bar{\bz}}\leq R} \left(\abs{\alpha_1(\tilde{\bz}+\bar{\bz})} + \abs{\alpha_1(\tilde{\bz}+\bar{\bz})}\right)\abs{\alpha_1(\tilde{\bz}+\bar{\bz}) - \alpha_2(\tilde{\bz}+\bar{\bz})}\intd\tilde{\bz} \\
\stackrel{R~\text{opt.}}{\leq}& A(\kappa,d) \left(\Apn{\alpha_1}{\kappa,2}{\bz}+\Apn{\alpha_2}{\kappa,2}{\bz}\right) \Apn{\alpha_1-\alpha_2}{\kappa,2}{\bz},
\end{align*}
and upon integration,
\begin{align*}
\Lpn{\rho_1-\rho_2}{1}{\bx} \leq& \left(\Lpn{\alpha_1}{2}{\bz} + \Lpn{\alpha_2}{2}{\bz}\right) \Lpn{\alpha_1-\alpha_2}{2}{\bz} \leq \left(\Apn{\alpha_1}{\kappa,2}{\bz} + \Apn{\alpha_2}{\kappa,2}{\bz}\right) \Apn{\alpha_1-\alpha_2}{\kappa,2}{\bz}.
\end{align*}
In combination with inequality \eqref{eqn:DU-estimate}, one finds
\begin{align*}
\Lpn{F_1-F_2}{\infty}{\bx} 
\leq& \cstNewtonLp{1} \Lpn{\rho_1-\rho_2}{1}{\bx}^{\frac 1\ddim} \Lpn{\rho_1-\rho_2}{\infty}{\bx}^{1-\frac 1\ddim} \\
\leq& \cstNewtonLp{1} \minR(\kappa,\ddim)^{1-\frac 1\ddim} \left(\Apn{\alpha_1}{\kappa,2}{\bz} + \Apn{\alpha_2}{\kappa,2}{\bz}\right) \Apn{\alpha_1-\alpha_2}{\kappa,2}{\bz}.
\end{align*} \\

\noindent\textbf{(iv), (v), \& (vi).} For any $\bx$,
\begin{align*}
&\abs{\varphi_1(\bx)-\varphi_2(\bx)} \\
\leq& \abs{\int_{\RV} \left(\bar{\alpha}_1(\bx,\bv)-\bar{\alpha}_2(\bx,\bv)\right) \naV\alpha_1(\bx,\bv) \intd\bv} \\ &+ \abs{\int_{\RV} \bar{\alpha}_2 \left(\naV\alpha_1(\bx,\bv)-\naV\alpha_2(\bx,\bv)\right)\intd\bv} \\
\leq& \int_{\RV} \left(\abs{\naV\alpha_1(\bx,\bv)} + \abs{\naV\alpha_2(\bx,\bv)}\right) \abs{\alpha_1(\bx,\bv) - \alpha_2(\bx,\bv)}\intd\bv \\
\leq& \frac{1}{\volS{\ddim} R^\ddim} \int_{\RZ} \sup_{\abs{\bar{\bz}}\leq R} \left(\abs{\naV\alpha_1(\bz+\bar{\bz})}+\abs{\naV\alpha_2(\bz+\bar{\bz})}\right) \abs{\alpha_1(\bz+\bar{\bz})-\alpha_2(\bz+\bar{\bz})} \intd\bz \\
\stackrel{R~\text{opt.}}{\leq}&~\minR\left(\kappa,\ddim\right) \left(\Apn{\naV\alpha_1}{\kappa,2}{\bz} + \Apn{\naV\alpha_2}{\kappa,2}{\bz}\right) \Apn{\alpha_1-\alpha_2}{\kappa,2}{\bz}, 
\end{align*}
and upon integration,
\begin{align*}
\Lpn{\varphi_1-\varphi_2}{1}{\bx} 
\leq& \left(\Lpn{\naV\alpha_1}{2}{\bz} + \Lpn{\naV\alpha_2}{2}{\bz}\right) \Lpn{\alpha_1-\alpha_2}{2}{\bz} \\
\leq& \left(\Apn{\naV\alpha_1}{\kappa,2}{\bz} + \Apn{\naV\alpha_2}{\kappa,2}{\bz}\right) \Apn{\alpha_1-\alpha_2}{\kappa,2}{\bz}.
\end{align*}
In total, by inequality \eqref{eqn:DU-estimate},
\begin{align*}
\Lpn{K_1-K_2}{\infty}{\bx} 
\leq& \cstNewtonLp{1} \minR\left(\kappa,\ddim\right)^{1-\frac 1\ddim} \left(\Apn{\naV\alpha_1}{\kappa,2}{\bz} + \Apn{\naV\alpha_2}{\kappa,2}{\bz}\right) \Apn{\alpha_1-\alpha_2}{\kappa,2}{\bz}. \quad\qedhere
\end{align*}
\end{proof}

\begin{lem}
\label{lem:symplectomorphism-difference}
Let $Z=(X,V), \bar{Z}=(\bar{X},\bar{V}): \RN\times\RN\times\RX\times\RV\to\RX\times\RV$ the solution maps of the $\Cp{1}{}$-ODE systems
\begin{align*}
\partial_s X(s,t,\bx,\bv) = V(s,t,\bx,\bv),& \quad \partial_s V(s,t,\bx,\bv) = F(s,X(s,t,\bx,\bv)),\quad\text{and} \\
\partial_s \bar{X}(s,t,\bx,\bv) = \bar{V}(s,t,\bx,\bv),& \quad \partial_s \bar{V}(s,t,\bx,\bv) = \bar{F}(s,\bar{X}(s,t,\bx,\bv)),
\end{align*}
where $F, \bar{F}\in \Cp{0}{t}\Cp{1,1}{\bx}$. Then we find for any $s\leq t$
\begin{equation}
\label{eqn:symplectomorphism-difference}
\Lpn{Z(s,t)-\bar{Z}(s,t)}{\infty}{\bz} \leq \int_{s}^{t} \Lpn{F(\tau)-\bar{F}(\tau)}{\infty}{\bx} \exp\left(\int_{s}^{\tau} \left(1+\Lpn{\naX F(\tilde{\tau})}{\infty}{\bx}\right)\intd\tilde{\tau}\right) \intd\tau.
\end{equation}
\end{lem}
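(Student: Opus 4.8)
The plan is to estimate $\Lpn{Z(s,t)-\bar Z(s,t)}{\infty}{\bz}$ by a Gr\"onwall-type argument applied to the flow equations, integrating backward from the common terminal condition $Z(t,t,\bz)=\bar Z(t,t,\bz)=\bz$. Fix $\bz$ and set $\delta X(\sigma)\equiv X(\sigma,t,\bz)-\bar X(\sigma,t,\bz)$ and $\delta V(\sigma)\equiv V(\sigma,t,\bz)-\bar V(\sigma,t,\bz)$. From the two ODE systems, for $\sigma\le t$,
\begin{align*}
\partial_\sigma\delta X(\sigma) &= \delta V(\sigma), \\
\partial_\sigma\delta V(\sigma) &= F(\sigma,X(\sigma))-\bar F(\sigma,\bar X(\sigma)) = \bigl(F(\sigma,X(\sigma))-F(\sigma,\bar X(\sigma))\bigr) + \bigl(F(\sigma,\bar X(\sigma))-\bar F(\sigma,\bar X(\sigma))\bigr).
\end{align*}
The first bracket is bounded by $\Lpn{\naX F(\sigma)}{\infty}{\bx}\,\abs{\delta X(\sigma)}$ using that $F(\sigma)\in\Cp{1,1}{\bx}$, and the second by $\Lpn{F(\sigma)-\bar F(\sigma)}{\infty}{\bx}$. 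Writing $g(\sigma)\equiv\abs{\delta X(\sigma)}+\abs{\delta V(\sigma)}$, and using $\abs{\delta X}\le g$, $\abs{\delta V}\le g$, one obtains the differential inequality $\abs{\partial_\sigma g(\sigma)}\le \bigl(1+\Lpn{\naX F(\sigma)}{\infty}{\bx}\bigr)g(\sigma)+\Lpn{F(\sigma)-\bar F(\sigma)}{\infty}{\bx}$, valid for $\sigma\in[s,t]$.

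Since $g(t)=0$, integrating this inequality backward from $t$ to $s$ and applying the integral form of Gr\"onwall's lemma gives, for each fixed $\bz$,
\begin{equation*}
g(s) \le \int_s^t \Lpn{F(\tau)-\bar F(\tau)}{\infty}{\bx}\,\exp\left(\int_s^\tau\bigl(1+\Lpn{\naX F(\tilde\tau)}{\infty}{\bx}\bigr)\,\intd\tilde\tau\right)\intd\tau.
\end{equation*}
Because the right-hand side is independent of $\bz$, taking the supremum over $\bz$ and noting $\Lpn{Z(s,t)-\bar Z(s,t)}{\infty}{\bz}\le \sup_\bz g(s)$ (up to the harmless equivalence of the $\ell^1$ and $\ell^2$ norms on $\RN^{2\ddim}$, or by simply defining $\abs{\cdot}$ as the $\ell^1$ norm here) yields \eqref{eqn:symplectomorphism-difference}.

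The only mildly delicate points are bookkeeping ones: one must make sure the Lipschitz constant of $F(\sigma,\cdot)$ is controlled by $\Lpn{\naX F(\sigma)}{\infty}{\bx}$ uniformly in $\sigma$ (immediate from the mean value theorem along the segment joining $\bar X(\sigma)$ and $X(\sigma)$, since $\Cp{1,1}{\bx}\subseteq\Cp{0}{t}\Cp{1,1}{\bx}$ ensures this bound is finite and measurable in $\sigma$), and that the backward Gr\"onwall step is legitimate, i.e. $g$ is absolutely continuous on $[s,t]$ with $g(t)=0$ — which follows from the $\Cp{1}{}$ regularity of the solution maps. I expect the main obstacle, if any, to be purely notational: organizing the split into the ``Lipschitz-in-$F$'' term and the ``difference-of-fields'' term so that the constant $1$ in the exponent (coming from $\abs{\delta V}\le g$ in the $\delta X$ equation) appears cleanly. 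The estimate itself is a textbook comparison argument once that split is in place.
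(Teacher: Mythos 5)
Your proof is correct and takes essentially the same approach as the paper: both split $F(\tau,X)-\bar F(\tau,\bar X)$ into the Lipschitz-in-$x$ term bounded by $\Lpn{\naX F(\tau)}{\infty}{\bx}\abs{\delta X}$ and the field-difference term bounded by $\Lpn{F(\tau)-\bar F(\tau)}{\infty}{\bx}$, then close via Gr\"onwall and take the supremum in $\bz$. The paper works directly with the integral form of the ODEs from $s$ to $t$ rather than phrasing it as a backward differential inequality for $g(\sigma)=\abs{\delta X(\sigma)}+\abs{\delta V(\sigma)}$, but these are equivalent formulations of the same argument.
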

\begin{proof}
Let $s\leq t$ and $\bz$ be arbitrary, then we find
\begin{align*}
&\abs{X(s,t,\bz)-\bar{X}(s,t,\bz)} + \abs{V(s,t,\bz)-\bar{V}(s,t,\bz)} \\
\leq& \int_{s}^{t} \left(\abs{V(\tau,t,\bz)-\bar{V}(\tau,t,\bz)}+\abs{F(\tau,X(\tau,t,\bz))-\bar{F}(\tau,\bar{X}(\tau,t,\bz))}\right)~\intd\tau \\
\leq& \int_{s}^{t} \left(\abs{V(\tau,t,\bz)-\bar{V}(\tau,t,\bz)} + \abs{F(\tau,X(\tau,t,\bz))-F(\tau,\bar{X}(\tau,t,\bz))}\right)~\intd\tau \\
&+ \int_{s}^{t} \Lpn{F(\tau)-\bar{F}(\tau)}{\infty}{\bx}~\intd\tau \\
\leq& \int_{s}^{t} \left(1+\Lpn{\naX F(\tau)}{\infty}{\bx}\right)\left(\abs{X(\tau,t,\bz)-\bar{X}(\tau,t,\bz)}+\abs{V(\tau,t,\bz)-\bar{V}(\tau,t,\bz)}\right)~\intd\tau \\
&+ \int_{s}^{t} \Lpn{F(\tau)-\bar{F}(\tau)}{\infty}{\bx}~\intd\tau,
\end{align*}
yielding a bound by the Gronwall Lemma
\begin{align*}
\abs{Z(s,t,\bz)-\bar{Z}(s,t,\bz)} \leq&\abs{X(s,t,\bz)-\bar{X}(s,t,\bz)} + \abs{V(s,t,\bz)-\bar{V}(s,t,\bz)} \\
\leq& \int_{s}^{t} \Lpn{F(\tau)-\bar{F}(\tau)}{\infty}{\bx} \exp\left(\int_{s}^{\tau}\left(1+\Lpn{\naX F(\tilde{\tau})}{\infty}{\bx}\right)~\intd\tilde{\tau}\right)~\intd\tau. \quad \qedhere
\end{align*}
\end{proof}

\begin{lem}[Transport formula]
\label{lem:transport-formula}
Let $\alpha\in\Cm{1}{[0,T)\times\RX\times\RV;\CN}$ be an admissible function in the sense of Definition \ref{defn:solution}. Equivalent statements are
\begin{enumerate}[label=(\roman*)]
\item $\alpha$ is a local solution of the Hamiltonian Vlasov-Poisson equation.
\item For any $(t,\bz)$ we have
\begin{equation}
\label{eqn:transport-formula}
\alpha(t,\bz) = \alpha(0,Z(0,t,\bz))~\exp\left(\int_{0}^{t} K(\tau,X(\tau,t,\bz))~\intd\tau\right),
\end{equation}
where $Z(s,t,\bz)=(X,V)(s,t,\bz)$ shall be the solution map of the characteristic system
\begin{align*}
\del{}{s} X(s,t,\bx,\bv) = V(s,t,\bx,\bv), \del{}{s} V(s,t,\bx,\bv) = F(s,X(s,t,\bx,\bv))
\end{align*}
with initial condition $Z(t,t,\bx,\bv)=(\bx,\bv)$.
\end{enumerate}
\end{lem}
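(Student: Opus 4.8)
The plan is to prove the equivalence by differentiating the transport formula along characteristics in one direction and integrating the PDE along characteristics in the other, exploiting that the characteristic flow $Z(s,t,\cdot)$ is a well-defined $\Cm{1}{}$ diffeomorphism of $\RZ$ — which is guaranteed by admissibility: condition (ii) gives $F\in\Cp{0}{t}\Cp{1,1}{\bx}$ so the ODE system has global-in-$\bz$ $\Cm{1}{}$ solution maps with the group property $Z(s,r,Z(r,t,\bz))=Z(s,t,\bz)$ and $Z(t,t,\bz)=\bz$, and condition (iii) ($\sup_{\tau\le t}\Lpn{F(\tau)}{\infty}{\bx}<\infty$) keeps trajectories from escaping to infinity in finite time. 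The term $K(\tau,X(\tau,t,\bz))$ appearing in the exponential is a continuous function of $\tau$ by admissibility (ii), so the integral $\int_0^t K(\tau,X(\tau,t,\bz))\,\intd\tau$ is well-defined and $\Cm{1}{}$ in $t$.

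For (i) $\Rightarrow$ (ii): assume $\alpha$ solves the Hamiltonian Vlasov-Poisson equation $\partial_t\alpha = -\bv\cdot\naX\alpha - F\cdot\naV\alpha + K\,\alpha$. Fix $t,\bz$ and consider the map $s\mapsto \alpha(s,Z(s,t,\bz))$ on $[0,t]$. Its total $s$-derivative is $(\partial_s\alpha)(s,Z) + \naX\alpha\cdot\partial_s X + \naV\alpha\cdot\partial_s V = (\partial_s\alpha)(s,Z) + V\cdot\naX\alpha + F\cdot\naV\alpha$, which by the PDE equals $K(s,X(s,t,\bz))\,\alpha(s,Z(s,t,\bz))$. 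This is a scalar linear ODE in $s$ for the quantity $g(s)\equiv\alpha(s,Z(s,t,\bz))$ with continuous coefficient, so $g(t) = g(0)\exp(\int_0^t K(\tau,X(\tau,t,\bz))\,\intd\tau)$; since $Z(t,t,\bz)=\bz$ and $g(0)=\alpha(0,Z(0,t,\bz))$, this is exactly \eqref{eqn:transport-formula}.

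For (ii) $\Rightarrow$ (i): assume \eqref{eqn:transport-formula} holds for all $(t,\bz)$. Replace $\bz$ by $Z(t,0,\bz)$ and use the flow identity $Z(0,t,Z(t,0,\bz))=\bz$ together with $Z(\tau,t,Z(t,0,\bz))=Z(\tau,0,\bz)$ to obtain $\alpha(t,Z(t,0,\bz)) = \alpha(0,\bz)\exp(\int_0^t K(\tau,Z_X(\tau,0,\bz))\,\intd\tau)$, where now the right-hand side is differentiable in $t$ with the left-hand side $\Cm{1}{}$ because $\alpha$ is admissible. Differentiating both sides in $t$: the left gives $(\partial_t\alpha)(t,Z(t,0,\bz)) + V(t,0,\bz)\cdot\naX\alpha + F(t,X(t,0,\bz))\cdot\naV\alpha$ (chain rule, using the ODE for $Z(\cdot,0,\bz)$), while the right gives $K(t,X(t,0,\bz))$ times the whole right-hand side, i.e. $K(t,X(t,0,\bz))\,\alpha(t,Z(t,0,\bz))$. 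Equating and then substituting $\bz\mapsto Z(0,t,\bz)$ (so that $Z(t,0,Z(0,t,\bz))=\bz$) recovers $\partial_t\alpha(t,\bz) = -\bv\cdot\naX\alpha(t,\bz) - F(t,\bx)\cdot\naV\alpha(t,\bz) + K(t,\bx)\,\alpha(t,\bz)$ for every $(t,\bz)$, which is the Hamiltonian Vlasov-Poisson equation in the reduced form stated after Definition \ref{defn:characteristic-tuple}.

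The only genuinely delicate point is justifying that the characteristic flow is globally defined and $\Cm{1}{}$ jointly in $(s,t,\bz)$, so that the chain-rule manipulations and the change of variables $\bz\mapsto Z(t,0,\bz)$ are legitimate; this rests entirely on admissibility (ii) (regularity of $F$, hence $\Cm{1}{}$ dependence of ODE solutions on initial data via the standard theory) and (iii) (uniform $\Lp{\infty}{}$-bound on $F$, hence no finite-time blow-up of trajectories — a velocity grows at most linearly and position at most quadratically on $[0,t]$). Everything else is the elementary observation that along characteristics the PDE collapses to a linear scalar ODE whose unique solution is the exponential integrating factor; both implications are essentially the same computation read in opposite directions, exactly as in the proof of Proposition \ref{prop:general-transport-formula}.
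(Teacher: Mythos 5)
Your proof is correct and takes the same route as the paper: the paper's own proof simply notes that admissibility gives $F\in\Cp{0}{t}\Cp{1}{\bx}$ and hence a $\Cp{1}{}$ solution map $Z$, and then refers to the computation in Proposition \ref{prop:general-transport-formula}, which is exactly the characteristics/integrating-factor argument you carry out in both directions. Your additional remarks on the flow's global definedness and joint $\Cm{1}{}$ regularity make explicit what the paper delegates to "standard theory," but introduce no new ideas.
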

\begin{proof}
As $\alpha$ is admissible, we find $F\in\Cp{0}{t}\Cp{1}{\bx}$ and the solution map $Z$ is $\Cp{1}{}$. The explicit computation can now be copied from the formal proof in Proposition \ref{prop:general-transport-formula} \qedhere
\end{proof}

\begin{lem}[Existence]
\label{lem:local-solution}
Let $M>0$ be some number. Then exists a positive time of existence $T(M)>0$, s.t. every compactly supported initial datum $\mathring\alpha\in\Ccm{1}{\RX\times\RV;\CN}$ with $\Bpn{\mathring\alpha}{1,\kappa,2}{\bz}\leq M$ gives rise to a $\Cp{1}{}$ solution 
\begin{equation*}
\alpha: [0,T(M))\times\RX\times\RV
\end{equation*}
of the Hamiltonian Vlasov-Poisson equation.
\end{lem}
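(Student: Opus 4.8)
The plan is to run a Picard iteration based on the transport formula of Lemma~\ref{lem:transport-formula}. Set $\alpha^{(0)}(t,\bz)\equiv\mathring\alpha(\bz)$; given a $\Cp{1}{}$ iterate $\alpha^{(n)}$ with $\alpha^{(n)}(t)\in\Bp{1,\kappa,2}{\bz}$ for each $t$, form its characteristic tuple $(\rho^{(n)},F^{(n)},\varphi^{(n)},K^{(n)})$ — well-defined and as regular as in the first Lemma of this section — solve the characteristic ODEs $\partial_s X=V$, $\partial_s V=F^{(n)}(s,X)$ with datum $Z^{(n)}(t,t,\bz)=\bz$ (globally solvable, since $F^{(n)}\in\Cp{0}{t}\Cp{1,1}{\bx}$ is globally Lipschitz in $\bx$; the flow $Z^{(n)}(s,t,\cdot)=(X^{(n)},V^{(n)})(s,t,\cdot)$ is a $\Cp{1}{}$ diffeomorphism of $\RZ$, and it is \emph{volume preserving} because the vector field $(\bv,F^{(n)}(s,\bx))$ is divergence free), and define
\[
\alpha^{(n+1)}(t,\bz)\equiv\mathring\alpha\bigl(Z^{(n)}(0,t,\bz)\bigr)\,\exp\!\left(\int_0^t K^{(n)}\bigl(\tau,X^{(n)}(\tau,t,\bz)\bigr)\,\intd\tau\right).
\]
Each $\alpha^{(n+1)}$ is then $\Cp{1}{}$ and compactly supported in $\bz$ for each $t$, since $\supp\alpha^{(n+1)}(t)\subseteq Z^{(n)}(t,0,\cdot)(\supp\mathring\alpha)$.

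The first real task is a \emph{uniform a priori bound}: with $N:=2M$ I will show there is $T_1(M)>0$ such that $\sup_{\tau\le t}\Bpn{\alpha^{(n)}(\tau)}{1,\kappa,2}{\bz}\le N$ forces $\sup_{\tau\le t}\Bpn{\alpha^{(n+1)}(\tau)}{1,\kappa,2}{\bz}\le N$ on every $[0,t]\subset[0,T_1(M))$, so that by induction (base case $\Bpn{\mathring\alpha}{1,\kappa,2}{\bz}\le M\le N$) all iterates are defined and bounded by $N$ on $[0,T_1(M))$. The ingredients are: (a) Lemma~\ref{lem:local-lipschitz} with one argument set to zero, together with Lemma~\ref{lem:newtonian-ineq}, bounds $\Lpn{F^{(n)}(\tau)}{\infty}{\bx}$, $\Lpn{\naX F^{(n)}(\tau)}{\infty}{\bx}$, $\Lpn{K^{(n)}(\tau)}{\infty}{\bx}$, $\Lpn{\naX K^{(n)}(\tau)}{\infty}{\bx}$ by a constant $C(N)$; (b) Gronwall applied to the variational equations (exactly as in the proof of Lemma~\ref{lem:symplectomorphism-difference}) gives $\Lpn{\naZ Z^{(n)}(0,t,\cdot)}{\infty}{\bz}\le e^{t(1+C(N))}$, while the exponential factor and its $\bz$-gradient in the display above are controlled by $e^{tC(N)}$ and $t\,C(N)\,e^{t(1+C(N))}$; (c) the composition estimate $\Apn{g\circ Z^{(n)}(0,t,\cdot)}{\kappa,2}{\bz}\le\Lpn{\naZ Z^{(n)}(0,t,\cdot)}{\infty}{\bz}^{\kappa/2}\Apn{g}{\kappa,2}{\bz}$, which follows from $\abs{Z^{(n)}(0,t,\bz+\bar\bz)-Z^{(n)}(0,t,\bz)}\le\Lpn{\naZ Z^{(n)}}{\infty}{\bz}\,R$ for $\abs{\bar\bz}\le R$ and the change of variables $\bw=Z^{(n)}(0,t,\bz)$, legitimate precisely because the flow preserves Lebesgue measure. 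Differentiating the transport formula by the product and chain rules and feeding (a)--(c) into each resulting term yields $\Bpn{\alpha^{(n+1)}(\tau)}{1,\kappa,2}{\bz}\le P\bigl(e^{\tau C(N)},\tau C(N)\bigr)\,M$ for a polynomial $P$ with $P(1,0)=1$; since $M<N$, choosing $T_1(M)$ small makes the right side $\le N$.

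The second task is \emph{contraction}. On $[0,T_1(M))$ put $D_n(t):=\sup_{\tau\le t}\bigl(\Apn{\alpha^{(n+1)}(\tau)-\alpha^{(n)}(\tau)}{\kappa,2}{\bz}+\Apn{\naZ(\alpha^{(n+1)}(\tau)-\alpha^{(n)}(\tau))}{\kappa,2}{\bz}\bigr)$. Writing $\alpha^{(n+1)}-\alpha^{(n)}$ as a telescoping sum of a flow difference and an exponent difference, and combining Lemma~\ref{lem:local-lipschitz}(iii),(vi) (with its $\naX F,\naX K$ analogues, proved the same way) to bound $\Lpn{F^{(n)}(\tau)-F^{(n-1)}(\tau)}{\infty}{\bx}$, etc., by $C(N)\Apn{\alpha^{(n)}(\tau)-\alpha^{(n-1)}(\tau)}{\kappa,2}{\bz}$, Lemma~\ref{lem:symplectomorphism-difference} to bound $\Lpn{Z^{(n)}(0,t,\cdot)-Z^{(n-1)}(0,t,\cdot)}{\infty}{\bz}$ by $e^{tC(N)}\int_0^t\Lpn{F^{(n)}(\tau)-F^{(n-1)}(\tau)}{\infty}{\bx}\,\intd\tau$, the composition estimate above, and $\Lpn{\naZ\mathring\alpha}{\infty}{\bz}<\infty$, one obtains $D_n(t)\le C(N,M)\int_0^t D_{n-1}(\tau)\,\intd\tau$, whence $D_n(t)\le \tfrac{(C(N,M)\,t)^n}{n!}D_0(T_1)$ is summable. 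Thus $\alpha^{(n)}\to\alpha$ and $\naZ\alpha^{(n)}\to\naZ\alpha$ locally uniformly on $[0,T_1(M))\times\RZ$, so $\alpha$ is $\Cp{1}{}$ with $\Bpn{\alpha(t)}{1,\kappa,2}{\bz}\le N$. Passing to the limit in the defining identity — the tuples and flows converge by Lemmata~\ref{lem:local-lipschitz} and \ref{lem:symplectomorphism-difference} — shows $\alpha$ satisfies the transport formula \eqref{eqn:transport-formula}; $\alpha$ is admissible in the sense of Definition~\ref{defn:solution} (its tuple is well-defined by the first Lemma, $F,\naX F,K,\naX K$ are continuous in $(t,\bx)$, and $\sup_{\tau\le t}\Lpn{F(\tau)}{\infty}{\bx}\le C(N)$), so Lemma~\ref{lem:transport-formula} upgrades it to a genuine $\Cp{1}{}$ solution, and one takes $T(M):=T_1(M)$.

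The main obstacle is closing the a priori bound on the \emph{first derivatives}: this is what forces control of $\Lpn{\naX F^{(n)}}{\infty}{\bx}$ — second derivatives of the Newtonian potential tested against $\rho^{(n)}$, the classically delicate point of Vlasov--Poisson local well-posedness, here absorbed into the estimate of Lemma~\ref{lem:newtonian-ineq} — and, on the other side, the composition inequality $\Apn{g\circ Z^{(n)}(0,t,\cdot)}{\kappa,2}{\bz}\le\Lpn{\naZ Z^{(n)}}{\infty}{\bz}^{\kappa/2}\Apn{g}{\kappa,2}{\bz}$, whose validity rests on the characteristic flow being volume preserving. The remainder is careful bookkeeping of the two competing exponential factors — one from the Gronwall bound on $\naZ Z^{(n)}$, one from the phase factor $\exp\int K^{(n)}$ — arranged so that all multiplicative constants tend to $1$ and all polynomial-in-$t$ terms tend to $0$ as $t\to0$.
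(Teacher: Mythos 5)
Your overall strategy is the paper's: Picard iteration built on the transport formula, a priori bounds in $\Bp{1,\kappa,2}{\bz}$, a contraction, and regularity of the limit. Your composition estimate $\Apn{g\circ Z}{\kappa,2}{\bz}\le\Lpn{\naZ Z}{\infty}{\bz}^{\kappa/2}\Apn{g}{\kappa,2}{\bz}$ (valid by measure preservation and the operator-norm bound $\abs{Z(\bz+\bar\bz)-Z(\bz)}\le\Lpn{\naZ Z}{\infty}{\bz}R$) is a cleaner packaging than the paper's comparison with the free flow $\bar Z$, and the $N=2M$ induction is a legitimate way to organize the a priori bound. That part is sound.

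The contraction step, however, has a genuine gap, and it is exactly the point the paper's proof is structured to avoid. Your functional $D_n$ includes $\Apn{\naZ(\alpha^{(n+1)}-\alpha^{(n)})}{\kappa,2}{\bz}$. Differentiating the transport formula and subtracting, one of the terms you must estimate is
\begin{equation*}
\bigl[(\naZ\mathring\alpha)\circ Z^{(n)}(0,t,\cdot)-(\naZ\mathring\alpha)\circ Z^{(n-1)}(0,t,\cdot)\bigr]\cdot\naZ Z^{(n)}(0,t,\cdot)\, e^{\int K^{(n)}}.
\end{equation*}
To bound this by a constant times $\Lpn{Z^{(n)}-Z^{(n-1)}}{\infty}{\bz}$ (and hence by $\int_0^t D_{n-1}$), you need $\naZ\mathring\alpha$ to admit a Lipschitz modulus, i.e.\ $\mathring\alpha\in\Cp{1,1}{}$ or $\Cp{2}{}$; but $\mathring\alpha$ is only assumed to be in $\Ccm{1}{\RZ;\CN}$. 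The property $\Lpn{\naZ\mathring\alpha}{\infty}{\bz}<\infty$ that you invoke controls the \emph{size}, not the \emph{modulus of continuity}, of $\naZ\mathring\alpha$, so the bound $D_n(t)\le C\int_0^t D_{n-1}(\tau)\,\intd\tau$ does not follow. A secondary but related issue: you claim $\Lpn{\naX F^{(n)}-\naX F^{(n-1)}}{\infty}{\bx}\le C(N)\Apn{\alpha^{(n)}-\alpha^{(n-1)}}{\kappa,2}{\bz}$ as an ``analogue of Lemma~\ref{lem:local-lipschitz} proved the same way''; no such Lipschitz estimate exists for second derivatives of the Newtonian potential — estimate \eqref{eqn:D2U-estimate} necessarily involves $\Lpn{\naX\rho}{\infty}{\bx}$, hence a \emph{gradient} difference of $\alpha$, so the map $\alpha\mapsto\naX F$ is not Lipschitz from $\Ap{\kappa,2}{\bz}$ alone.

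The paper circumvents both problems by contracting \emph{only} in $\Apn{\alpha_{n+1}-\alpha_n}{\kappa,2}{\bz}$: the mean-value step there requires only $\naZ\mathring\alpha$ (first derivatives), and no difference of $\naX F_n$ or $\naX K_n$ is needed, just uniform bounds on them. The $\Cp{1}{}$ regularity of the limit is then recovered separately by a non-quantitative $\epsilon$--$\delta$ argument: using \eqref{eqn:D2U-estimate} with $R$ tuned to $\epsilon$ and the uniform bound $\globalBound{\nabla\rho}{\infty}$, one shows $\naX F_n$ is uniformly Cauchy, hence $F\in\Cp{0}{t}\Cp{1}{\bx}$, hence $Z\in\Cp{1}{}$ by ODE theory, hence $\alpha\in\Cp{1}{}$ from the explicit formula. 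To repair your proof you should drop the $\naZ$ piece from $D_n$, contract on the function norm only, and then establish $\Cp{1}{}$ regularity of the limit by the soft argument above.
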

\begin{proof}
\textbf{(i) Iterative scheme.} The solution is found with an iterative scheme and suitable contractions. At first, we define $\alpha_0(t,\bx,\bv)\equiv\mathring\alpha(\bx,\bv)$ on $\RNp\times\RX\times\RV$. If $\alpha_n$ is defined, it has a characteristic tuple $(\rho_n,F_n,\varphi_n,K_n)$ due to its compact support on any compact time interval. $Z_n(s,t,\bx,\bv)=(X_n,V_n)(s,t,\bx,\bv)$ then shall be the solution map of the characteristic system
\begin{align*}
\del{}{s} X_n(s,t,\bx,\bv) = V_n(s,t,\bx,\bv), \del{}{s} V_n(s,t,\bx,\bv) = F_n(s,X_n(s,t,\bx,\bv))
\end{align*}
with initial condition $Z_n(t,t,\bx,\bv)=(\bx,\bv)$. Finally, the iteration step is completed by defining
\begin{align*}
\alpha_{n+1}(t,\bx,\bv) \equiv \mathring\alpha(Z_n(0,t,\bx,\bv))~ \exp\left(\int_{0}^{t}K_n(\tau,X_n(\tau,t,\bx,\bv))~\intd \tau\right).
\end{align*}
\\

\noindent \textbf{(ii) Well-definedness.} Following instructive ideas similar to \cite[pp.396]{rein}, we want to verify that the iteration scheme is well-defined, $\alpha_n\in\Cm 1 {\RNp\times\RX\times\RV}$, and that for any $t$ $\alpha_n(t)\in\Ccm1{\RX\times\RV}$. For the latter claim we control the size of the support. Therefore, we define
\begin{align*}
R_n(t) \equiv \sup \left\{\abs{\bx} : (\bx,\bv)\in\supp~\alpha_n(t)\right\}, 
P_n(t) \equiv \sup \left\{\abs{\bv} : (\bx,\bv)\in\supp~\alpha_n(t)\right\}
\end{align*}
and claim that they both remain bounded on finite time intervals.

The regularity condition is obviously fulfilled for $\alpha_0$ by construction and choice of the initial value. The size of the support is found to be $R_0(t)=\mathring R, P_0(t)=\mathring P>0$ for the minimal numbers, s.t. $\supp~\mathring\alpha \subseteq \overline{\ball{\bn}{\mathring R}}\times \overline{\ball{\bn}{\mathring P}}\subseteq\RX\times\RV$.

Now assume, the claims to hold for some $n\geq0$. From the iterative scheme it is obvious, that $\rho_n(t)$ has compact support and is continuously differentiable, $F_n(t)$ is continuously differentiable and bounded on $\RX$. $\varphi_n$ is continuously differentiable, because for $\mathcal{C}^2$ functions $\alpha_n(t,\bx,\bv)$ using integration by parts, for any first order differentiation $\partial$ w.r.t. $(t,\bx)$
\begin{align*}
\partial\phi_n(t,\bx) = \int_{\RV} \left(\partial\bar{\alpha}_n(t,\bx,\bv)~\naV\alpha_n(t,\bx,\bv) - \naV\bar{\alpha}_n(t,\bx,\bv)~\partial\alpha_n(t,\bx,\bv)\right)\intd\bv,
\end{align*}
an identity conserved by standard approximation arguments. At last, also $K_n\in\Cp{0}{t}\Cp{1}{\bx}$.

By standard theory on ordinary differential equations, the solution map $Z_n$ then is $\mathcal{C}^1$. Therefore, $\alpha_{n+1}\in \Cm{1}{\RNp\times\RX\times\RV}$ and because $K_n(t,\bx)$ is imaginary, 
\begin{align}
\label{eqn:abs-preservation}
\abs{\alpha(t,\bx,\bv)} = \abs{\mathring\alpha(Z_n(0,t,\bx,\bv))}.
\end{align}
Hence, as $Z_n(s,t,\cdot)$ is a diffeomorphism of $\RX\times\RV$, 
\begin{align}
\nonumber
P_{n+1}(t) =& \sup\left\{\abs\bv : (\bx,\bv)\in\supp~\alpha_n(t)\right\} = \sup\left\{\abs{V_n(t,0,\bx,\bv)} : (\bx,\bv)\in\supp~\mathring\alpha \right\} \\
\nonumber
\leq& \sup \left\{\abs{V_n(0,0,\bx,\bv)} + \int_0^t \abs{F_n(s,X_n(s,0,\bx,\bv))}~\intd s : (\bx,\bv)\in\supp~\mathring\alpha\right\} \\
\nonumber
\leq& \sup \left\{\abs{\bv} + \int_{0}^{t} \Lpn{F_n(s)}{\infty}{\bx}~\intd s : (\bx,\bv)\in\supp~\mathring\alpha\right\} \\
\nonumber
\stackrel{\text{Lem.\ref{lem:newtonian-ineq}}-\eqref{eqn:DU-estimate}}{\leq}&~ \mathring P + \cstNewtonLp{1} \Lpn{\mathring\alpha}{2}{\bz}^{\frac 2\ddim}  \left(\volS{\ddim} \Lpn{\mathring\alpha}{\infty}{\bz}^2\right)^{1-\frac 1\ddim} \int_0^t P_n(s)^{\ddim-1}~\intd s,
\end{align}
and
\begin{align*}
R_{n+1}(t) =& \sup \left\{\abs{\bx} : (\bx,\bv)\in\supp~\alpha_{n+1}(t)\right\} = \sup \left\{\abs{X_n(t,0,\bx,\bv)} : (\bx,\bv)\in\supp~\mathring\alpha\right\} \\
\leq& \sup \left\{\abs{X_n(0,0,\bx,\bv)}+\int_{0}^{t} \abs{V_n(s,0,\bx,\bv)}~\intd s : (\bx,\bv)\in\supp~\mathring\alpha \right\} \\
\leq&~\mathring R + \int_{0}^{t} P_n(s)~\intd s,
\end{align*}
yielding the second claim. The iterative scheme is well-defined and all objects exist. \\

\noindent \textbf{(iii) Preserved $\Lp{p}{}$ norms.} As $Z_n(s,t,\cdot)$ is a symplectomorphism of $\RX\times\RV$, it preserves the Lebesgue measure. By relation \eqref{eqn:abs-preservation}, one finds
\begin{align}
\label{eqn:lp-preservation}
\Lpn{\alpha_n(t)}{p}{\bz} \stackrel{\eqref{eqn:abs-preservation}}{=} \left(\int_{\RX\times\RV}\abs{\mathring\alpha(Z_{n-1}(0,t,\bz))}^p~\intd\bz\right)^{\frac 1p} = \Lpn{\mathring\alpha}{p}{\bz}
\end{align}
and
\begin{align*}
\Lpn{\rho_n(t)}{1}{\bx} = \int_{\RX}\left(\int_{\RV} \abs{\alpha_n(t,\bx,\bv)}^2~\intd\bv\right)~\intd\bx \stackrel{\eqref{eqn:lp-preservation}}{=} \Lpn{\mathring\alpha}{2}{\bz}^2.
\end{align*}

\noindent \textbf{(iv) Time of existence.} We want to prove a contraction scheme in the space $\Lp{\infty}{t}\Ap{\kappa,2}{\bz}$. Therefore, it is necessary to show that all convergents $\alpha_n(t)$ are uniformly bounded in this norm. 

At first, we need to compute the difference of the flow $Z_n$ generated under the external potential of $\alpha_n(t)$ and the free flow $\bar{Z}=(\bar{X},\bar{V})$, solving the ODE system
\begin{equation*}
\del{}s \bar{X}(s,t,\bx,\bv) = \bar{V}(s,t,\bx,\bv), \quad \del{}s \bar{V}(s,t,\bx,\bv) = 0.
\end{equation*}
By Lemma \ref{lem:symplectomorphism-difference}, 
\begin{equation}
\label{eqn:proof-1}
\Lpn{Z_n(s,t)-\bar{Z}(s,t)}{\infty}{\bz} \leq \int_{s}^{t} \Lpn{F_n(\tau)}{\infty}{\bx} e^{\tau-s}\intd\tau.
\end{equation}
This implies for any $t\geq0$
\begin{align}
\nonumber &\Apn{\alpha_{n+1}(t)}{\kappa,2}{\bz} 
= \sup_{R\geq 0} (1+R)^{-\frac{\kappa}{2}} \left(\int_{\RZ} \sup_{\abs{\bar{\bz}}\leq R} \abs{\alpha_{n+1}(t,\bz+\bar{\bz})}^2\intd\bz\right)^{\frac 12} \\
\nonumber =& \sup_{R\geq 0} (1+R)^{-\frac{\kappa}{2}} \left(\int_{\RZ} \sup_{\abs{\bar{\bz}}\leq R} \abs{\mathring\alpha(Z_n(0,t,\bz+\bar{\bz})}^2\intd\bz\right)^{\frac 12} \\
\nonumber \leq& \sup_{R\geq 0} (1+R)^{-\frac{\kappa}{2}} \left(\int_{\RZ} \sup_{\abs{\bar{\bz}}\leq R} \sup_{\abs{\bar{\bar{\bz}}}\leq \Lpn{Z_n(0,t)-\bar{Z}(0,t)}{\infty}{\bz}} \abs{\mathring\alpha(\bar{Z}(0,t,\bz+\bar{\bz})+\bar{\bar{\bz}})}^2\intd\bz\right)^{\frac 12} \\
\nonumber \leq& \sup_{R\geq 0} (1+R)^{-\frac{\kappa}{2}} \left(\int_{\RZ} \sup_{\abs{\bar{\bz}}\leq R} \sup_{\abs{\bar{\bar{\bz}}}\leq \int_{0}^{t} \Lpn{F_n(\tau)}{\infty}{\bx} e^{\tau} \intd\tau} \abs{\mathring\alpha(\bx-t\bv + \bar{\bx}-t\bar{\bv}+\bar{\bar{\bx}},\bv+\bar{\bv}+\bar{\bar{\bv}})}^2\intd\bz\right)^{\frac 12} \\
\nonumber \stackrel{(*)}{\leq}& \sup_{R\geq 0} (1+R)^{-\frac{\kappa}{2}} \left(\int_{\RZ} \sup_{\abs{\bar{\bz}}\leq (1+t)R + \int_{0}^{t} \Lpn{F_n(\tau)}{\infty}{\bx} e^{\tau} \intd\tau} \abs{\mathring\alpha(\bz+\bar{\bz})}^2\intd\bz\right)^{\frac 12} \\
\nonumber \leq& \sup_{R\geq 0} \left(\frac{(1+t)R + \int_{0}^{t} \Lpn{F_n(\tau)}{\infty}{\bx} e^{\tau} \intd\tau}{1+R}\right)^{\frac{\kappa}{2}} \Apn{\mathring\alpha}{\kappa,2}{\bz} \\
\label{eqn:key-bound} \leq& \left(1+t+\int_{0}^{t} \Lpn{F_n(\tau)}{\infty}{\bx} e^{\tau} \intd\tau\right)^{\frac{\kappa}{2}} \Apn{\mathring\alpha}{\kappa,2}{\bz} \\
\nonumber \leq& \left(1+t+\cstNewtonLp{1}\minR(\kappa,\ddim)^{1-\frac{1}{\ddim}} \int_{0}^{t} \Apn{\alpha_n(\tau)}{\kappa,2}{\bz}^2 e^\tau\intd\tau\right)^{\frac{\kappa}{2}} \Apn{\mathring\alpha}{\kappa,2}{\bz},
\end{align}
where at $(*)$ the volume preserving substitution $(\bx,\bv)\mapsto(\bx+t\bv,\bv)$ is applied. Inspired by this integral inequality, we infer that $\bound{\alpha}{\kappa,2}(t)\equiv\sup_n\Apn{\alpha_n(t)}{\kappa,2}{\bz}$ is uniformly bounded by the solution $\globalBound{\alpha}{\kappa,2}(t)$ of the integral equation
\begin{equation*}
\globalBound{\alpha}{\kappa,2}(t) = M \left(1+t+\cstNewtonLp{1} \minR(\kappa,\ddim)^{1-\frac{1}{\ddim}} \int_{0}^{t} \globalBound{\alpha}{\kappa,2}(\tau)^2 e^\tau\intd\tau\right)^{\frac{\kappa}{2}}.
\end{equation*}
The maximal solution of this integral inequality exists on some finite time interval defining $T=T(M)$. We will call this interval $I=I(M)=[0,T(M))$. We remark that all bounds further derived in this proof can be ultimately bounded by some expression in $\globalBound{\alpha}{\kappa,2}$. Therefore, they only depend on the bound $M$ of the initial datum $\mathring\alpha$ and the fundamental parameter $\kappa$. \\

\noindent \textbf{(v) Uniform bounds and Lipschitz bounds on density and force.} For technical reasons, we need to prove various bounds. A starting point is uniform global bounds for $\rho$ and $F$, namely
\begin{align*}
\bound{\rho}{\infty}(t) \equiv& \sup_n \Lpn{\rho_n(t)}{\infty}{\bx} \leq \minR(\kappa,\ddim) \sup_n \Apn{\alpha_n(t)}{\kappa,2}{\bz}^2 \leq \minR(\kappa,\ddim) \globalBound{\alpha}{\kappa,2}(t)^2 \equiv \globalBound{\rho}{\infty}(t), \\
\bound{\rho}{1}(t) \equiv& \sup_n \Lpn{\rho_n(t)}{1}{\bx} \stackrel{Z_{n-1}\text{ sympl.}}{=} \Lpn{\mathring\alpha}{2}{\bz}^2 \leq \Apn{\mathring\alpha}{\kappa,2}{\bz}^2 \leq M^2 \equiv \globalBound{\rho}{1}(t),
\end{align*}
and
\begin{align*}
\bound{F}{\infty}(t) \equiv \sup_n \Lpn{F_n(t)}{\infty}{\bx} \leq \cstNewtonLp{1} \minR(\kappa,\ddim)^{1-\frac 1\ddim} \bound{\alpha}{\kappa,2}(t)^2 \leq \cstNewtonLp{1} \minR(\kappa,\ddim)^{1-\frac 1\ddim} \globalBound{\alpha}{\kappa,2}(t)^2 \equiv \globalBound{F}{\infty}(t).
\end{align*}
Upon applying \eqref{eqn:proof-1}, we find for any $s\leq t$
\begin{align*}
\bound{Z-\bar{Z}}{\infty}(s,t) \equiv& \sup_n \Lpn{Z_n(s,t)-\bar{Z}(s,t)}{\infty}{\bz} \stackrel{\eqref{eqn:proof-1}}{\leq} \int_{s}^{t} \bound{F}{\infty}(\tau) e^{\tau-s} \intd\tau \\
\leq& \int_{s}^{t} \globalBound{F}{\infty}(\tau) e^{\tau-s}\intd\tau \equiv \globalBound{Z-\bar{Z}}{\infty}(s,t).
\end{align*}
Similarly, we want to prove bounds for
\begin{align*}
\bound{\nabla\rho}{\infty}(t) \equiv \sup_n \Lpn{\naX\rho_n(t)}{\infty}{\bx}
\quad\text{and}\quad
\bound{\nabla F}{\infty}(t) \equiv \sup_n \Lpn{\naX F_n(t)}{\infty}{\bx}.
\end{align*}
We have for $R>0$
\begin{align*}
&\abs{\naX\rho_{n+1}(t,\bx)} \\
\leq& \abs{\naX\Lpn{\alpha_{n+1}(t,\bx,\bv)}{2}{\bv}^2} 
= \abs{\naX\Lpn{\mathring\alpha(Z_n(0,t,\bx,\bv))}{2}{\bv}^2} \\
\leq& \int_\RV \abs{(\naZ\abs{\mathring\alpha}^2)(Z_n(0,t,\bx,\bv))}~\abs{\naX Z_n(0,t,\bx,\bv)}~\intd\bv \\
\leq& \frac {1}{\volS{\ddim} R^d} \int_{\RZ} \sup_{\abs{\bar{\bz}}\leq R} \abs{(\naZ\abs{\mathring\alpha}^2)(Z_n(0,t,\bz+\bar{\bz}))} \intd\bz~\Lpn{\naX Z_n(0,t)}{\infty}{\bz} \\
\leq& \frac {1}{\volS{\ddim} R^d} \int_{\RZ} \sup_{\abs{\bar{\bz}}\leq (1+t)R} \sup_{\abs{\bar{\bar{\bz}}}\leq \bound{Z-\bar{Z}}{\infty}(0,t)} \abs{(\naZ\abs{\mathring\alpha}^2)(\bar{Z}(0,t,\bz)+\bar{\bz}+\bar{\bar{\bz}})} \intd\bz~\Lpn{\naX Z_n(0,t)}{\infty}{\bz} \\
=& \frac {1}{\volS{\ddim} R^d} \int_{\RZ} \sup_{\abs{\bar{\bz}}\leq (1+t)R+\bound{Z-\bar{Z}}{\infty}(0,t)} \abs{(\naZ\abs{\mathring\alpha}^2)(\bar{Z}(0,t,\bz)+\bar{\bz})} \intd\bz~\Lpn{\naX Z_n(0,t)}{\infty}{\bz} \\
\leq&~ 2\frac {1}{\volS{\ddim} R^d}~ \left((1+t)R+\bound{Z-\bar{Z}}{\infty}(0,t)\right)^{\kappa}~ \Apn{\naZ\mathring\alpha}{\kappa,2}{\bz} \Apn{\mathring\alpha}{\kappa,2}{\bz} \Lpn{\naX Z_n(0,t)}{\infty}{\bz} \\
\stackrel{R\text{ opt.}}{\leq}&~ 2 \minR\left(\kappa,\ddim\right)~ \left(1+t+\bound{Z-\bar{Z}}{\infty}(0,t)\right)^{\kappa}~ M^2 \Lpn{\naX Z_n(0,t)}{\infty}{\bz}
\end{align*}
and, estimating the equations of variation of $Z_n=(X_n,V_n)$,
\begin{align*}
\del{}{s}\naX X_n(s,t,\bz) = \naX V_n(s,t,\bz), \quad \del{}{s} \naX V_n(s,t,\bz) = \naX F_n(s,X_n(s,t,\bz))\cdot \naX X_n(s,t,\bz),
\end{align*}
we find for any $\bz=(\bx,\bv)\in\RX\times\RV$, $0\leq s\leq t$,
\begin{align*}
&&&\abs{\naX Z_n(s,t,\bz)} \leq \abs{\naX X_n(s,t,\bz)} + \abs{\naX V_n(s,t,\bz)} \\ 
&\leq&& \abs{\id_{\RX}} + \int_{s}^{t} \left(\abs{\naX V_n(\tau,t,\bz)}+\abs{\naX F_n(\tau,X_n(\tau,t,\bz))\cdot\naX X_n(\tau,t,\bz)}\right)~\intd\tau \\
&\leq&&~1 + \int_{s}^{t} \left(1+\Lpn{\naX F_n(\tau)}{\infty}{\bx}\right)~\left(\abs{\naX X_n(\tau,t,\bz)}+\abs{\naX V_n(\tau,t,\bz)}\right)~\intd\tau \\
\stackrel{\text{Gronwall}}{\Rightarrow} &&& \Lpn{\naX Z_n(s,t)}{\infty}{\bz} \leq \exp\left(\int_{s}^{t} \left(1+\Lpn{\naX F_n(\tau)}{\infty}{\bx}\right)~\intd\tau\right),
\end{align*}
and, by an identical argument,
\begin{align*}
\Lpn{\naV Z_n(s,t)}{\infty}{\bz} \leq \exp\left(\int_{s}^{t} \left(1+\Lpn{\naX F_n(\tau)}{\infty}{\bx}\right)~\intd\tau\right).
\end{align*}
Combining the estimates, one has
\begin{align}
\label{eqn:nax-rho-bound}
\Lpn{\naX\rho_{n+1}(t)}{\infty}{\bx} \leq 
\underbrace{2 \minR\left(\kappa,\ddim\right)~ \left(1+t+\bound{Z-\bar{Z}}{\infty}(0,t)\right)^{\kappa}~ M^2}_{\leq 2\minR(\kappa,\ddim) \left(1+t+\globalBound{Z-\bar{Z}}{\infty}(0,t)\right)^\kappa M^2 \equiv H_{M,\kappa}(t)} \exp\left(\int_{0}^{t} \left(1+\Lpn{\naX F_n(\tau)}{\infty}{\bx}\right)~\intd\tau\right).
\end{align}
Application of Lemma \ref{lem:newtonian-ineq} yields
\begin{align*}
&\Lpn{\naX F_{n+1}(t)}{\infty}{\bx} \leq \cstNewtonLn \left[\left(1+\Lpn{\rho_{n+1}(t)}{\infty}{\bx}\right) \left(1+\ln_+\Lpn{\naX\rho_{n+1}(t)}{\infty}{\bx}\right) + \Lpn{\rho_{n+1}(t)}{1}{\bx}\right] \\
\leq& \cstNewtonLn \left[\left(1 + \globalBound{\rho}{\infty}(t)\right) \left(1+\ln_+ H_{M,\kappa}(t) + t+ \int_{0}^{t}\Lpn{\naX F_n(\tau)}{\infty}{\bx}~\intd\tau\right) + \globalBound{\rho}{1}(t)\right].
\end{align*}
By an inductive Gronwall argument, we can find a bound
\begin{align*}
\bound{\nabla F}{\infty}(t) \equiv \sup_n \Lpn{\naX F_n(t)}{\infty}{\bx} \leq& \cstNewtonLn \left[\left(1 + \globalBound{\rho}{\infty}(t)\right) \left(1+\ln_+ H_{M,\kappa}(t) + t\right) + \globalBound{\rho}{1}\right] \\
&\cdot \exp\left(\cstNewtonLn \left(1 + \globalBound{\rho}{\infty}(t)\right)t\right) \equiv \globalBound{\nabla F}{\infty}(t).
\end{align*}
At last, $\bound{\nabla F}{\infty}$ and estimate \eqref{eqn:nax-rho-bound} prove the finiteness of $\bound{\nabla\rho}{\infty}$, i.e.,
\begin{align*}
\bound{\nabla\rho}{\infty}(t) \leq& 2\minR(\kappa,\ddim) \left(1+t+\globalBound{Z-\bar{Z}}{\infty}(0,t)\right)^\kappa M^2 \equiv \globalBound{\nabla\rho}{\infty}(t).
\end{align*}
Finally, we have shown that for any $0\leq s\leq t<T(M)$
\begin{equation}
\bound{\nabla Z}{\infty}(s,t) \equiv \sup_n \Lpn{\naZ Z_n(s,t)}{\infty}{\bz} 
\leq 2 \exp\left(\int_{s}^{t}\left(1+\globalBound{\nabla F}{\infty}(\tau)\right)\intd\tau\right) \equiv \globalBound{\nabla Z}{\infty}(s,t)
\end{equation}
is finite. \\

\noindent \textbf{(vi) Uniform bounds on phase density $\varphi$ and phase force $K$.} We want to prove uniform global bounds $\bound{\varphi}{\infty}, \bound{K}{\infty}$, i.e.,
\begin{align}
\bound{\varphi}{\infty}(t) \equiv \sup_n \Lpn{\varphi_n(t)}{\infty}{\bx} 
\text{~~~and~~~} 
\bound{K}{\infty}(t) \equiv \sup_n \Lpn{K_n(t)}{\infty}{\bx}
\end{align}
remain finite on $I$. We find
\begin{align*}
\varphi_{n+1}(t,\bx) =& \int_{\RV} \bar{\alpha}_{n+1}(t,\bx,\bv)~\naV \alpha_{n+1}(t,\bx,\bv)~\intd\bv \\
=& \int_{\RV} \overline{\mathring\alpha(Z_n(0,t,\bx,\bv))}~ \naV\left(\mathring\alpha(Z_n(0,t,\bx,\bv))\right)~\intd\bv \\
&+ \int_{\RV} \abs{\mathring\alpha(Z_n(0,t,\bx,\bv))}^2~\naV\left(\int_{0}^{t} K_n(s,X_n(s,t,\bx,\bv))~\intd s\right)\intd\bv \\
=& \int_{\RV} \overline{\mathring\alpha(Z_n(0,t,\bx,\bv))}~ \naV\left(\mathring\alpha(Z_n(0,t,\bx,\bv))\right)~\intd\bv \\
&- \int_{\RV} \naV\left(\abs{\mathring\alpha(Z_n(0,t,\bx,\bv))}^2\right)~\left(\int_{0}^{t} K_n(s,X_n(s,t,\bx,\bv))~\intd s\right)\intd\bv.
\end{align*}
Upon estimating the absolute value and multiply using the Hölder inequality, one derives
\begin{align*}
&\abs{\varphi_{n+1}(t,\bx)} 
\leq  \abs{\rho_{n+1}(t,\bx)}^{\frac 12} \Lpn{\naV\left(\mathring\alpha(Z_n(0,t,\bx,\bv))\right)}{2}{\bv} \left(1+2\int_{0}^{t} \Lpn{K_n(\tau)}{\infty}{\bx}~\intd\tau\right) \\
\leq&  \abs{\rho_{n+1}(t,\bx)}^{\frac 12} \Lpn{\left(\naZ\mathring\alpha\right)(Z_n(0,t,\bx,\bv))}{2}{\bv} \Lpn{\naV Z_n(0,t)}{\infty}{\bz} \left(1+2\int_{0}^{t} \Lpn{K_n(\tau)}{\infty}{\bx}~\intd\tau\right) \\
\leq& \bound{\rho}{\infty}(t)^{\frac 12} \left(1+t+\bound{Z-\bar{Z}}{\infty}(0,t)\right)^{\frac {\kappa}2} \minR(\kappa,\ddim)^{\frac 12} \Apn{\naZ\mathring\alpha}{\kappa,2}~\Lpn{\naV Z_n(0,t)}{\infty}{\bz} \left(1+2\int_{0}^{t} \Lpn{K_n(\tau)}{\infty}{\bx}~\intd\tau\right) \\
\leq& \globalBound{\rho}{\infty}(t)^{\frac 12} \left(1+t+\globalBound{Z-\bar{Z}}{\infty}(0,t)\right)^{\frac {\kappa}2} \minR(\kappa,\ddim)^{\frac 12} M~\globalBound{\nabla Z}{\infty}(0,t) \left(1+2\int_{0}^{t} \Lpn{K_n(\tau)}{\infty}{\bx}~\intd\tau\right).
\end{align*}
For the integral norm, one finds
\begin{align*}
&\Lpn{\varphi_{n+1}(t)}{1}{\bx} \\
\leq& \Lpn{\mathring\alpha\circ Z_n(0,t)}{2}{\bz} \Lpn{\left(\naZ\mathring\alpha\right)\circ Z_n(0,t)}{2}{\bz} \Lpn{\naV Z_n(0,t)}{\infty}{\bz} \left(1+2\int_{0}^{t} \Lpn{K_n(\tau)}{\infty}{\bx}~\intd\tau\right) \\
=& \Lpn{\mathring\alpha}{2}{\bz} \Lpn{\naZ\mathring\alpha}{2}{\bz} \Lpn{\naV Z_n(0,t)}{\infty}{\bz} \left(1+2\int_{0}^{t} \Lpn{K_n(\tau)}{\infty}{\bx}~\intd\tau\right) \\
\leq&~ M^2~\globalBound{\nabla Z}{\infty}(0,t) \left(1+2\int_{0}^{t} \Lpn{K_n(\tau)}{\infty}{\bx}~\intd\tau\right).
\end{align*}
As $K_n$ is only a (imaginary valued) convolution, by Lemma \ref{lem:newtonian-ineq},
\begin{align*}
&\Lpn{K_{n+1}(t)}{\infty}{\bx} 
\leq \cstNewtonLp{1} \Lpn{\varphi_{n+1}(t)}{1}{\bx}^{\frac 1\ddim} \Lpn{\varphi_{n+1}(t)}{\infty}{\bx}^{1-\frac 1\ddim} 
\leq \cstNewtonLp{1} M^{1+\frac 1\ddim} \globalBound{\nabla Z}{\infty}(0,t) \\
&\cdot ~\left(\globalBound{\rho}{\infty}(t)^{\frac 12} \left(1+t+\globalBound{Z-\bar{Z}}{\infty}(0,t)\right)^{\frac {\kappa}2} \minR(\kappa,\ddim)^{\frac 12}\right)^{1-\frac 1\ddim}~\left(1+2\int_{0}^{t} \Lpn{K_n(\tau)}{\infty}{\bx}~\intd\tau\right),
\end{align*}
which proves the uniform finiteness of $\bound{K}{\infty}$ on the minimal interval of existence. Now we find by an inductive Gronwall argument,
\begin{align*}
&\sup_n\Lpn{K_{n}(t)}{\infty}{\bx} \\
\leq&~ \cstNewtonLp{1} M^{1+\frac 1\ddim} \globalBound{\nabla Z}{\infty}(0,t)~ \left(\globalBound{\rho}{\infty}(t)^{\frac 12} \left(1+t+\globalBound{Z-\bar{Z}}{\infty}(0,t)\right)^{\frac {\kappa}2} \minR(\kappa,\ddim)^{\frac 12}\right)^{1-\frac 1\ddim} \\
&\cdot \exp\left(2\cstNewtonLp{1} M^{1+\frac 1\ddim} \globalBound{\nabla Z}{\infty}(0,t)
~\left(\globalBound{\rho}{\infty}(t)^{\frac 12} \left(1+t+\globalBound{Z-\bar{Z}}{\infty}(0,t)\right)^{\frac {\kappa}2} \minR(\kappa,\ddim)^{\frac 12}\right)^{1-\frac 1\ddim} t\right) \\
\equiv&~ \globalBound{K}{\infty}(t).
\end{align*}
The estimates on $\varphi_{n+1}$ in turn imply the finiteness of
\begin{align*}
\bound{\varphi}{\infty}(t) \equiv&
\sup_n\Lpn{\varphi_n(t)}{\infty}{\bx} 
\leq \globalBound{\rho}{\infty}(t)^{\frac 12} \left(1+t+\globalBound{Z-\bar{Z}}{\infty}(0,t)\right)^{\frac {\kappa}2} \\ 
&\cdot \minR(\kappa,\ddim)^{\frac 12} M~\globalBound{\nabla Z}{\infty}(0,t) \left(1+2\int_{0}^{t} \globalBound{K}{\infty}(\tau)~\intd\tau\right) \equiv \globalBound{\varphi}{\infty}(t), \\
\bound{\varphi}{1}(t) \equiv& \sup_n \Lpn{\varphi_n(t)}{1}{\bx} \leq M^2~\globalBound{\nabla Z}{\infty}(0,t) \left(1+2\int_{0}^{t} \globalBound{K}{\infty}(\tau)~\intd\tau\right) \equiv \globalBound{\varphi}{1}(t).
\end{align*}
\noindent \textbf{(vii) Uniform Lipschitz bounds on phase density $\varphi$ and phase force $K$.} Finally, we need to show that $\bound{\nabla\varphi}{\infty}(t) \equiv \sup_n\Lpn{\naX\varphi_n(t)}{\infty}{\bx}$ and $\bound{\nabla K}{\infty}(t) \equiv \sup_n\Lpn{\naX K_n(t)}{\infty}{\bx}$ remain finite. Indeed, we compute the matrix valued derivative of $\varphi_{n+1}$
\begin{align*}
\naX\varphi_{n+1}(t,\bx) =&~ 2\mi~\Im \int_{\RV} \naX\bar{\alpha}_{n+1}(t,\bx,\bv)\cdot\naV\alpha_{n+1}(t,\bx,\bv)~\intd\bv \\
=&~2\mi~\Im \int_\RV \left(\naX (\bar{\mathring\alpha}(Z_n(0,t,\bx,\bv))) + \bar{\mathring\alpha}(Z_n(0,t,\bx,\bv)) \phantom{\int} \right. \\ 
&\left.\cdot\int_{0}^{t}(\naX \bar{K_n})(\tau,X_n(\tau,t,\bx,\bv))\cdot (\naX X_n)(\tau,t,\bx,\bv)~\intd\tau\right) \\
&\cdot\left(\naV (\mathring\alpha(Z_n(0,t,\bx,\bv)))
+ \mathring\alpha(Z_n(0,t,\bx,\bv)) \phantom{\int}\right. \\
&\left.\cdot\int_{0}^{t}(\naX K_n)(\tau,X_n(\tau,t,\bx,\bv))\cdot (\naV X_n)(\tau,t,\bx,\bv)~\intd\tau\right)~\intd\bv.
\end{align*}
Estimating the absolute value by means of Hölder and Minkowski yields
\begin{align*}
\abs{\naX\varphi_{n+1}(t,\bx)} 
\leq&~2 \int_{\RV} \left(\abs{\left(\naZ\mathring
\alpha\right)\circ Z_n(0,t,\bx,\bv)} \bound{\nabla Z}{\infty}(0,t)\phantom\int\right. \\
&+ \left.\abs{\mathring\alpha\circ Z_n(0,t,\bx,\bv)} \int_{0}^{t} \Lpn{\naX K_n(\tau)}{\infty}{\bx}~\bound{\nabla Z}{\infty}(\tau,t)~\intd\tau\right)^2~\intd\bv \\
\leq&~2 \frac{(1+R)^{\kappa}}{\volS{\ddim} R^\ddim}~ \left(\frac{(1+t)R+ \bound{Z-\bar{Z}}{\infty}(0,t)}{1+R}\right)^{\kappa}~ \left(\bound{\nabla Z}{\infty}(0,t) \Apn{\naZ\mathring\alpha}{\kappa,2}{\bz}\phantom{\int}\right. \\
&\left.+ \int_{0}^{t} \Lpn{\naX K_n(\tau)}{\infty}{\bx}~\bound{\nabla Z}{\infty}(\tau,t)~\intd\tau~ \Apn{\mathring\alpha}{\kappa,2}{\bz}\right)^2 \\
\stackrel{R\text{ opt.}}{\leq}&~2 \minR(\kappa,\ddim)~M^2~ \left(1+t+\globalBound{Z-\bar{Z}}{\infty}(0,t)\right)^{\kappa}~ \globalBound{\nabla Z}{\infty}(0,t)^2 \\
&\cdot \left(1+\int_{0}^{t} \Lpn{\naX K_n(\tau)}{\infty}{\bx}~\frac{\globalBound{\nabla Z}{\infty}(\tau,t)}{\globalBound{\nabla Z}{\infty}(0,t)}\intd\tau\right)^{2}.
\end{align*}
Taking the $\ln_+$ of this equation and applying Lemma \ref{lem:newtonian-ineq}, we find
\begin{align*}
&\ln_+\Lpn{\naX\varphi_{n+1}(t)}{\infty}{\bx} \\
\leq& \ln_+\left(2 \minR(\kappa,\ddim)~M^2~ \left(1+t+\globalBound{Z-\bar{Z}}{\infty}(0,t)\right)^{\kappa} \globalBound{\nabla Z}{\infty}(0,t)^2\right)~ \\
&+ 2 \ln_+\left(1+\int_{0}^{t} \Lpn{\naX K_n(\tau)}{\infty}{\bx}~\underbrace{\frac{\globalBound{\nabla Z}{\infty}(\tau,t)}{\globalBound{\nabla Z}{\infty}(0,t)}}_{\leq 1}\intd\tau\right) \\
\leq& \ln_+\left(2 \minR(\kappa,\ddim)~M^2~ \left(1+t+\globalBound{Z-\bar{Z}}{\infty}(0,t)\right)^{\kappa} \globalBound{\nabla Z}{\infty}(0,t)^2\right) \\
&+ 2\int_{0}^{t} \cstNewtonLn \left[\left(1+\globalBound{\varphi}{\infty}(\tau)\right) \left(1+\ln_+\Lpn{\naX\varphi_n(\tau)}{\infty}{\bx}\right)+\globalBound{\varphi}{1}(\tau)\right]\intd\tau.
\end{align*}
An inductive Gronwall argument proves the finiteness of $\sup_n\ln_+\Lpn{\naX\varphi_n(t)}{\infty}{\bx}$, i.e.,
\begin{align*}
&\bound{\nabla\varphi}{\infty}(t) \\
\leq&~ \exp\left(\sup_n \ln_+ \Lpn{\naX\varphi_n(t)}{\infty}{\bx}\right) \\ 
\leq& \exp\left(\left(1 + \ln_+\left(2 \minR(\kappa,\ddim)~M^2~ \left(1+t+\globalBound{Z-\bar{Z}}{\infty}(0,t)\right)^{\kappa} \globalBound{\nabla Z}{\infty}(0,t)^2\right)+2\cstNewtonLn t\globalBound{\varphi}{1}(t)\right)\right. \\
&\left.\cdot\exp\left(\int_{0}^{t} \left(1+\globalBound{\varphi}{\infty}(\tau)\right)\intd\tau\right)-1\right) \equiv \globalBound{\nabla\varphi}{\infty}(t).
\end{align*}
Hence, eqn. \eqref{eqn:D2U-simp-estimate} ensures the finiteness of
\begin{align*}
\bound{\nabla K}{\infty}(t) \leq& \cstNewtonLn\left[\left(1+\globalBound{\varphi}{\infty}(t)\right) \left(1+\ln_+\globalBound{\nabla\varphi}{\infty}\right) + \globalBound{\varphi}{1}(t)\right] \equiv \globalBound{\nabla K}{\infty}(t).
\end{align*} \\

\noindent\textbf{(viii) Uniform supremum and integral bounds on $\naZ\alpha_n$.} For any $(t,\bx,\bv)$ and $n\in\NN_0$ we find
\begin{align*}
&\abs{\naZ\alpha_{n+1}(t,\bx,\bv)} \\
\leq& \abs{(\naZ\mathring\alpha)(Z_n(0,t,\bx,\bv))}\abs{\naZ Z_n(0,t,\bx,\bv)} \\
&+ \abs{\mathring\alpha(Z_n(0,t,\bx,\bv))} \int_{0}^{t} \abs{\naX K_n(\tau,X(\tau,t,\bx,\bv))}\abs{\naZ X_n(\tau,t,\bx,\bv)}\intd\tau \\
\leq& \abs{\left(\naZ\mathring\alpha\right)(Z_n(0,t,\bx,\bv)} \bound{\nabla Z}{\infty}(0,t) + \abs{\mathring\alpha(Z_n(0,t,\bx,\bv))} \int_{0}^{t} \bound{\nabla K}{\infty}(\tau) \bound{\nabla Z}{\infty}(\tau,t)~\intd\tau,
\end{align*}
which yields for the $\Ap{\kappa,2}{\bz}$-norm
\begin{align*}
\bound{\nabla \alpha}{\kappa,2}(t) 
\equiv& \sup_n \Apn{\naZ\alpha_n(t)}{\kappa,2}{\bz} \\
\leq&  \left(1+t+\bound{Z-\bar{Z}}{\infty}(0,t)\right)^{\frac{\kappa}{2}} \left(\Apn{\naZ\mathring\alpha}{\kappa,2}{\bz} \bound{\nabla Z}{\infty}(0,t) + \Apn{\mathring\alpha}{\kappa,2}{\bz} \int_{0}^{t} \bound{\nabla K}{\infty}(\tau) \bound{\nabla Z}{\infty}(\tau,t)~\intd\tau\right) \\
\leq& \left(1+t+\globalBound{Z-\bar{Z}}{\infty}(0,t)\right)^{\frac \kappa 2} M\left(\globalBound{\nabla Z}{\infty}(0,t) + \int_{0}^{t} \globalBound{\nabla K}{\infty}(\tau) \globalBound{\nabla Z}{\infty}(\tau,t)\intd\tau\right) \\
\equiv& \globalBound{\nabla\alpha}{\kappa,2}(t)
\end{align*}
proving the finiteness on the interval $I$. This proves that $\bound{\alpha}{1,\kappa,2}(t) \equiv \sup_n \Bpn{\mathring\alpha}{1,\kappa,2}{\bz}<\infty$ in $I$ and the curve of each convergent remains in $\Bp{1,\kappa,2}{\bz}$.
\\

\noindent\textbf{(ix) Difference estimates.} We now want to show that $\alpha_n$ converges to some limit $\alpha$ in $\Lp{\infty}{t}\Ap{\kappa,2}{\bz}$ norm. At first, we estimate the difference $\alpha_{n+1}(t)-\alpha_n(t)$ of convergents at $\bz\in\RZ$
\begin{align*}
&\abs{\alpha_{n+2}(t,\bz)-\alpha_{n+1}(t,\bz)} \\
\leq& \abs{\mathring\alpha(Z_{n+1}(0,t,\bz))-\mathring\alpha(Z_n(0,t,\bz))} \abs{\exp\left(\int_{0}^{t} K_{n+1}(\tau,X_{n+1}(\tau,t,\bz))~\intd\tau\right)} \\
&+ \abs{\mathring\alpha(Z_n(0,t,\bz))} \abs{\exp\left(\int_{0}^{t} K_{n+1}(\tau,X_{n+1}(\tau,t,\bz))~\intd\tau\right)-\exp\left(\int_{0}^{t} K_n(\tau,X_n(\tau,t,\bz))~\intd\tau\right)} \\
\leq& \int_{0}^{1} \abs{\left(\naZ\mathring\alpha\right)((1-s) Z_n(0,t,\bz) + s Z_{n+1}(0,t,\bz))}~\intd s \abs{Z_{n+1}(0,t,\bz)-Z_n(0,t,\bz)} \\
&+ \abs{\mathring\alpha(Z_n(0,t,\bz))} \abs{\int_{0}^{t} \left(K_{n+1}(\tau,X_{n+1}(\tau,t,\bz))-K_n(\tau,X_{n+1}(\tau,t,\bz))\right)~\intd\tau} \\
&+ \abs{\mathring\alpha(Z_n(0,t,\bz))} \abs{\int_{0}^{t} \left(K_{n}(\tau,X_{n+1}(\tau,t,\bz))-K_n(\tau,X_{n}(\tau,t,\bz))\right)~\intd\tau} \\
\leq& \int_{0}^{1} \abs{\left(\naZ\mathring\alpha\right)((1-s) Z_n(0,t,\bz) + s Z_{n+1}(0,t,\bz))}~\intd s \Lpn{Z_{n+1}(0,t)-Z_n(0,t)}{\infty}{\bz}\\
&+ \abs{\mathring\alpha(Z_n(0,t,\bz))} \int_{0}^{t} \Lpn{K_{n+1}(\tau)-K_n(\tau)}{\infty}{\bx}\intd\tau \\
&+ \abs{\mathring\alpha(Z_n(0,t,\bz))} \int_{0}^{t} \Lpn{\naX K_n(\tau)}{\infty}{\bx} \Lpn{X_{n+1}(\tau,t)-X_n(\tau,t)}{\infty}{\bz}\intd\tau.
\end{align*}
Now we can treat all the terms differently. Primarily, we want to remark that by Lemma \ref{lem:symplectomorphism-difference}
\begin{align*}
&\sup_n \Lpn{(1-s)(Z_{n}(0,t)-\bar{Z}(0,t)) + s(Z_{n+1}(0,t)-\bar{Z}(0,t))}{\infty}{\bz} \\
\leq&~ (1-s) \sup_n \Lpn{Z_n(0,t)-\bar{Z}(0,t)}{\infty}{\bz} + s \sup_n \Lpn{Z_{n+1}(0,t)-\bar{Z}(0,t)}{\infty}{\bz} \\ 
\leq& \sup_{n} \Lpn{Z_n(0,t)-\bar{Z}(0,t)}{\infty}{\bz} = \bound{Z-\bar{Z}}{\infty}(0,t) \leq \globalBound{Z-\bar{Z}}{\infty}(0,t),
\end{align*}
yielding for any $R>0$
\begin{align*}
& \int_{\RZ} \sup_{\abs{\bar{\bz}}\leq R} \left(\int_{0}^{1} \abs{\left(\naZ\mathring\alpha\right)(s Z_n(0,t,\bz+\bar{\bz}) + (1-s) Z_{n+1}(0,t,\bz+\bar{\bz}))}~\intd s\right)^2 \intd\bz \\
\leq& \int_{\RZ} \sup_{\abs{\bar{\bz}}\leq R} \left(\sup_{\abs{\bar{\bar{\bz}}}\leq \globalBound{Z-\bar{Z}}{\infty}(0,t)} \abs{\left(\naZ\mathring\alpha\right)(\bar{Z}(0,t,\bz+\bar{\bz}) + \bar{\bar{\bz}})}\right)^2 \intd\bz \\
\leq& \int_{\RZ} \sup_{\abs{\bar{\bz}}\leq (1+t)R + \globalBound{Z-\bar{Z}}{\infty}(0,t)} \abs{\left(\naZ\mathring\alpha\right)(\bar{Z}(0,t,\bz)+\bar{\bz})}^2 \intd\bz \\
=& \int_{\RZ} \sup_{\abs{\bar{\bz}}\leq (1+t)R + \globalBound{Z-\bar{Z}}{\infty}(0,t)} \abs{\left(\naZ\mathring\alpha\right)(\bz+\bar{\bz})}^2 \intd\bz.
\end{align*}
Dividing by $(1+R)^{\kappa}$ and applying the supremum and the square root, we find
\begin{align*}
& \sup_{R\geq 0} \left(1+R\right)^{-\frac{\kappa}{2}} \\ &\cdot \left(\int_{\RZ} \sup_{\abs{\bar{\bz}}\leq R} \left(\int_{0}^{1} \abs{\left(\naZ\mathring\alpha\right)(s Z_n(0,t,\bz+\bar{\bz}) + (1-s) Z_{n+1}(0,t,\bz+\bar{\bz}))}~\intd s\right)^2 \intd\bz\right)^{\frac 12} \\
\leq& \left(1+t+\globalBound{Z-\bar{Z}}{\infty}(0,t)\right)^{\frac{\kappa}{2}} \Apn{\naZ\mathring\alpha}{\kappa,2}{\bz}.
\end{align*}
For the second and third term, we find similarly
\begin{align*}
\sup_{R\geq 0} \left(1+R\right)^{-\frac \kappa 2} \left(\int_{\RZ} \sup_{\abs{\bar{\bz}}\leq R} \abs{\mathring\alpha(Z_n(0,t,\bz+\bar{\bz}))}^2 \intd\bz\right)^{\frac 12} \leq& \left(1+t+\globalBound{Z-\bar{Z}}{\infty}(0,t)\right)^{\frac \kappa 2} \Apn{\mathring\alpha}{\kappa,2}{\bz}.
\end{align*}
Upon combining these estimates with the Lipschitz estimates of Lemma \ref{lem:local-lipschitz}, one derives
\begin{align*}
&\left(1+t+\globalBound{Z-\bar{Z}}{\infty}(0,t)\right)^{-\frac \kappa 2} \Apn{\alpha_{n+2}(t)-\alpha_{n+1}(t)}{\kappa,2}{\bz} \\
\leq& \Apn{\naZ\mathring\alpha}{\kappa,2}{\bz} \Lpn{Z_{n+1}(0,t)-Z_n(0,t)}{\infty}{\bz} + \Apn{\mathring\alpha}{\kappa,2}{\bz} \int_{0}^{t} \Lpn{K_{n+1}(\tau)-K_n(\tau)}{\infty}{\bx} \intd\tau \\
&+ \Apn{\mathring\alpha}{\kappa,2}{\bz} \int_{0}^{t} \Lpn{\naX K_n(\tau)}{\infty}{\bx} \Lpn{X_{n+1}(\tau,t)-X_n(\tau,t)}{\infty}{\bz} \intd\tau \\
\leq& \Apn{\naZ\mathring\alpha}{\kappa,2}{\bz} \int_{0}^{t} \Lpn{F_{n+1}(\tau)-F_n(\tau)}{\infty}{\bx} \exp\left(\int_{0}^{\tau} \left(1+\Lpn{\naX F_n(\tilde{\tau})}{\infty}{\bx}\right)\intd\tilde{\tau}\right) \intd\tau \\
&+ \Apn{\mathring\alpha}{\kappa,2}{\bz} \int_{0}^{t} \Lpn{K_{n+1}(\tau)-K_n(\tau)}{\infty}{\bx} \intd\tau \\
&+ \Apn{\mathring\alpha}{\kappa,2}{\bz} \int_{0}^{t} \Lpn{\naX K_n(\tau)}{\infty}{\bx} \int_{\tau}^{t} \Lpn{F_{n+1}(\tilde{\tau})-F_n(\tilde{\tau})}{\infty}{\bx} \\
&\quad\cdot \exp\left(\int_{\tau}^{\tilde{\tau}} \left(1+\Lpn{\naX F_n(\tilde{\tilde{\tau}})}{\infty}{\bx}\right)\intd\tilde{\tilde{\tau}}\right) \intd\tilde{\tau} \intd\tau \\
\leq& \Apn{\naZ\mathring\alpha}{\kappa,2}{\bz} \int_{0}^{t} \Lpn{F_{n+1}(\tau)-F_n(\tau)}{\infty}{\bx} \exp\left(\int_{0}^{\tau} \left(1+\globalBound{\nabla F}{\infty}(\tilde{\tau})\right)\intd\tilde{\tau}\right) \intd\tau \\
&+ \Apn{\mathring\alpha}{\kappa,2}{\bz} \int_{0}^{t} \Lpn{K_{n+1}(\tau)-K_n(\tau)}{\infty}{\bx} \intd\tau \\
&+ \Apn{\mathring\alpha}{\kappa,2}{\bz} \int_{0}^{t} \Lpn{F_{n+1}(\tau)-F_n(\tau)}{\infty}{\bx} \\
&\quad\cdot\left(\int_{0}^{\tau} \globalBound{\nabla K}{\infty}(\tilde{\tau}) \exp\left(\int_{\tilde{\tau}}^{\tau} \left(1+\globalBound{\nabla F}{\infty}(\tilde{\tilde{\tau}})\right)\intd\tilde{\tilde{\tau}}\right) \intd\tilde{\tau}\right) \intd\tau \\
\leq&~ M \int_{0}^{t} \Lpn{K_{n+1}(\tau)-K_n(\tau)}{\infty}{\bx} \intd\tau \\
&+ M\int_{0}^{t} \Lpn{F_{n+1}(\tau)-F_n(\tau)}{\infty}{\bx} \left(\exp\left(\int_{0}^{\tau} \left(1+\globalBound{\nabla F}{\infty}(\tilde{\tau})\right)\intd\tilde{\tau}\right)\right. \\
&\left.+  \int_{0}^{\tau} \globalBound{\nabla K}{\infty}(\tilde{\tau}) \exp\left(\int_{\tilde{\tau}}^{\tau} \left(1+\globalBound{\nabla F}{\infty}(\tilde{\tilde{\tau}})\right)\intd\tilde{\tilde{\tau}}\right) \intd\tilde{\tau}\right) \intd\tau\\
\leq&~ \cstNewtonLp{1} \minR\left(\kappa,\ddim\right)^{1-\frac 1\ddim} M \int_{0}^{t} \left(\Apn{\naV\alpha_n(\tau)}{\kappa,2}{\bz} + \Apn{\naV\alpha_{n+1}(\tau)}{\kappa,2}{\bz}\right) \Apn{\alpha_{n+1}(\tau)-\alpha_n(\tau)}{\kappa,2}{\bz} \intd\tau \\
&+ \cstNewtonLp{1} \minR(\kappa,\ddim)^{1-\frac 1\ddim} M \int_{0}^{t} \left(\Apn{\alpha_n(\tau)}{\kappa,2}{\bz} + \Apn{\alpha_{n+1}(\tau)}{\kappa,2}{\bz}\right) \Apn{\alpha_{n+1}(\tau)-\alpha_n(\tau)}{\kappa,2}{\bz} \\
&\quad\cdot \left(\exp\left(\int_{0}^{\tau} \left(1+\globalBound{\nabla F}{\infty}(\tilde{\tau})\right)\intd\tilde{\tau}\right) \right. \\
&\left.\quad\quad + \int_{0}^{\tau} \globalBound{\nabla K}{\infty}(\tilde{\tau}) \exp\left(\int_{\tilde{\tau}}^{\tau} \left(1+\globalBound{\nabla F}{\infty}(\tilde{\tilde{\tau}})\right)\intd\tilde{\tilde{\tau}}\right) \intd\tilde{\tau}\right) \intd\tau \\
\leq&~ \globalBound{1}{}(t) \int_{0}^{t} \globalBound{2}{}(\tau) \Apn{\alpha_{n+1}(\tau)-\alpha_n(\tau)}{\kappa,2}{\bz} \intd\tau,
\end{align*}
where
\begin{align*}
\globalBound{1}{}(t) \equiv& \left(1+t+\globalBound{Z-\bar{Z}}{\infty}(0,t)\right)^{\frac \kappa 2}, \\
\globalBound{2}{}(\tau) \equiv&~ 2\cstNewtonLp{1} \minR(\kappa,\ddim)^{1-\frac 1\ddim} M \left[\globalBound{\nabla\alpha}{\kappa,2}(\tau) + \globalBound{\alpha}{\kappa,2}(\tau) \left(\exp\left(\int_{0}^{\tau} \left(1+\globalBound{\nabla F}{\infty}(\tilde{\tau})\right)\intd\tilde{\tau}\right)\right.\right. \\
&\left.\left.+  \int_{0}^{\tau} \globalBound{\nabla K}{\infty}(\tilde{\tau}) \exp\left(\int_{\tilde{\tau}}^{\tau} \left(1+\globalBound{\nabla F}{\infty}(\tilde{\tilde{\tau}})\right)\intd\tilde{\tilde{\tau}}\right) \intd\tilde{\tau}\right)\right].
\end{align*}
Upon estimating $\globalBound{2}{}(\tau)\leq \globalBound{2}{}(t)$ for $\tau\leq t$, we even deduce for $\globalBound{1+2}{}(t) \equiv \globalBound{1}{}(t)~ \globalBound{2}{}(t)$
\begin{align*}
\Apn{\alpha_{n+2}(t)-\alpha_{n+1}(t)}{\kappa,2}{\bz} \leq& \globalBound{1+2}{}(t) \int_{0}^{t} \Apn{\alpha_{n+1}(\tau)-\alpha_n(\tau)}{\kappa,2}{\bz} \intd\tau.
\end{align*}
By induction, one immediately verifies that
\begin{align*}
\sup_{\tau\in[0,t]} \Apn{\alpha_{n+1}(\tau)-\alpha_n(\tau)}{\kappa,2}{\bz} 
\leq&~ 2 \sup_{\tau\in[0,t]}\bound{\alpha}{\kappa,2}(\tau) \frac{\globalBound{1+2}{}(t)^n t^n}{n!} \leq 2\globalBound{\alpha}{\kappa,2}(t) \frac{\globalBound{1+2}{}(t)^n t^n}{n!}.
\end{align*}
As the right-hand side is summable in $n$ for any $t<T(M)$, the sequence $\alpha_n$ converges uniformly on $[0,t]\times\RX\times\RV$ to some limit $\alpha\in\Cm{0}{I\times\RX\times\RV\to\CN}$ and $\alpha \equiv \lim_{n\to\infty}\alpha_n \in\Lp{\infty}{t}\Ap{\kappa,2}{\bz}$. \\

\noindent\textbf{(x) Regularity.} Using Lemmata \ref{lem:local-lipschitz} and \ref{lem:symplectomorphism-difference}, we can estimate the Cauchy property of all the sequences $Z_n, \rho_n, F_n, \varphi_n, K_n$ uniformly on the set $[0,t]\times\RZ$ for any $t\in I$. In addition, $\rho_n$ and $\varphi_n$ are Cauchy sequences in the $\Lp{1}{\bx}$ norm. As the space of continuous functions with the $\Lp{\infty}{}$ norm is complete, we find the continuous limits $Z, \rho, F, \varphi, K$. We need to show that these limits are continuously differentiable. This is achieved by proving the uniform Cauchy property of the first derivatives. 

We start by examining $F_n$. Let $0\leq\tau\leq t<T$ and $\epsilon>0$ be arbitrary. For $R=\epsilon/\left(4\cstNewtonLn\globalBound{\nabla\rho}{\infty}(t)\right)$ in the estimate \eqref{eqn:D2U-estimate} applied on $F_n(\tau)-F_m(\tau)$, one finds, regarding the monotonicity of $\globalBound{\nabla\rho}{\infty}(\cdot)$,
\begin{align*}
&\Lpn{\naX F_n(\tau)-\naX F_m(\tau)}{\infty}{\bx} \\
\leq&~ \cstNewtonLn\left[\left(1+\ln\frac rR\right)\Lpn{\rho_n(\tau)-\rho_m(\tau)}{\infty}{\bx}+\frac{1}{r^\ddim} \Lpn{\rho_n(\tau)-\rho_m(\tau)}{1}{\bx} \right.\\
&\quad\left.\phantom{\frac{}{}}+ R\Lpn{\naX\rho_n(\tau)-\naX\rho_m(\tau)}{\infty}{\bx}\right] \\
\stackrel{\text{Lem.\ref{lem:local-lipschitz}}}{\leq}&~ \left[\cstNewtonLn\left(1+\ln\frac rR\right) \minR(\kappa,\ddim) + \frac{\cstNewtonLn}{r^\ddim}\right] 2\globalBound{\alpha}{\kappa,2}(\tau) \Apn{\alpha_n(\tau)-\alpha_m(\tau)}{\kappa,2}{\bz} + \frac\epsilon 2.
\end{align*}
where the first term can be made $\leq\frac{\epsilon}{2}$ choosing $n,m$ large enough by the Cauchy property of $\alpha_n\in\Ap{\kappa,2}{\bz}$ and the estimates from (ix). Hence, $\naX F_n$ is uniformly Cauchy on $[0,t]\times\RX$ and converges to a continuous limit, that is known to be $\naX F$ and $F$ is continuously differentiable. An identical argument proves the existence and continuity of $\naX K$. 

Now taking the limit $n\to\infty$ of
\begin{align*}
Z_n(s,t,\bz) = \bz + \int_{t}^{s} \left(V_n(\tau,t,\bz),F_n(\tau,X_n(\tau,t,\bz))\right)~\intd\tau
\end{align*}
proves $Z$ to solve an ODE system with right-hand side $(t,\bx,\bv)\mapsto (\bv,F(t,\bx))$. The continuity of $\naX F$ proves $Z\in\Cm{1}{I\times I\times\RX\times\RV}$ by standard theory \cite{arnold}.

Combining these results, one finds
\begin{align}
\label{eqn:alpha-construction}
\alpha(t,\bz) = \mathring\alpha(Z(0,t,\bz))~\exp\left(\int_{0}^{t} K(\tau,X(\tau,t,\bz))~\intd\tau\right)
\end{align}
to be continuously differentiable as a composition of such functions. \\

\noindent\textbf{(xi) Solution.} This is a consequence of Lemma \ref{lem:transport-formula} and eqn. \eqref{eqn:alpha-construction}. \qedhere
\end{proof}

\begin{cor}
\label{cor:bounds}
Let $\alpha\in \Cm{1}{I\times\RZ; \CN}$ be a local solution of the Hamiltonian Vlasov-Poisson equation with initial datum $\alpha(0)=\mathring\alpha\in\ball{\bn}{M}\subseteq\Bp{1,\kappa,2}{\bz}$. Then all the bounds $\globalBound{\cdot}{\cdot}$ derived in the proof of Lemma \ref{lem:local-solution} hold also for the characteristic tuple of $\alpha$.
\end{cor}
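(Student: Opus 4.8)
The plan is to observe that every a priori estimate derived for the iterates $\alpha_n$ in the proof of Lemma~\ref{lem:local-solution} rested on only four ingredients: the transport representation \eqref{eqn:transport-formula}, the regularity of the characteristic tuple together with the Newtonian bounds of Lemma~\ref{lem:newtonian-ineq}, the Lipschitz estimates of Lemma~\ref{lem:local-lipschitz}, and the flow comparison of Lemma~\ref{lem:symplectomorphism-difference} --- plus Gronwall's inequality. A genuine local solution $\alpha$ satisfies \eqref{eqn:transport-formula} by Lemma~\ref{lem:transport-formula}; in particular $\abs{\alpha(t,\bz)}=\abs{\mathring\alpha(Z(0,t,\bz))}$ (because $K$ is imaginary valued) and $Z(s,t,\cdot)$ is volume preserving. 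So the chain of estimates of parts (iv)--(viii) of that proof can be re-run for the single curve $t\mapsto\alpha(t)$, with each $\sup_n$ of a quantity depending on $\alpha_n$ replaced by the same quantity for $\alpha$, and with the two inductive Gronwall arguments (for $\Lpn{\naX F}{\infty}{\bx}$, and for $\ln_+\Lpn{\naX\varphi}{\infty}{\bx}$ and $\Lpn{\naX K}{\infty}{\bx}$) replaced by ordinary Gronwall inequalities for the corresponding scalar functions of $t$. The outputs are literally the same functions $\globalBound{\cdot}{\cdot}$, since those depend only on $M$, $\kappa$, $\ddim$.

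Concretely I would proceed in the order of Lemma~\ref{lem:local-solution}. Since $\alpha$ is admissible, $\sup_{\tau\le t}\Lpn{F(\tau)}{\infty}{\bx}<\infty$, so Lemma~\ref{lem:symplectomorphism-difference} applied to $Z$ and the free flow shows that $Z(0,t,\cdot)$ is a bounded perturbation of the affine free flow; the computation that produced \eqref{eqn:key-bound}, combined with Lemma~\ref{lem:local-lipschitz}(iii) with the second argument set to $0$ (to bound $\Lpn{F(\tau)}{\infty}{\bx}$ by $\cstNewtonLp{1}\minR(\kappa,\ddim)^{1-\frac1\ddim}\Apn{\alpha(\tau)}{\kappa,2}{\bz}^2$), then gives that $\alpha(t)\in\Ap{\kappa,2}{\bz}$ with $\Apn{\alpha(t)}{\kappa,2}{\bz}$ locally bounded on $I$ and satisfying the closed Volterra inequality
\[
\Apn{\alpha(t)}{\kappa,2}{\bz}\le M\Bigl(1+t+\cstNewtonLp{1}\minR(\kappa,\ddim)^{1-\frac1\ddim}\int_0^t\Apn{\alpha(\tau)}{\kappa,2}{\bz}^2e^\tau\intd\tau\Bigr)^{\frac\kappa2}.
\]
A standard Bihari-type comparison against the maximal solution of the associated integral equation yields $\Apn{\alpha(t)}{\kappa,2}{\bz}\le\globalBound{\alpha}{\kappa,2}(t)$ on $I\cap[0,T(M))$. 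Feeding this into the estimates for $\rho,F$, then those for $\naX\rho,\naX F,\naX Z$ (a Gronwall in $\Lpn{\naX F(\tau)}{\infty}{\bx}$), then $\varphi,K$, then $\naX\varphi,\naX K$ (the $\ln_+$-Gronwall), and finally $\naZ\alpha$, reproduces in turn $\globalBound{\rho}{\infty},\globalBound{\rho}{1},\globalBound{F}{\infty},\globalBound{Z-\bar Z}{\infty},\globalBound{\nabla\rho}{\infty},\globalBound{\nabla F}{\infty},\globalBound{\nabla Z}{\infty},\globalBound{\varphi}{\infty},\globalBound{\varphi}{1},\globalBound{K}{\infty},\globalBound{\nabla\varphi}{\infty},\globalBound{\nabla K}{\infty}$ and $\globalBound{\nabla\alpha}{\kappa,2}$ for the characteristic tuple of $\alpha$.

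The hard part is that admissibility (Definition~\ref{defn:solution}) only guarantees that $F,\naX F,K,\naX K$ are continuous and that $\sup_{\tau\le t}\Lpn{F(\tau)}{\infty}{\bx}<\infty$; it does \emph{not} a priori give $\alpha(t)\in\Bp{1,\kappa,2}{\bz}$, nor that $\naX F,\naX K,\naZ\alpha$ are finite, let alone dominated by the $\globalBound{\cdot}{\cdot}$, and the estimates just listed are circularly coupled ($\rho\to F\to Z\to\naX\rho\to\naX F\to\naX Z\to\naZ\alpha\to\naX\rho$). Hence the cascade must be packaged as a single continuation argument: I would let $J$ be the set of $t\in I\cap[0,T(M))$ such that $\tau\mapsto\alpha(\tau)$ is a continuous curve in $\Bp{1,\kappa,2}{\bz}$ on $[0,t]$ and \emph{all} the displayed bounds hold there. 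Then $J$ is nonempty, containing a neighbourhood of $0$ because $\alpha(0)=\mathring\alpha\in\ball{\bn}{M}$ and the relevant quantities are continuous at $t=0$ by \eqref{eqn:transport-formula} and continuity of the flow; it is closed in $I\cap[0,T(M))$ by continuity and the non-strictness of the bounds; and it is open because, once all bounds hold on $[0,t_0]$, the finitely many coupled estimates above propagate them to $[0,t_0+\delta]$. Therefore $J=I\cap[0,T(M))$, which is the assertion.

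One could alternatively note that the iterative scheme of Lemma~\ref{lem:local-solution} in fact goes through for arbitrary $\mathring\alpha\in\Bp{1,\kappa,2}{\bz}$ --- the regularity lemma for the characteristic tuple taking over the role of compact support in making the scheme well defined --- producing a solution that satisfies all the $\globalBound{\cdot}{\cdot}$, and then appeal to uniqueness of local solutions via the difference estimate of step (ix). But that uniqueness argument itself requires the very same a priori bounds on the given $\alpha$, so it is no shortcut around the continuation argument, which I therefore regard as the crux of the proof.
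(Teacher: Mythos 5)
Your core idea --- that a genuine solution satisfies the transport formula \eqref{eqn:transport-formula}, hence is a fixed point of the iteration of Lemma~\ref{lem:local-solution}, so that the $\sup_n$'s can be dropped and the inductive Gronwall arguments become ordinary scalar Gronwall (Volterra/Bihari) inequalities yielding the same bounds $\globalBound{\cdot}{\cdot}$ --- is exactly the paper's proof, stated there in three sentences. Where you go beyond the paper is in flagging the a priori finiteness issue. Definition~\ref{defn:solution}(ii) only asserts that $\naX F$ and $\naX K$ are continuous as functions of $(t,\bx)$, and (iii) only bounds $\Lpn{F(t)}{\infty}{\bx}$; it does not, on its face, give $\Lpn{\naX F(t)}{\infty}{\bx}<\infty$ or $\alpha(t)\in\Bp{1,\kappa,2}{\bz}$, which is precisely what the scalar Gronwall inequalities need as a hypothesis. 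The paper silently assumes this finiteness; you are right that a bootstrap is strictly needed to make the Gronwall legitimate, and the continuation argument (the connectedness set $J$) is the standard device. Two cautions on your continuation set: openness is the nontrivial direction and cannot be justified by ``the finitely many coupled estimates propagate,'' since those estimates presuppose the finiteness you are trying to extend --- you need to exhibit, from continuity of $\naX F,\naX K$ in $(t,\bx)$ and the bounds at $t_0$, an explicit $\delta>0$ of continued finiteness (e.g.\ via a second, short-time Gronwall on $[t_0,t_0+\delta]$ seeded by the values at $t_0$); and nonemptiness of $J$ is not immediate from $\alpha(0)=\mathring\alpha\in\Bp{1,\kappa,2}{\bz}$ alone, because $\Cp{1}{}$ regularity of $\alpha$ in $(t,\bz)$ does not by itself put $\alpha(t)$ in $\Bp{1,\kappa,2}{\bz}$ for $t>0$; you should derive this from \eqref{eqn:transport-formula} together with the boundedness of $\naX F,\naX K$ on a short initial interval. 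With those two points made precise, your argument is a genuinely more careful version of the paper's proof; the paper's one-liner trades rigor for brevity, and your version supplies the missing bootstrap.
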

\begin{proof}
As $\alpha$ is a solution, the transport formula \eqref{eqn:transport-formula} holds. In turn this implies that $\alpha$ and the characteristic tuple is a fixed point of the iterative scheme in Lemma \ref{lem:local-solution}. Therefore, all the inductive Gronwall arguments used to prove Lemma \ref{lem:local-solution} can be replaced by non-inductive Gronwall estimates that deliver the same bounds. \qedhere
\end{proof}

\begin{cor}
\label{cor:solution-distance}
Let $\alpha_1, \alpha_2 \in\Cm{1}{I\times\RX\times\RV}$ be two local solutions of the Hamiltonian Vlasov-Poisson equation with initial data $\mathring\alpha_1,\mathring\alpha_2\in\ball{\bn}{M}\subseteq\Bp{1,\kappa,2}{\bz}$. Then
\begin{equation}
\label{eqn:solution-stability}
\Apn{\alpha_1(t)-\alpha_2(t)}{\kappa,2}{\bz} \leq \Apn{\mathring\alpha_1-\mathring\alpha_2}{\kappa,2}{\bz} \left(1+t+\globalBound{Z-\bar{Z}}{\infty}(0,t)\right)^{\frac\kappa 2} \exp\left(\int_{0}^{t} \globalBound{1+2}{}(\tau) \intd\tau\right).
\end{equation}
\end{cor}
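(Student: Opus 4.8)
The plan is to re-run the difference estimate of step (ix) in the proof of Lemma~\ref{lem:local-solution}, with the two solutions $\alpha_1,\alpha_2$ now playing the roles of two consecutive Picard iterates, and then to close the resulting inequality by Gronwall instead of summing a Picard series. This is legitimate because $\mathring\alpha_1,\mathring\alpha_2\in\ball{\bn}{M}$, so by Corollary~\ref{cor:bounds} every global bound $\globalBound{\cdot}{\cdot}$ manufactured in that proof --- in particular $\globalBound{Z-\bar{Z}}{\infty}$, $\globalBound{\nabla F}{\infty}$, $\globalBound{\nabla K}{\infty}$, $\globalBound{\alpha}{\kappa,2}$, $\globalBound{\nabla\alpha}{\kappa,2}$, and hence $\globalBound{1}{}$ and $\globalBound{2}{}$ --- is simultaneously valid for the characteristic tuples $(\rho_i,F_i,\varphi_i,K_i)$ and the flows $Z_i=(X_i,V_i)$, $i=1,2$, on the common interval $I$.

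First I would invoke the transport formula (Lemma~\ref{lem:transport-formula}) to write $\alpha_i(t,\bz)=\mathring\alpha_i(Z_i(0,t,\bz))\exp\bigl(\int_0^t K_i(\tau,X_i(\tau,t,\bz))\,\intd\tau\bigr)$ and decompose $\abs{\alpha_1(t,\bz)-\alpha_2(t,\bz)}$ by the triangle inequality exactly as in step (ix): a data term $\abs{\mathring\alpha_1(Z_1(0,t,\bz))-\mathring\alpha_2(Z_2(0,t,\bz))}$, a term governed by $\int_0^t\Lpn{K_1(\tau)-K_2(\tau)}{\infty}{\bx}\,\intd\tau$, and a term governed by $\int_0^t\Lpn{\naX K_1(\tau)}{\infty}{\bx}\,\Lpn{X_1(\tau,t)-X_2(\tau,t)}{\infty}{\bz}\,\intd\tau$. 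The one genuinely new point compared with step (ix) is that the initial data differ: I would split the data term as $\abs{\mathring\alpha_1(Z_1(0,t,\bz))-\mathring\alpha_2(Z_1(0,t,\bz))}+\abs{\mathring\alpha_2(Z_1(0,t,\bz))-\mathring\alpha_2(Z_2(0,t,\bz))}$, the second summand being handled by a segment interpolation between $Z_2(0,t,\bz)$ and $Z_1(0,t,\bz)$ exactly like the leading term of step (ix). Passing to the $\Ap{\kappa,2}{\bz}$ norm with the volume-preserving change of variables and the bound $\bound{Z-\bar{Z}}{\infty}(0,t)\le\globalBound{Z-\bar{Z}}{\infty}(0,t)$ employed in the derivation of~\eqref{eqn:key-bound}, the first summand produces $\globalBound{1}{}(t)\,\Apn{\mathring\alpha_1-\mathring\alpha_2}{\kappa,2}{\bz}$ with $\globalBound{1}{}(t)=(1+t+\globalBound{Z-\bar{Z}}{\infty}(0,t))^{\kappa/2}$, while all remaining pieces are estimated verbatim as in step (ix).

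Next I would feed in the Lipschitz bounds of Lemma~\ref{lem:local-lipschitz} to replace $\Lpn{F_1(\tau)-F_2(\tau)}{\infty}{\bx}$ and $\Lpn{K_1(\tau)-K_2(\tau)}{\infty}{\bx}$ by constant multiples of $\Apn{\alpha_1(\tau)-\alpha_2(\tau)}{\kappa,2}{\bz}$, and Lemma~\ref{lem:symplectomorphism-difference} (together with $\globalBound{\nabla F}{\infty}$, $\globalBound{\nabla K}{\infty}$, $\globalBound{\alpha}{\kappa,2}$, $\globalBound{\nabla\alpha}{\kappa,2}$ from Corollary~\ref{cor:bounds}) to do the same for $\Lpn{X_1(\tau,t)-X_2(\tau,t)}{\infty}{\bz}$. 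Collecting the constants in exactly the same way as in step (ix), so that the coefficient of the integral term is again $\globalBound{2}{}(\tau)$, I obtain
\[
\globalBound{1}{}(t)^{-1}\,\Apn{\alpha_1(t)-\alpha_2(t)}{\kappa,2}{\bz}\;\le\;\Apn{\mathring\alpha_1-\mathring\alpha_2}{\kappa,2}{\bz}+\int_0^t\globalBound{2}{}(\tau)\,\Apn{\alpha_1(\tau)-\alpha_2(\tau)}{\kappa,2}{\bz}\,\intd\tau.
\]
Writing $w(t)\equiv\globalBound{1}{}(t)^{-1}\Apn{\alpha_1(t)-\alpha_2(t)}{\kappa,2}{\bz}$ and using $\globalBound{2}{}(\tau)\Apn{\alpha_1(\tau)-\alpha_2(\tau)}{\kappa,2}{\bz}=\globalBound{1+2}{}(\tau)\,w(\tau)$, this becomes the Gronwall inequality $w(t)\le\Apn{\mathring\alpha_1-\mathring\alpha_2}{\kappa,2}{\bz}+\int_0^t\globalBound{1+2}{}(\tau)\,w(\tau)\,\intd\tau$, which gives $w(t)\le\Apn{\mathring\alpha_1-\mathring\alpha_2}{\kappa,2}{\bz}\exp\bigl(\int_0^t\globalBound{1+2}{}(\tau)\,\intd\tau\bigr)$, i.e.~\eqref{eqn:solution-stability}.

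I expect the main --- indeed essentially the only --- obstacle to be bookkeeping: reproducing faithfully the long chain of estimates of step (ix) and verifying that no constant changes when the two Picard iterates are replaced by $\alpha_1$ and $\alpha_2$, which is exactly what Corollary~\ref{cor:bounds} guarantees. The one conceptually new ingredient, the extra splitting forced by $\mathring\alpha_1\neq\mathring\alpha_2$, contributes precisely the term $\Apn{\mathring\alpha_1-\mathring\alpha_2}{\kappa,2}{\bz}$ that appears as the size of the perturbation in the final estimate; everything else in step (ix) that used the common initial datum goes through unchanged with $\mathring\alpha_2$ (or $\mathring\alpha_1$) in its place.
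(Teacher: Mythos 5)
Your proposal is correct and follows essentially the same route as the paper: both use Corollary~\ref{cor:bounds} to inherit the uniform $\globalBound{\cdot}{\cdot}$ bounds, decompose the difference via the transport formula into the same four pieces (with the data term split as $\mathring\alpha_1-\mathring\alpha_2$ along $Z_1$ plus $\mathring\alpha_2$ along $Z_1$ versus $Z_2$), reuse the step (ix) machinery to obtain $\Apn{\alpha_1(t)-\alpha_2(t)}{\kappa,2}{\bz}\le\globalBound{1}{}(t)\Apn{\mathring\alpha_1-\mathring\alpha_2}{\kappa,2}{\bz}+\globalBound{1}{}(t)\int_0^t\globalBound{2}{}(\tau)\Apn{\alpha_1(\tau)-\alpha_2(\tau)}{\kappa,2}{\bz}\,\intd\tau$, and close by Gronwall. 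The only immaterial discrepancy is an index swap ($\naX K_1$ versus $\naX K_2$) in the third piece of the decomposition, which makes no difference since both are controlled by $\globalBound{\nabla K}{\infty}$.
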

\begin{proof}
By Corollary \ref{cor:bounds}, all the bounds for the solutions $\alpha_1,\alpha_2$ are uniformly valid. We remark that by the transport formula \eqref{eqn:transport-formula} the solutions are fixed points of the iterative scheme. Therefore, we can estimate
\begin{align*}
\abs{\alpha_1(t,\bz)-\alpha_2(t,\bz)} \leq& \abs{\mathring\alpha_1(Z_1(0,t,\bz))-\mathring\alpha_2(Z_1(0,t,\bz))} + \abs{\mathring\alpha_2(Z_1(0,t,\bz))-\mathring\alpha_2(Z_2(0,t,\bz))} \\
&+ \abs{\mathring\alpha_2(Z_2(0,t,\bz))} \int_{0}^{t} \abs{K_1(\tau,X_1(\tau,t,\bz))-K_2(\tau,X_1(\tau,t,\bz))} \intd\tau \\
&+ \abs{\mathring\alpha_2(Z_2(0,t,\bz))} \int_{0}^{t} \abs{K_2(\tau,X_1(\tau,t,\bz))-K_2(\tau,X_2(\tau,t,\bz))} \intd\tau.
\end{align*}
All terms except for the first one can be treated exactly as in the proof of Lemma \ref{lem:local-solution}. For the first term, we find in the $\Ap{\kappa,2}{\bz}$-norm
\begin{align*}
&\sup_{R\geq 0} (1+R)^{-\frac{\kappa}{2}} \left(\int_\RZ \sup_{\abs{\bar{\bz}}\leq R} \abs{\left(\mathring\alpha_1-\mathring\alpha_2\right)(Z_1(0,t,\bz+\bar{\bz}))}^2 \intd\bz\right)^{\frac{1}{2}} \\
\leq& \sup_{R\geq 0} (1+R)^{-\frac{\kappa}{2}} \left(\int_\RZ \sup_{\abs{\bar{\bz}}\leq (1+t)R + \globalBound{Z-\bar{Z}}{\infty}(0,t)} \abs{\left(\mathring\alpha_1-\mathring\alpha_2\right)(\bar{Z}(0,t,\bz)+\bar{\bz})}^2 \intd\bz\right)^{\frac{1}{2}} \\
\leq& \left(1+t+\globalBound{Z-\bar{Z}}{\infty}(0,t)\right)^{\frac{\kappa}{2}} \Apn{\mathring\alpha_1-\mathring\alpha_2}{\kappa,2}{\bz} = \globalBound{1}{}(t) \Apn{\mathring\alpha_1-\mathring\alpha_2}{\kappa,2}{\bz}.
\end{align*}
Combining this estimate with the estimates of the other terms yields
\begin{align*}
\Apn{\alpha_1(t)-\alpha_2(t)}{\kappa,2}{\bz} 
\leq&~ \globalBound{1}{}(t) \Apn{\mathring\alpha_1-\mathring\alpha_2}{\kappa,2}{\bz} \\
&\quad+ \globalBound{1}{}(t) \int_{0}^{t} \globalBound{2}{}(\tau) \Apn{\alpha_1(\tau)-\alpha_2(\tau)}{\kappa,2}{\bz} \intd\tau.
\end{align*}
A short Gronwall application delivers the desired inequality. \qedhere
\end{proof}

\begin{prop}
\label{prop:initial-datum-approximation}
Let $\left\{\mathring\alpha_n\right\}\subseteq\ball{\bn}{M}\subseteq\Bp{1,\kappa,2}{\bz}$, $\Bpn{\mathring{\alpha}_n-\mathring{\alpha}}{1,\kappa,2}{\bz}\to 0$, be a convergent sequence of compactly supported initial data with solutions $\alpha_n$ on $I(M)=[0,T(M))$. Then there is a solution $\alpha$ of the Hamiltonian Vlasov-Poisson equation for the initial datum $\mathring\alpha$ and for every $t<T(M)$
\begin{equation*}
\sup_{\tau\in[0,t]} \Apn{\alpha(\tau)-\alpha_n(\tau)}{\kappa,2}{\bz} \stackrel{n\to\infty}{\to} 0.
\end{equation*}
\end{prop}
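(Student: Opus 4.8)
The plan is to pass to the limit $n\to\infty$ in the iterative construction of Lemma \ref{lem:local-solution}, exploiting that the bounds $\globalBound{\cdot}{\cdot}$ of Corollary \ref{cor:bounds} and the stability estimate \eqref{eqn:solution-stability} of Corollary \ref{cor:solution-distance} are uniform over initial data in $\ball{\bn}{M}$. Since $\{\mathring\alpha_n\}$ converges in $\Bp{1,\kappa,2}{\bz}$ it is in particular Cauchy in $\Ap{\kappa,2}{\bz}$, and $\mathring\alpha\in\ball{\bn}{M}$ because this ball is closed. Applying \eqref{eqn:solution-stability} to the pair $(\alpha_n,\alpha_m)$ gives, for every $t<T(M)$,
\[
\sup_{\tau\in[0,t]}\Apn{\alpha_n(\tau)-\alpha_m(\tau)}{\kappa,2}{\bz}\leq\globalBound{1}{}(t)~\exp\left(\int_{0}^{t}\globalBound{1+2}{}(\tau)~\intd\tau\right)\Apn{\mathring\alpha_n-\mathring\alpha_m}{\kappa,2}{\bz},
\]
whose right-hand side vanishes as $n,m\to\infty$; hence $\alpha_n$ converges in the $\Apn{\cdot}{\kappa,2}{\bz}$-norm, uniformly on $[0,t]$, to a limit $\alpha\in\Lp{\infty}{t}\Ap{\kappa,2}{\bz}$ with $\alpha(0)=\mathring\alpha$. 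Since $\Ap{\kappa,2}{\bz}$ embeds continuously into $\Lp{\infty}{\bz}$ (cf. Appendix \ref{chap:integrable-local-supremum}), the convergence $\alpha_n\to\alpha$ is locally uniform, and applying the same bound to $\naZ\mathring\alpha_n$ gives $\naZ\mathring\alpha_n\to\naZ\mathring\alpha$ locally uniformly as well. It remains to show that $\alpha$ is an admissible local solution with datum $\mathring\alpha$; the asserted convergence rate is then a final application of \eqref{eqn:solution-stability} to $(\alpha,\alpha_n)$.

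To identify $\alpha$ as a solution, I would transfer convergence to the characteristic data and the flows. By Corollary \ref{cor:bounds}, the quantities $\globalBound{\alpha}{\kappa,2},\globalBound{\nabla\alpha}{\kappa,2},\globalBound{\rho}{\infty},\globalBound{\nabla\rho}{\infty},\globalBound{F}{\infty},\globalBound{\nabla F}{\infty},\globalBound{\varphi}{\infty},\globalBound{\nabla\varphi}{\infty}$ bound the corresponding quantities of every $\alpha_n$ uniformly in $n$. Feeding the Cauchy property of $\{\alpha_n\}$ in $\Ap{\kappa,2}{\bz}$ into Lemma \ref{lem:local-lipschitz} shows $F_n,K_n$ are uniformly Cauchy on $[0,t]\times\RX$ and $\rho_n,\varphi_n$ are Cauchy in $\Lp{\infty}{\bx}\cap\Lp{1}{\bx}$; Lemma \ref{lem:symplectomorphism-difference}, with $\naX F_n$ dominated by $\globalBound{\nabla F}{\infty}$, then gives that $Z_n$ is uniformly Cauchy on compacta. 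The delicate point -- which I expect to be the main obstacle -- is the convergence of the gradients $\naX F_n$ and $\naX K_n$: one cannot simply invoke $\alpha_n\to\alpha$ here, but must repeat the argument of step (x) in the proof of Lemma \ref{lem:local-solution}, applying the logarithmic estimate \eqref{eqn:D2U-estimate} to $\rho_n(\tau)-\rho_m(\tau)$ with truncation radius $R=\epsilon/(4\cstNewtonLn\globalBound{\nabla\rho}{\infty}(t))$, so that the $\naX\rho$-term is absorbed into $\tfrac\epsilon2$ by means of the uniform bound $\globalBound{\nabla\rho}{\infty}$ while the remaining terms are made small through Lemma \ref{lem:local-lipschitz} and the Cauchy property of $\{\alpha_n\}$; the same device with $\globalBound{\nabla\varphi}{\infty}$ handles $\naX K_n$. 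Standard ODE theory \cite{arnold} then upgrades $Z_n\to Z$ to $\mathcal C^1$-convergence and identifies the limit $Z$ as the flow of $(t,\bx,\bv)\mapsto(\bv,F(t,\bx))$.

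Differentiating the representation $\alpha_n(t,\bz)=\mathring\alpha_n(Z_n(0,t,\bz))~\exp\left(\int_{0}^{t}K_n(\tau,X_n(\tau,t,\bz))~\intd\tau\right)$ and letting $n\to\infty$ -- every factor now converging locally uniformly -- shows $\naZ\alpha_n\to\naZ\alpha$ locally uniformly, so $\alpha\in\Cm{1}{I\times\RX\times\RV;\CN}$. By lower semicontinuity of $\Apn{\cdot}{\kappa,2}{\bz}$ under pointwise limits together with Corollary \ref{cor:bounds}, $\alpha(t)\in\Bp{1,\kappa,2}{\bz}$ for every $t$, so $\alpha$ has a well-defined characteristic tuple with the regularity established earlier for $\Bp{1,\kappa,2}{\bz}$ functions; this tuple coincides with $\lim_n(\rho_n,F_n,\varphi_n,K_n)$ by Lemma \ref{lem:local-lipschitz} applied to $(\alpha_n,\alpha)$, and $\sup_{\tau\leq t}\Lpn{F(\tau)}{\infty}{\bx}\leq\globalBound{F}{\infty}(t)<\infty$, so $\alpha$ is admissible. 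Passing to the limit in the displayed representation yields the transport formula \eqref{eqn:transport-formula} for $\alpha$, while passing to the limit in the integrated characteristic system confirms that $Z$ solves it with force $F$; Lemma \ref{lem:transport-formula} then shows $\alpha$ is a local solution with $\alpha(0)=\lim_n\mathring\alpha_n=\mathring\alpha$. Finally, \eqref{eqn:solution-stability} applied to $(\alpha,\alpha_n)$ gives
\[
\sup_{\tau\in[0,t]}\Apn{\alpha(\tau)-\alpha_n(\tau)}{\kappa,2}{\bz}\leq\globalBound{1}{}(t)~\exp\left(\int_{0}^{t}\globalBound{1+2}{}(\tau)~\intd\tau\right)\Apn{\mathring\alpha-\mathring\alpha_n}{\kappa,2}{\bz},
\]
whose right-hand side tends to $0$ as $n\to\infty$, which is the assertion.
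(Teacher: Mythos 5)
Your proposal follows essentially the same route as the paper's proof: use the stability estimate of Corollary~\ref{cor:solution-distance} to get the Cauchy property of $\{\alpha_n\}$ in $\Lp{\infty}{t}\Ap{\kappa,2}{\bz}$, then transfer convergence to the characteristic tuple and flows via Lemma~\ref{lem:local-lipschitz}, Lemma~\ref{lem:symplectomorphism-difference}, and the logarithmic estimate~\eqref{eqn:D2U-estimate} with the small truncation radius to control $\naX F_n$ and $\naX K_n$, and finally pass to the limit in the transport formula and invoke Lemma~\ref{lem:transport-formula}. The only (cosmetic) divergence is that you deduce $\alpha(t)\in\Bp{1,\kappa,2}{\bz}$ by a Fatou-type lower-semicontinuity argument from the uniform bound $\globalBound{\nabla\alpha}{\kappa,2}$, whereas the paper re-derives the explicit bound on $\Apn{\naZ\alpha(t)}{\kappa,2}{\bz}$ from the limiting transport formula -- both are correct.
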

\begin{proof}
\textbf{(i) Limit.} At first, we remark that $\{\mathring\alpha_n\}$ is Cauchy in $\Bp{1,\kappa,2}{\bz}$ and hence is $\{\alpha_n\}$ in $\Lp{\infty}{t}\Ap{\kappa,2}{\bz}$ by Corollary \ref{cor:solution-distance} if one restricts $t$ on any compact subinterval $J\subseteq I(M)$. As all the $\alpha_n$ are continuous, there is a limit $\alpha\in\Cm{0}{I(M)\times\RZ;\CN}$, s.t.
\begin{align*} \sup_{t\in J} \Apn{\alpha(t)-\alpha_n(t)}{\kappa,2}{\bz}\stackrel{n\to\infty}{\longrightarrow} 0.
\end{align*} \\

\noindent\textbf{(ii) Tuple convergence.} We now want to prove that $\{\rho_n\}, \{F_n\}$ converge to density $\rho$ and force $F$ induced by the limit $\alpha$. In fact, this is an immediate consequence of Lemma \ref{lem:local-lipschitz}-(i)-(iii). We remark that even $\naX F$ exists and is continuous, because $\{\naX F_n\}$ is a Cauchy sequence in $\Lp{\infty}{(t,\bx)}$. We argue similarly to (x) in the proof of Lemma \ref{lem:local-solution}. By Corollary \ref{cor:bounds}, for all $n$ $\sup_{\tau\leq t} \Lpn{\naX\rho_n(\tau)}{\infty}{\bx} \leq \globalBound{\nabla \rho}{\infty}(t)$. By \eqref{eqn:D2U-estimate} in Lemma \ref{lem:newtonian-ineq}, we have
\begin{align*}
\Lpn{\naX F_n(t)-\naX F_m(t)}{\infty}{\bx}
\leq&~ \cstNewtonLn \left[\left(1+\ln\frac r R\right)\Lpn{\rho_n(t)-\rho_m(t)}{\infty}{\bx} \right.\\
&\quad\left.\phantom{\frac{}{}}+ \frac 1{r^\ddim} \Lpn{\rho_n-\rho_m}{1}{\bx} + R\Lpn{\naX\rho_n(t)-\naX\rho_m(t)}{\infty}{\bx}\right].
\end{align*}
For any given $\epsilon>0$, we choose $R=\epsilon/(4\cstNewtonLn\globalBound{\nabla\rho}{\infty}(t))$ and some $r\geq R$. The first two terms are controlled by Lemma \ref{lem:local-lipschitz}-(i)-(ii) and the Cauchy property of $\alpha_n$. 

For $\varphi$, we only need to remark that for all $n$ $\sup_{\tau\leq t} \Apn{\naZ\alpha_n(\tau)}{\kappa,2}{\bz}\leq \globalBound{\nabla\alpha}{\kappa,2}(t)$. By Lemma \ref{lem:local-lipschitz}-(iv)-(vi), $\varphi_n$ and $K_n$ will be Cauchy in their respective spaces. Denote the limits by $\varphi_\infty$ and $K_\infty$. After noticing $\sup_n \sup_{\tau\in[0,t]} \Lpn{\naX\varphi_n(\tau)}{\infty}{\bx} \leq \globalBound{\nabla\varphi}{\infty}(t)$, the argument of $\naX F$ can be copied to prove the convergence $\Lpn{\naX K_n(t)-\naX K_\infty(t)}{\infty}{\bx}\to 0$. 

Now let $Z_n$ denote the solution map of the characteristic system. We find by Lemma \ref{lem:symplectomorphism-difference}
\begin{align*}
\Lpn{Z_n(s,t)-Z_m(s,t)}{\infty}{\bz} \leq \int_{s}^{t} \Lpn{F_n(\tau)-F_m(\tau)}{\infty}{\bx} \exp\left(\int_{s}^{\tau} \left(1+\globalBound{\nabla F}{\infty}(\tilde{\tau})\right) \intd\tilde{\tau}\right)\intd\tau,
\end{align*}
proving the uniform Cauchy property of $Z_n$ and thus its convergence. This allows to compute
\begin{align*}
Z(s,t,\bz) \equiv& \lim_n Z_n(s,t,\bz) = \lim_n (X_n,V_n)(s,t,\bz) \\
=&~\bz + \lim_n \int_{t}^{s} (V_n(\tau,t,\bz),F_n(\tau,X_n(\tau,t,\bz))) \intd\tau \\
=&~\bz + \int_{t}^{s} (V(\tau,t,\bz),F(\tau,X(\tau,t,\bz))) \intd\tau.
\end{align*}
Hence, $Z$ is the solution map of a characteristic system with $\Cp{0}{t}\Cp{1}{\bz}$ right-hand side. By standard theory \cite{arnold}, $Z\in\Cp{1}{}$.
\\

\noindent\textbf{(iii) Solution.} As all the $\alpha_n$ are solutions, they fulfill the transport formula
\begin{equation*}
\alpha_n(t,\bz) = \mathring\alpha_n(Z_n(0,t,\bz)) \exp\left(\int_{0}^{t} K_n(\tau,X_n(\tau,t,\bz)) \intd\tau\right).
\end{equation*}
In the previous part we have shown that all the sequences converge for the limit $n\to\infty$ sufficiently nice to derive
\begin{equation*}
\alpha(t,\bz) = \mathring\alpha(Z(0,t,\bz)) \exp\left(\int_{0}^{t} K_\infty(\tau,X(\tau,t,\bz)) \intd\tau\right)
\end{equation*}
and indeed, all the right-hand side is $\Cp{1}{(t,\bz)}$. $\alpha$ will be a local solution for the initial datum $\mathring\alpha$ by Lemma \ref{lem:transport-formula} if $(\rho,F,\varphi_\infty,K_\infty)$ is indeed its characteristic tuple. At first, we verify that $\varphi$ is well-defined. In fact,
\begin{align*}
\Apn{\naZ\alpha(t)}{\kappa,2}{\bz} 
\leq& \left(1+t+\Lpn{Z(0,t)-\bar{Z}(0,t)}{\infty}{\bz}\right)^{\frac{\kappa}{2}} \\
&\cdot \left(\Apn{\naZ\mathring\alpha}{\kappa,2}{\bz} \Lpn{\naZ Z(0,t)}{\infty}{\bz} + \Apn{\mathring\alpha}{\kappa,2}{\bz} \int_{0}^{t} \Lpn{\naX K_\infty(\tau)}{\infty}{\bx} \Lpn{\naZ Z(\tau,t)}{\infty}{\bz}\intd\tau\right) \\
\leq&~2 \left(1+t+\Lpn{Z(0,t)-\bar{Z}(0,t)}{\infty}{\bz}\right)^{\frac{\kappa}{2}} \\
&\cdot \left(\Apn{\naZ\mathring\alpha}{\kappa,2}{\bz} \exp\left(\int_{0}^{t} \left(1+\Lpn{\naX F(\tau)}{\infty}{\bx}\right) \intd\tau\right)\right. \\
&\left.+ \Apn{\mathring\alpha}{\kappa,2}{\bz} \int_{0}^{t} \Lpn{\naX K_\infty(\tau)}{\infty}{\bx} \exp\left(\int_{\tau}^{t} \left(1+\Lpn{\naX F(\tilde{\tau})}{\infty}{\bx}\right) \intd\tilde{\tau}\right) \intd\tau\right) \\
\leq&~2 \left(1+t+\globalBound{Z-\bar{Z}}{\infty}(0,t)\right)^{\frac{\kappa}{2}} \left(\Apn{\naZ\mathring\alpha}{\kappa,2}{\bz} \exp\left(\int_{0}^{t} \left(1+\globalBound{\nabla F}{\infty}(\tau)\right) \intd\tau\right)\right. \\
&\left.+ \Apn{\mathring\alpha}{\kappa,2}{\bz} \int_{0}^{t} \globalBound{\nabla K}{\infty}(\tau) \exp\left(\int_{\tau}^{t} \left(1+\globalBound{\nabla F}{\infty}(\tilde{\tau})\right) \intd\tilde{\tau}\right) \intd\tau\right),
\end{align*}
proving that $\alpha(t)$ has a well-defined characteristic tuple. Now for the phase force $K$, we indeed find
\begin{align*}
\Lpn{K(t)-K_\infty(t)}{\infty}{\bx} \leq& \Lpn{K(t)-K_n(t)}{\infty}{\bx} + \Lpn{K_\infty(t)-K_n(t)}{\infty}{\bx} \\
\leq&~ \cstNewtonLp{1}^{1-\frac 1\ddim} \minR(\kappa,\ddim)^{1-\frac 1\ddim} \left(\Apn{\naV\alpha(t)}{\kappa,2}{\bz}+\Apn{\naV\alpha_n(t)}{\kappa,2}{\bz}\right) \Apn{\alpha(t)-\alpha_n(t)}{\kappa,2}{\bz} \\
&+ \Lpn{K_\infty(t)-K_n(t)}{\infty}{\bx} \stackrel{n\to\infty}{\to} 0.
\end{align*}
Therefore, $\alpha$ satisfies the transport formula and is a solution by Lemma \ref{lem:transport-formula} with initial datum $\alpha(0)=\mathring\alpha$. \qedhere
\end{proof}

\begin{prop}[Well-posedness]
\label{prop:time-evolution-operator}
For every $M>0$ and $t<T(M)$ there is a Lipschitz continuous time evolution operator
\begin{equation}
\label{eqn:time-evolution-operator}
\EvoOp{M}{t}: \begin{array}{ccc}
\ball{\bn}{M} \subseteq \Bp{1,\kappa,2}{\bz} & \to & \Ap{\kappa,2}{\bz} \\
\mathring\alpha & \mapsto & \alpha(t),
\end{array}
\end{equation}
that maps the initial datum to the state at time $t$. The function $\alpha(t,\bz) \equiv \left(\EvoOp{M}{t}\mathring\alpha\right)(\bz)$ then is the unique local solution of the Hamiltonian Vlasov-Poisson equation with initial datum $\mathring\alpha$. Indeed, we have $\EvoOp{M}{t}(\ball{\bn}{M})\subseteq \Bp{1,\kappa,2}{\bz}\subseteq \Ap{\kappa,2}{\bz}$.
\end{prop}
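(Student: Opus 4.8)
The plan is to read the proposition off from the results already established — Lemma~\ref{lem:local-solution}, Corollaries~\ref{cor:bounds} and~\ref{cor:solution-distance}, and Proposition~\ref{prop:initial-datum-approximation} — the only preparatory step being the passage from the compactly supported data of Lemma~\ref{lem:local-solution} to arbitrary data in $\Bp{1,\kappa,2}{\bz}$.

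\emph{Construction of the evolution operator.} Fix $\mathring\alpha\in\ball{\bn}{M}$. Using that $\Ccm{1}{\RZ;\CN}$ is dense in $\Bp{1,\kappa,2}{\bz}$ — a cutoff argument that exploits the decay at infinity of functions in $\Ap{\kappa,2}{}$ noted in the remark following Theorem~\ref{thm:local-existence}, cf. Appendix~\ref{chap:integrable-local-supremum} — one chooses compactly supported $\mathring\alpha_n\in\Ccm{1}{\RZ;\CN}$ with $\Bpn{\mathring\alpha_n-\mathring\alpha}{1,\kappa,2}{\bz}\to 0$; after multiplying the $\mathring\alpha_n$ by factors tending to $1$ if necessary, one may assume $\mathring\alpha_n\in\ball{\bn}{M}$, so Lemma~\ref{lem:local-solution} yields continuously differentiable solutions $\alpha_n$ on $I(M)=[0,T(M))$. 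Proposition~\ref{prop:initial-datum-approximation} then produces a local solution $\alpha\in\Cm{1}{I(M)\times\RZ;\CN}$ with $\alpha(0)=\mathring\alpha$ such that $\sup_{\tau\in[0,t]}\Apn{\alpha(\tau)-\alpha_n(\tau)}{\kappa,2}{\bz}\to 0$ for all $t<T(M)$, and one sets $\EvoOp{M}{t}\mathring\alpha\equiv\alpha(t)$. By Corollary~\ref{cor:bounds} the bounds $\globalBound{\cdot}{\cdot}$ of Lemma~\ref{lem:local-solution} apply to $\alpha$; in particular $\Apn{\alpha(t)}{\kappa,2}{\bz}\le\globalBound{\alpha}{\kappa,2}(t)<\infty$ and $\Apn{\naZ\alpha(t)}{\kappa,2}{\bz}\le\globalBound{\nabla\alpha}{\kappa,2}(t)<\infty$, and since $\alpha(t)\in\Cm{1}{\RZ}$ this gives $\alpha(t)\in\Bp{1,\kappa,2}{\bz}$, hence $\EvoOp{M}{t}(\ball{\bn}{M})\subseteq\Bp{1,\kappa,2}{\bz}\subseteq\Ap{\kappa,2}{\bz}$.

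\emph{Uniqueness and the Lipschitz estimate.} Let $\alpha_1,\alpha_2$ be two continuously differentiable local solutions with initial data $\mathring\alpha_1,\mathring\alpha_2\in\ball{\bn}{M}$, defined on a common subinterval of $I(M)$. Corollary~\ref{cor:solution-distance} gives
\[
\Apn{\alpha_1(t)-\alpha_2(t)}{\kappa,2}{\bz}\le\Apn{\mathring\alpha_1-\mathring\alpha_2}{\kappa,2}{\bz}\,\bigl(1+t+\globalBound{Z-\bar{Z}}{\infty}(0,t)\bigr)^{\frac\kappa 2}\exp\Bigl(\int_{0}^{t}\globalBound{1+2}{}(\tau)\,\intd\tau\Bigr).
\]
Taking $\mathring\alpha_1=\mathring\alpha_2$ forces $\alpha_1\equiv\alpha_2$ on the common domain, so the $\alpha$ constructed above is the \emph{unique} local solution with datum $\mathring\alpha$; in particular it is independent of the approximating sequence and $\EvoOp{M}{t}$ is well defined. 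For general $\mathring\alpha_1,\mathring\alpha_2\in\ball{\bn}{M}$ the same display, whose prefactor depends only on $M$, $\kappa$ and $t$, together with $\Apn{\cdot}{\kappa,2}{\bz}\le\Bpn{\cdot}{1,\kappa,2}{\bz}$, shows that $\mathring\alpha\mapsto\EvoOp{M}{t}\mathring\alpha$ is Lipschitz from $\bigl(\ball{\bn}{M},\Bpn{\cdot}{1,\kappa,2}{\bz}\bigr)$ into $\Ap{\kappa,2}{\bz}$.

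\emph{Expected main obstacle.} All the analytic weight sits in the lemma and corollaries invoked; the one genuinely new point is the density of $\Ccm{1}{\RZ;\CN}$ in $\Bp{1,\kappa,2}{\bz}$ together with the fact that the approximants can be kept inside $\ball{\bn}{M}$. A lesser technicality is the bookkeeping of intervals of existence in the uniqueness argument, which is handled by applying Corollary~\ref{cor:solution-distance} on the intersection of the two existence intervals.
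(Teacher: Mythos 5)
Your proposal is correct and takes essentially the same route as the paper: existence on compactly supported data via Lemma~\ref{lem:local-solution}, passage to arbitrary data in $\ball{\bn}{M}$ by density (Lemma~\ref{lem:compact-dense}) together with Proposition~\ref{prop:initial-datum-approximation}, Lipschitz continuity and uniqueness from Corollary~\ref{cor:solution-distance}, and the $\Bp{1,\kappa,2}{\bz}$-invariance from Corollary~\ref{cor:bounds}. The only cosmetic difference is that the paper phrases the extension step abstractly (unique Lipschitz extension on a dense subset of a complete space, then identifies the extension with a solution via Proposition~\ref{prop:initial-datum-approximation}), whereas you build the solution directly from Proposition~\ref{prop:initial-datum-approximation}; you also explicitly handle the small point of keeping the approximating compactly supported data inside $\ball{\bn}{M}$, which the paper leaves implicit.
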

\begin{proof}
Let $M>0$ be arbitrary. At first, restrict the domain of definition to the set of compactly supported initial data. By Lemma \ref{lem:local-solution}, there is the uniform time of existence $T(M)$, s.t. there is a solution for any of these initial data. By Corollary \ref{cor:solution-distance}, the time evolution operator is globally Lipschitz on the compactly supported data in $\ball{\bn}{M}\subseteq \Bp{1,\kappa,2}{\bz}$. By Lemma \ref{lem:compact-dense}, these initial data are dense. Because $\Bp{1,\kappa,2}{\bz}$ is complete by Theorem \ref{thm:ap-banach}, there is a unique Lipschitz continuous extension of $\EvoOp{M}{t}$ on $\ball{\bn}{M}\subseteq \Bp{1,\kappa,2}{\bz}$. Proposition \ref{prop:initial-datum-approximation} finally proves that the functions obtained by this technical extension truly are solutions of the Hamiltonian Vlasov-Poisson equation.

The uniqueness of these solutions is immediately provided by Corollary \ref{cor:solution-distance}, as any two solutions with same initial datum will coincide.

The final statement is obvious once we recognize that the finiteness of $\Bpn{\alpha(t)}{1,\kappa,2}{\bz}$ was estimated up to $T(M)$. \qedhere
\end{proof}

\subsection{Global existence}

Due to the close relation with the classical Vlasov-Poisson equation, the Hamiltonian Vlasov-Poisson equation admits almost the same continuation and globality criteria due to Pfaffelmoser-Schaeffer \cite{pfaffelmoser,schaeffer} and Lions-Perthame \cite{lionsperthame}. For completeness, we include the leading steps before adapting their central results.

\begin{prop}[Maximality criterion]
Let $\alpha\in\Cm{1}{[0,T)\times\RZ;\CN}$ be a local solution of the Hamiltonian Vlasov-Poisson equation with initial datum $\mathring{\alpha}\in\Bp{1,\kappa,2}{\bz}$. It is maximal if either $T=\infty$ or $\Bpn{\alpha(t)}{1,\kappa,2}{\bz} \to \infty$ for $t\uparrow T$. In any other case, there exists an extension. In particular, we find a maximal solution for every initial datum $\mathring{\alpha}$.
\end{prop}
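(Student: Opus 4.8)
I would prove the statement by the classical continuation scheme, using two facts throughout: the Hamiltonian Vlasov--Poisson equation carries no explicit time dependence, so Theorem~\ref{thm:local-existence}, Proposition~\ref{prop:time-evolution-operator} and Corollaries~\ref{cor:bounds}, \ref{cor:solution-distance} hold verbatim with an arbitrary initial time $t_0$ in place of $0$; and local solutions with the same datum are unique. \textbf{Step 1 (the extension statement).} Suppose $T<\infty$ and $\Bpn{\alpha(t)}{1,\kappa,2}{\bz}$ does not tend to $\infty$ as $t\uparrow T$; since these norms are finite by Corollary~\ref{cor:bounds}, there are $M>0$ and $t_n\uparrow T$ with $\Bpn{\alpha(t_n)}{1,\kappa,2}{\bz}\le M$. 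I would apply Proposition~\ref{prop:time-evolution-operator} with initial time $t_n$ and datum $\alpha(t_n)$ to obtain a local solution $\beta_n$ on $[t_n,t_n+T(M))$, and pick $n$ so large that $t_n+T(M)>T$. On the common interval $[t_n,T)$ the functions $\alpha$ and $\beta_n$ are local solutions agreeing at $t_n$, hence coincide there by Corollary~\ref{cor:solution-distance}; so the function equal to $\alpha$ on $[0,T)$ and to $\beta_n$ on $[t_n,t_n+T(M))$ is well defined and continuously differentiable. Composing the characteristic flows, I would check that it satisfies the transport formula \eqref{eqn:transport-formula} on $[0,t_n+T(M))$ and is admissible, so by Lemma~\ref{lem:transport-formula} it is a local solution properly extending $\alpha$. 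This is the assertion ``in any other case there exists an extension''.

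\textbf{Step 2 (a maximal solution for every datum).} Given $\mathring\alpha\in\Bp{1,\kappa,2}{\bz}$, I would set $T^\ast\equiv\sup\{S>0:\text{a local solution on }[0,S)\text{ with }\alpha(0)=\mathring\alpha\text{ exists}\}$, which is positive by Theorem~\ref{thm:local-existence}. By uniqueness any two such solutions agree on the intersection of their domains, so they glue to a single local solution $\alpha^\ast$ on $[0,T^\ast)$; it admits no proper extension (such an extension would live on $[0,S)$ with $S>T^\ast$), hence is maximal. If $T^\ast<\infty$, Step~1 forces $\Bpn{\alpha^\ast(t)}{1,\kappa,2}{\bz}\to\infty$ as $t\uparrow T^\ast$, for otherwise $\alpha^\ast$ would extend; this gives the dichotomy for $\alpha^\ast$, and in particular the ``in particular'' clause.

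\textbf{Step 3 (the stated condition is sufficient for maximality).} Let $\alpha$ on $[0,T)$ be any local solution with $T=\infty$ or $\Bpn{\alpha(t)}{1,\kappa,2}{\bz}\to\infty$; I would show it cannot be extended. The case $T=\infty$ is trivial. If $T<\infty$ and $\alpha$ extended to a local solution $\tilde\alpha$ on $[0,\tilde T)$ with $\tilde T>T$, then $\tilde\alpha$ is admissible on an interval reaching past $T$, so condition (iii) of Definition~\ref{defn:solution} yields $\sup_{\tau\le T}\Lpn{F(\tau)}{\infty}{\bx}<\infty$. Feeding this single scalar into the a priori hierarchy behind Corollary~\ref{cor:bounds} --- starting from \eqref{eqn:key-bound} for $\Apn{\tilde\alpha(\cdot)}{\kappa,2}{\bz}$ and then running the variation-of-constants estimates for $\naZ\tilde\alpha$, $\naX F$, $K$ and $\naX K$ --- I expect every bound to close by ordinary Gronwall arguments on $[0,T]$, giving $\sup_{t\le T}\Bpn{\tilde\alpha(t)}{1,\kappa,2}{\bz}<\infty$; since $\tilde\alpha=\alpha$ on $[0,T)$ this contradicts $\Bpn{\alpha(t)}{1,\kappa,2}{\bz}\to\infty$, so $\alpha$ is maximal.

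\textbf{Main obstacle.} I expect the real difficulty to be in Step~3: ruling out ``blow-up in time'' of $\Bpn{\alpha(t)}{1,\kappa,2}{\bz}$ for a solution that nevertheless persists past $T$. A naive restart is useless here, since the guaranteed lifespan $T(M)$ of Theorem~\ref{thm:local-existence} shrinks to $0$ as $M\to\infty$, so restarting from times $s\uparrow T$ with $\Bpn{\alpha(s)}{1,\kappa,2}{\bz}\to\infty$ need not reach beyond $T$. The point to exploit is that a genuine extended solution is admissible up to $T$, so $\sup_{\tau\le T}\Lpn{F(\tau)}{\infty}{\bx}$ is finite by definition; once that one quantity is controlled, every estimate in the proof of Lemma~\ref{lem:local-solution} turns into a closed, non-inductive Gronwall estimate on $[0,T]$, which is precisely the mechanism of Corollary~\ref{cor:bounds}. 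By comparison, the flow-composition check in Step~1 and the patching argument in Step~2 should be routine.
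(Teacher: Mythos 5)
Your proof is correct and follows essentially the same restart strategy as the paper: Step~1 is exactly the paper's argument (pick $t_0$ close to $T$ with bounded $\Bp{1,\kappa,2}{\bz}$-norm, restart via the local existence theorem, and glue by uniqueness), and Step~2 is the standard sup-of-lifespans construction that the paper leaves implicit in its final ``in particular'' clause. The one place you genuinely go beyond the paper is Step~3: the paper disposes of the sufficiency direction with the single phrase ``Maximality is obvious in both cases,'' while you observe correctly that for the blow-up case this requires a real argument --- a putative extension $\tilde\alpha$ is admissible past $T$, so Definition~\ref{defn:solution}(iii) bounds $\Lpn{F}{\infty}{\bx}$ up to $T$, and then the a priori hierarchy of Lemma~\ref{lem:local-solution} (starting from \eqref{eqn:key-bound} and closed by Gronwall, exactly as later recycled in Proposition~\ref{prop:globality-criterion}) bounds $\Bpn{\tilde\alpha(t)}{1,\kappa,2}{\bz}$ on $[0,T]$, contradicting the assumed divergence. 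That is a sound and arguably necessary elaboration; your only overstatement is calling it ``the main obstacle,'' since once one notices that admissibility already forces $\sup_{\tau\le T}\Lpn{F(\tau)}{\infty}{\bx}<\infty$, the estimate closes mechanically --- which is presumably why the paper calls it obvious.
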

\begin{proof}
Maximality is obvious in both cases. Hence assume, none of the cases holds. By definition, this yields
\begin{equation*}
\exists M\geq 0: \forall \bar{T} \in[0,T)~\exists t\in(\bar{T},T): \Bpn{\alpha(t)}{1,\kappa,2}{\bz} \leq M.
\end{equation*}
In conclusion, one can pick some $t_0\in[T-\frac{T(M)}{2},T)$, where $T(M)$ is the lower bound for the existence interval of the solution, and continue the solution to at least $T+\frac{T(M)}{2}$, contradicting maximality. \qedhere
\end{proof}

\begin{defn}[Velocity moments]
Let $k\geq0$ be arbitrary. For $\alpha:\RZ\to\CN$ measurable we define the \textbf{$k$-th local velocity moment}
\begin{equation}
\moment{\alpha}{k}(\bx) \equiv \int_{\RV} \abs{\bv}^k \abs{\alpha(\bx,\bv)}^2 \intd\bv
\end{equation}
and the \textbf{$k$-th velocity moment}
\begin{equation}
\Moment{\alpha}{k} \equiv \int_{\RX} \moment{\alpha}{k}(\bx)~ \intd\bx = \int_{\RX} \int_{\RV} \abs{\bv}^k \abs{\alpha(\bx,\bv)}^2 \intd\bv~ \intd\bx.
\end{equation}
\end{defn}

\begin{lem}
\label{lem:moments}
\cite[Lem.1.8]{rein}
Let $k\geq 0$ be arbitrary and $\alpha: \RZ\to\CN$ be measurable. Now let $p\in[1,\infty]$ be some exponent and $0\leq l\leq k<\infty$. Define the exponent
\begin{equation*}
r \equiv \frac {k+\ddim\frac{p-1}{p}} {l+\frac{k-l}{p}+\ddim\frac{p-1}{p}}.
\end{equation*}
Then holds the estimate
\begin{equation}
\label{eqn:moment-estimate}
\Lpn{\moment{\alpha}{l}}{r}{\bx} \leq~ \cstMoment{k,l,p} \Lpn{\alpha}{2p}{\bz}^{\frac{2p(k-l)}{pk+(p-1)\ddim}} \Moment{\alpha}{k}^{\frac{lp+\ddim(p-1)}{pk+(p-1)\ddim}}.
\end{equation}
\end{lem}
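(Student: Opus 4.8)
\textit{Proof proposal.} The plan is to reduce \eqref{eqn:moment-estimate} to the classical moment interpolation inequality for a non-negative density. Setting $f \equiv \abs{\alpha}^2 \geq 0$ we have $\moment{\alpha}{l}(\bx) = \int_{\RV}\abs{\bv}^l f(\bx,\bv)\,\intd\bv$, $\Moment{\alpha}{k} = \int_{\RZ}\abs{\bv}^k f(\bz)\,\intd\bz$, and $\Lpn{f}{p}{\bz} = \Lpn{\alpha}{2p}{\bz}^2$. Writing $A \equiv k + \ddim\tfrac{p-1}{p} = \tfrac{pk+\ddim(p-1)}{p}$, it therefore suffices to establish
\[
\Lpn{\moment{\alpha}{l}}{r}{\bx} \leq C\, \Lpn{f}{p}{\bz}^{\frac{k-l}{A}}\, \Moment{\alpha}{k}^{\frac{l+\ddim\frac{p-1}{p}}{A}}
\]
with a constant $C$ depending only on $\ddim,k,l,p$, because substituting $\Lpn{f}{p}{\bz}=\Lpn{\alpha}{2p}{\bz}^2$ and using $A = \tfrac{pk+\ddim(p-1)}{p}$ produces exactly the exponents $\tfrac{2p(k-l)}{pk+(p-1)\ddim}$ and $\tfrac{lp+\ddim(p-1)}{pk+(p-1)\ddim}$ of \eqref{eqn:moment-estimate}. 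The degenerate cases $l=k$ (hence $r=1$) and $k=0$ are immediate, so assume $0\leq l<k$.

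First I would fix $\bx$ and split the velocity integral at an arbitrary radius $R>0$,
\[
\moment{\alpha}{l}(\bx) = \int_{\abs{\bv}\leq R}\abs{\bv}^l f(\bx,\bv)\,\intd\bv + \int_{\abs{\bv}>R}\abs{\bv}^l f(\bx,\bv)\,\intd\bv.
\]
On the inner ball, Hölder in $\bv$ with exponents $p$ and $p'=\tfrac{p}{p-1}$ (with the obvious modification for $p=\infty$), combined with $\int_{\abs{\bv}\leq R}\abs{\bv}^{lp'}\,\intd\bv \leq C R^{lp'+\ddim}$, gives a bound $\leq C R^{l+\ddim/p'}\, g(\bx)^{1/p}$, where $g(\bx) \equiv \int_{\RV} f(\bx,\bv)^p\,\intd\bv$. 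On the outer region, since $l<k$ and $\abs{\bv}>R$ we have $\abs{\bv}^l \leq R^{l-k}\abs{\bv}^k$, so the second term is bounded by $R^{l-k}\moment{\alpha}{k}(\bx)$. Minimizing the resulting bound $C R^{\mu} g(\bx)^{1/p} + R^{-\nu}\moment{\alpha}{k}(\bx)$ over $R>0$, where $\mu = l+\ddim/p'$, $\nu = k-l$, and $\mu+\nu = A$, and using that $\min_{R>0}(aR^\mu + bR^{-\nu})$ is a constant multiple of $a^{\nu/(\mu+\nu)}b^{\mu/(\mu+\nu)}$, yields the pointwise estimate
\[
\moment{\alpha}{l}(\bx) \leq C\, g(\bx)^{\frac{k-l}{pA}}\, \moment{\alpha}{k}(\bx)^{\frac{l+\ddim/p'}{A}}.
\]

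Then I would raise this inequality to the power $r$, integrate in $\bx$, and apply Hölder in $\bx$ with conjugate exponents $q_1 = \tfrac{pA}{r(k-l)}$ and $q_2 = \tfrac{A}{r(l+\ddim/p')}$. The single nontrivial point is that the exponent $r$ of the statement is defined precisely so that these are conjugate, since
\[
\tfrac{1}{q_1}+\tfrac{1}{q_2} = \tfrac{r}{A}\Bigl(l+\tfrac{k-l}{p}+\ddim\tfrac{p-1}{p}\Bigr),
\]
which equals $1$ by the definition of $r$. With this choice the two resulting factors are $\Lpn{g}{1}{\bx}^{\frac{r(k-l)}{pA}} = \Lpn{f}{p}{\bz}^{\frac{r(k-l)}{A}}$ (using $\Lpn{g}{1}{\bx} = \Lpn{f}{p}{\bz}^p$) and $\Moment{\alpha}{k}^{\frac{r(l+\ddim/p')}{A}}$ (using $\int_{\RX}\moment{\alpha}{k}(\bx)\,\intd\bx = \Moment{\alpha}{k}$); taking the $r$-th root recovers the displayed estimate. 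The argument is entirely elementary — the only real work is the exponent bookkeeping — so I would keep the identities $\mu+\nu = A$ and $\tfrac{1}{q_1}+\tfrac{1}{q_2}=1$ explicit in the writeup, since they are exactly what forces the choice of $r$.
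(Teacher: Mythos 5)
Your proposal is correct and takes essentially the same approach as the paper: the paper also splits the $\bv$-integral at a radius $R$, applies Hölder on the inner ball (with $\Lpn{\alpha(\bx,\cdot)}{2p}{\bv}^2$ playing the role of your $g(\bx)^{1/p}$), bounds the tail by $R^{l-k}\moment{\alpha}{k}(\bx)$, optimizes in $R$, and then raises to the $r$-th power and integrates with a second Hölder in $\bx$ using the same conjugate-exponent identity that forces the definition of $r$. Rewriting $\abs{\alpha}^2$ as $f$ is purely notational and does not change the argument.
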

\begin{proof}
Let $\bx$ be arbitrary and call $q\equiv \frac{p}{p-1}$ the conjugated exponent. Then we find for any $R>0$
\begin{align*}
\moment{\alpha}{l}(\bx) \leq& \int_{\ball{\bn}{R}} \abs{\bv}^l \abs{\alpha(\bx,\bv)}^2 \intd\bv + \int_{\RV-\ball{\bn}{R}} \abs{\bv}^l \abs{\alpha(\bx,\bv)}^2 \intd\bv \\
\leq& \Lpn{\alpha(\bx,\cdot)}{2p}{\bv}^2 \left(\int_{\ball{\bn}{R}} \abs{\bv}^{lq}\intd\bv\right)^{\frac 1q} + R^{l-k} \int_{\RV} \abs{\bv}^k \abs{\alpha(\bx,\bv)}^2 \intd\bv \\
\leq& \Lpn{\alpha(\bx,\cdot)}{2p}{\bv}^2 \left(\frac{\surS{\ddim}}{lq+\ddim}\right)^{\frac 1q} R^{l+\frac{\ddim}{q}} + \moment{\alpha}{k}(\bx)~ R^{l-k} \\
\stackrel{R\text{ opt.}}{=}& \underbrace{\left(\left(\frac{k-l}{l+\frac{\ddim}{q}}\right)^{\frac{l+\frac{\ddim}{q}}{k+\frac{\ddim}{q}}} + \left(\frac{l+\frac{\ddim}{q}}{k-l}\right)^{\frac{k-l}{k+\frac{\ddim}{q}}}\right) \left(\frac{\surS{\ddim}}{lq+\ddim}\right)^{\frac{k-l}{kq+\ddim}}}_{\equiv \cstMoment{k,l,p}} \Lpn{\alpha(\bx,\cdot)}{2p}{\bv}^{2\frac{k-l}{k+\frac{\ddim}{q}}} \moment{\alpha}{k}(\bx)^{\frac{l+\frac{\ddim}{q}}{k+\frac{\ddim}{q}}}.
\end{align*}
Now take the $r$-th power and integrate this expression, where the right-hand side is estimated by Hölder inequality once more.
\begin{align*}
\Lpn{\moment{\alpha}{l}^r}{1}{\bx} 
\leq&~ \cstMoment{k,l,p}^r \Lpn{\left(\Lpn{\alpha}{2p}{\bv}^{2p}\right)^{\frac{r}{p} \frac{k-l}{k+\frac{\ddim}{q}}} \moment{\alpha}{k}^{r\frac{l+\frac{\ddim}{q}}{k+\frac{\ddim}{q}}}}{1}{\bx} \\
=&~ \cstMoment{k,l,p}^r \Lpn{\left(\Lpn{\alpha}{2p}{\bv}^{2p}\right)^{\frac{k-l}{k-l+lp+(p-1)\ddim}} \moment{\alpha}{k}^{\frac{lp+\ddim(p-1)}{lp+(p-1)\ddim+k-l}}}{1}{\bx} \\
\leq&~ \cstMoment{k,l,p}^r \Lpn{\alpha}{2p}{\bz}^{\frac{2p(k-l)}{k-l+lp+(p-1)\ddim}} \Moment{\alpha}{k}^{\frac{lp+\ddim(p-1)}{lp+(p-1)\ddim+k-l}}.
\end{align*}
Upon taking the $r$-th root of this expression, we find the asserted inequality
\begin{equation*}
\Lpn{\moment{\alpha}{l}}{r}{\bx} \leq~ \cstMoment{k,l,p} \Lpn{\alpha}{2p}{\bz}^{\frac{2p(k-l)}{pk+(p-1)\ddim}} \Moment{\alpha}{k}^{\frac{lp+\ddim(p-1)}{pk+(p-1)\ddim}}. \quad \qedhere
\end{equation*}
\end{proof}

\begin{prop}[Globality criteria]
\label{prop:globality-criterion}
Let $\alpha\in\Cm{1}{[0,T)\times\RZ;\CN}$ be a solution of the Hamiltonian Vlasov-Poisson equation on a maximal interval of existence with characteristic tuple $(\rho,F,\varphi,K)$ and initial datum $\Bpn{\mathring\alpha}{1,\kappa,2}{\bz}<\infty$. If there is a continuous $h: \RNp \to \RNp$, s.t. either
\begin{enumerate}[label=(\roman*)]
\item $\Lpn{F(t)}{\infty}{\bx} \leq h(t)$ on $t\in[0,T)$,
\item $\Lpn{\rho(t)}{p}{\bx}\leq h(t)$ on $t\in[0,T)$ for some $p\in d\left[1-\frac{1}{\kappa},1\right)$, or
\item $\Moment{\alpha(t)}{k}\leq h(t)$ on $t\in[0,T)$ for some $k\geq \left[\left(1-\frac{1}{\kappa}\right)\ddim-1\right]\ddim \geq \left(\ddim-\frac{3}{2}\right)\ddim$,
\end{enumerate}
then $T=\infty$ and the solution is global.
\end{prop}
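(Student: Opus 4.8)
The plan is the standard continuation argument: show that under each of (i), (ii), (iii) the quantity $t\mapsto\Bpn{\alpha(t)}{1,\kappa,2}{\bz}$ stays bounded on compact subintervals of $[0,T)$, so that the maximality criterion established above leaves no alternative to $T=\infty$. I would arrange this as a chain of reductions (iii)$\Rightarrow$(ii)$\Rightarrow$(i)$\Rightarrow$globality, concentrating the genuine analysis in (ii)$\Rightarrow$(i). For (iii)$\Rightarrow$(ii): since $\abs{\alpha(t,\bz)}=\abs{\mathring\alpha(Z(0,t,\bz))}$ and $Z(0,t,\cdot)$ is a volume-preserving diffeomorphism, every norm $\Lpn{\alpha(t)}{q}{\bz}$ is conserved, in particular $\Lpn{\alpha(t)}{\infty}{\bz}=\Lpn{\mathring\alpha}{\infty}{\bz}$. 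Lemma \ref{lem:moments} with $l=0$, $p=\infty$ then gives $\Lpn{\rho(t)}{(k+\ddim)/\ddim}{\bx}\lesssim\Lpn{\mathring\alpha}{\infty}{\bz}^{2k/(k+\ddim)}\Moment{\alpha(t)}{k}^{\ddim/(k+\ddim)}$, bounded by a continuous function of $t$ under (iii). Since $k\geq[(1-\tfrac1\kappa)\ddim-1]\ddim$ is exactly equivalent to $(k+\ddim)/\ddim\geq\ddim(1-\tfrac1\kappa)$, interpolating with the conserved mass $\Lpn{\rho(t)}{1}{\bx}=\Lpn{\mathring\alpha}{2}{\bz}^2$ yields (ii) for some $p\in\ddim[1-\tfrac1\kappa,1)$ (and if $(k+\ddim)/\ddim\geq\ddim$ the force bound below is only easier).

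For (ii)$\Rightarrow$(i), fix $p\in[\ddim(1-\tfrac1\kappa),\ddim)$ with $\Lpn{\rho(t)}{p}{\bx}\leq h(t)$ and pick $r>\ddim$. Splitting the convolution $F=-\nabla\Gamma*\rho$ at an optimised radius gives $\Lpn{F(t)}{\infty}{\bx}\lesssim\Lpn{\rho(t)}{r}{\bx}^{\frac{r(\ddim-1)}{\ddim(r-1)}}\Lpn{\rho(t)}{1}{\bx}^{\frac{r-\ddim}{\ddim(r-1)}}$, and combining this with the interpolation $\Lpn{\rho(t)}{r}{\bx}\leq\Lpn{\rho(t)}{p}{\bx}^{1-\lambda}\Lpn{\rho(t)}{\infty}{\bx}^{\lambda}$, $\lambda=1-p/r$, together with $\Lpn{\rho(t)}{\infty}{\bx}\leq\minR(\kappa,\ddim)\Apn{\alpha(t)}{\kappa,2}{\bz}^2$ bounds $\Lpn{F(t)}{\infty}{\bx}$ by $C(h(t),\Lpn{\mathring\alpha}{2}{\bz})\,\Apn{\alpha(t)}{\kappa,2}{\bz}^{2\beta}$ with $\beta=\frac{(\ddim-1)(r-p)}{\ddim(r-1)}$. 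Meanwhile the computation behind \eqref{eqn:key-bound}, applied to the solution through its transport formula \eqref{eqn:transport-formula} (with $K$ imaginary) and Lemma \ref{lem:symplectomorphism-difference} with zero comparison force, yields $\Apn{\alpha(t)}{\kappa,2}{\bz}\leq\Apn{\mathring\alpha}{\kappa,2}{\bz}\bigl(1+t+\int_0^t\Lpn{F(\tau)}{\infty}{\bx}e^\tau\intd\tau\bigr)^{\kappa/2}$. Setting $b(t)\equiv\Apn{\alpha(t)}{\kappa,2}{\bz}^{2/\kappa}$ and substituting the force bound turns this into $b(t)\lesssim 1+t+\int_0^t C(\tau)\,b(\tau)^{\kappa\beta}\intd\tau$; as $r\downarrow\ddim$, $\kappa\beta\to\kappa\tfrac{\ddim-p}{\ddim}\leq 1$ exactly because $p\geq\ddim(1-\tfrac1\kappa)$, so for $r$ close enough to $\ddim$ this is a (sub)critical Gronwall--Bihari inequality and $b$ --- hence $\Apn{\alpha(t)}{\kappa,2}{\bz}$ and $\Lpn{F(t)}{\infty}{\bx}$ --- stays bounded by a continuous function on $[0,T)$, which is (i). At the endpoint $p=\ddim(1-\tfrac1\kappa)$ one is forced to $r=\ddim$, where the truncated kernel is only log-divergent in $\Lpn{\cdot}{\ddim/(\ddim-1)}{}$; a logarithmically refined force estimate then replaces $b^{\kappa\beta}$ by $1+\ln_+ b$ and the same conclusion holds.

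Finally, (i)$\Rightarrow$globality: assume $\Lpn{F(t)}{\infty}{\bx}\leq h(t)$ on $[0,T)$ and, for contradiction, $T<\infty$. Then $\int_0^t\Lpn{F(\tau)}{\infty}{\bx}e^\tau\intd\tau$ is finite on $[0,T)$, so the displayed estimate bounds $\Apn{\alpha(t)}{\kappa,2}{\bz}$ there by a locally bounded function. I would now re-run the a priori estimates of steps (v)--(viii) in the proof of Lemma \ref{lem:local-solution}, but with the self-referential bound $\globalBound{\alpha}{\kappa,2}(t)$ replaced by this genuine finite bound; as in Corollary \ref{cor:bounds}, the fact that $\alpha$ together with its characteristic tuple is a fixed point of the iteration converts every inductive Gronwall argument there into an ordinary one, valid on all of $[0,T)$. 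This produces, successively, finite continuous bounds on $\Lpn{\rho(t)}{\infty}{\bx}$, $\Lpn{\rho(t)}{1}{\bx}$, $\Lpn{Z(s,t)-\bar{Z}(s,t)}{\infty}{\bz}$, $\Lpn{\naX F(t)}{\infty}{\bx}$ (the log-Lipschitz step), $\Lpn{\naX\rho(t)}{\infty}{\bx}$, $\Lpn{\naZ Z(s,t)}{\infty}{\bz}$, then on $\varphi$, $K$ and their $\bx$-derivatives, and finally on $\Apn{\naZ\alpha(t)}{\kappa,2}{\bz}$; altogether $\Bpn{\alpha(t)}{1,\kappa,2}{\bz}$ is bounded on $[0,T)$, contradicting the maximality criterion, so $T=\infty$.

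The main obstacle is the step (ii)$\Rightarrow$(i): one must track the exponents in the interpolation and force estimate carefully and verify that the integral inequality for $\Apn{\alpha(t)}{\kappa,2}{\bz}$ becomes (sub)critical precisely in the asserted range $p\geq\ddim(1-\tfrac1\kappa)$, the endpoint requiring the logarithmic refinement of the Newtonian estimate; this is also where the somewhat unusual threshold on $k$ in (iii) originates. By comparison, (i)$\Rightarrow$globality is conceptually routine but bookkeeping-heavy --- it is the bootstrap of Lemma \ref{lem:local-solution} repeated once the ``hard'' norm $\Apn{\alpha}{\kappa,2}{\bz}$ has been controlled from the outside, which removes the closure condition that had limited the time of existence.
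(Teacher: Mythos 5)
Your overall plan — (iii)$\Rightarrow$(ii)$\Rightarrow$(i)$\Rightarrow$globality, with the a priori bound on $\Apn{\alpha(t)}{\kappa,2}{\bz}$ fed back into the bootstrap of Lemma \ref{lem:local-solution} to close with the Maximality Criterion — is exactly the paper's. The reduction (iii)$\Rightarrow$(ii) via Lemma \ref{lem:moments} is also the paper's; fixing $p=\infty$ there and using the conserved $\Lpn{\mathring\alpha}{\infty}{\bz}$ is just a specialisation of the paper's ``adjust $1\leq q\leq\infty$'' step.

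The genuine difference (and the gap) is in (ii)$\Rightarrow$(i). You re-derive a force bound by splitting the convolution at a radius with an auxiliary exponent $r>\ddim$, interpolating $\Lpn{\rho}{r}{\bx}$ between $\Lpn{\rho}{p}{\bx}$ and $\Lpn{\rho}{\infty}{\bx}$, and observe that the resulting Gronwall exponent $\kappa\beta$ only reaches the sharp value $\kappa\bigl(1-\tfrac p\ddim\bigr)$ in the limit $r\downarrow\ddim$, where the split-kernel constant diverges. You then announce a ``logarithmically refined force estimate'' without supplying it. But this detour is unnecessary and the log issue is an artefact of the route: the paper uses \eqref{eqn:DU-estimate} directly, i.e.
$\Lpn{F}{\infty}{\bx}\leq \cstNewtonLp{p}\,\Lpn{\rho}{p}{\bx}^{p/\ddim}\Lpn{\rho}{\infty}{\bx}^{1-p/\ddim}$, which is valid with a finite constant for every $p\in[1,\ddim)$ and, combined with $\Lpn{\rho}{\infty}{\bx}\leq\minR(\kappa,\ddim)\Apn{\alpha}{\kappa,2}{\bz}^2$ and \eqref{eqn:key-bound}, gives the integral inequality with exponent exactly $\kappa\bigl(1-\tfrac p\ddim\bigr)$ in one step. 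Since $p\geq\ddim\bigl(1-\tfrac1\kappa\bigr)$ makes this exponent $\leq 1$, the endpoint is a plain linear Gronwall — no sublinear Bihari manipulation and no log refinement is needed, and no auxiliary $r$ ever appears. As written, your (ii)$\Rightarrow$(i) is therefore incomplete at precisely the endpoint the statement is calibrated to hit; replacing the $r$-split by the potential estimate \eqref{eqn:DU-estimate} closes it. The (i)$\Rightarrow$globality bootstrap you describe is the paper's and is correct.
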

\begin{proof}
The idea is to replace the key bound $\globalBound{\alpha}{\kappa,2}$ in Lemma \ref{lem:local-solution} by a continuous function that remains finite for finite times. Therefore, we need to manipulate the estimate \eqref{eqn:key-bound}:
\begin{align*}
\Apn{\alpha(t)}{\kappa,2}{\bz} \stackrel{\eqref{eqn:key-bound}}{\leq}& \left(1+t+\int_{0}^{t} \Lpn{F(\tau)}{\infty}{\bx} e^\tau\intd\tau\right)^{\frac{\kappa}{2}} \Apn{\mathring\alpha}{\kappa,2}{\bz} \\
\leq& \left(1+t+\int_{0}^{t} \cstNewtonLp{p} \Lpn{\rho(\tau)}{p}{\bx}^{\frac{p}{\ddim}} \Lpn{\rho(\tau)}{\infty}{\bx}^{1-\frac{p}{\ddim}} e^\tau\intd\tau\right)^{\frac{\kappa}{2}} \Apn{\mathring\alpha}{\kappa,2}{\bz} \\
\leq& \left(1+t+\int_{0}^{t} \cstNewtonLp{p} \minR(\kappa,\ddim) \Lpn{\rho(\tau)}{p}{\bx}^{\frac{p}{\ddim}} e^\tau \Apn{\alpha(\tau)}{\kappa,2}{\bx}^{2\left(1-\frac{p}{\ddim}\right)}\intd\tau\right)^{\frac{\kappa}{2}} \Apn{\mathring\alpha}{\kappa,2}{\bz}.
\end{align*}
From the first inequality, (i) is obvious, from the last one, we see indeed, whenever $p\in\ddim\left[1-\frac{1}{\kappa},1\right)$ and (ii) holds, a Gronwall argument proves the existence of $g\in\Cm{0}{\RNp;\RNp}$, s.t., $\Apn{\alpha(t)}{\kappa,2}{\bz} \leq g(t)$. By replacing the general bound $\globalBound{\alpha}{\kappa,2}$ with the specific upper bound $g$ in the proof of Lemma \ref{lem:local-solution}, one infers that even $\Apn{\naZ\alpha(t)}{\kappa,2}{\bz}$ and therefore $\Bpn{\alpha(t)}{1,\kappa,2}{\bz}$ remains bounded on any bounded interval. By the Maximality Criterion \ref{prop:globality-criterion}, the solution is global.

If we now assume (iii), we can reduce this to the condition (i) by Lemma \ref{lem:moments}, i.e.,
\begin{align*}
\Lpn{\rho(t)}{p}{\bx} &= \Lpn{\moment{\alpha(t)}{0}}{p}{\bx} \stackrel{\text{Lem.\ref{lem:moments}}}{\leq} \cstMoment{k,l,q} \Lpn{\alpha(t)}{2q}{\bz}^{\frac{2qk}{qk+(q-1)\ddim}} \Moment{\alpha(t)}{k}^{\frac{(q-1)\ddim}{qk+(q-1)\ddim}} \\
&\leq \cstMoment{k,l,q} \Lpn{\mathring{\alpha}}{2q}{\bz}^{\frac{2qk}{qk+(q-1)\ddim}} \Moment{\alpha(t)}{k}^{\frac{(q-1)\ddim}{qk+(q-1)\ddim}},
\end{align*}
given that
\begin{align*}
\left(1-\frac{1}{\kappa}\right)\ddim \leq p = \frac{qk + (q-1)\ddim}{k+(q-1)\ddim} < \ddim.
\end{align*}
By adjusting $1\leq q\leq\infty$, we find that if $\Moment{\alpha(t)}{k}$ is bounded for some fixed $k\geq0$, then so is $\Lpn{\rho(t)}{p}{\bx}$ for any $1\leq p\leq 1+\frac{k}{\ddim}$. In order to satisfy the condition (ii), we find the natural restriction for $k$
\begin{equation*}
\left(1-\frac{1}{\kappa}\right)\ddim \leq 1+\frac{k}{\ddim}. \quad\qedhere
\end{equation*}
\end{proof}

\begin{thm}[Pfaffelmoser-Schaeffer adaptation]
\label{thm:pfaffelmoser-schaeffer-adaption}
Let $\ddim=3$, $\kappa\geq 2\ddim=6$ be fixed and $\mathring\alpha\in\Bp{1,\kappa,2}{\bz}$ be an initial datum, that in addition satisfies
\begin{enumerate}[label=(\roman*)]
\item $\Apn{\Lpn{\mathring{\alpha}}{\infty}{\bx}}{\ddim,2}{\bv}<\infty$,
\item $\exists\lambda\geq\ddim: \Apn{\Lpn{\naZ\mathring{\alpha}}{\infty}{\bx}}{\lambda,2}{\bv}<\infty$, and
\item $\Moment{\mathring{\alpha}}{2}<\infty$.
\end{enumerate}
Then the maximal solution for this initial datum is global.
\end{thm}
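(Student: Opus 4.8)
The plan is to reduce to the classical (gravitational) Vlasov--Poisson system and to feed the resulting a~priori field bound into Proposition~\ref{prop:globality-criterion}. Let $\alpha$ be the maximal solution on $[0,T)$ with characteristic tuple $(\rho,F,\varphi,K)$. Since $K$ is imaginary--valued, the transport formula \eqref{eqn:transport-formula} gives $\abs{\alpha(t,\bz)}=\abs{\mathring\alpha(Z(0,t,\bz))}$ (cf.\ \eqref{eqn:abs-preservation}), so by Proposition~\ref{prop:hvl-solves-vl} (specialized to Vlasov--Poisson) the function $f(t)\equiv\abs{\alpha(t)}^2$ is the classical Vlasov--Poisson solution with datum $\mathring f\equiv\abs{\mathring\alpha}^2$, and $\rho(t)=\int_\RV f(t)\,\intd\bv$, $F(t)=-\convol{\nabla\Gamma}{\rho(t)}$ are precisely its mass density and self--consistent force. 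Consequently it suffices to exhibit a continuous $h:\RNp\to\RNp$ with $\Lpn{F(t)}{\infty}{\bx}\le h(t)$ on $[0,T)$: by Proposition~\ref{prop:globality-criterion}-(i) this already yields $T=\infty$. (Indeed, \eqref{eqn:key-bound} then bounds $\Apn{\alpha(t)}{\kappa,2}{\bz}$ by a locally bounded function, and re--running the estimates of Lemma~\ref{lem:local-solution} --- as legitimized for genuine solutions by Corollary~\ref{cor:bounds} --- with $\globalBound{\alpha}{\kappa,2}$ replaced by this bound keeps $\Bpn{\alpha(t)}{1,\kappa,2}{\bz}$ finite on every bounded subinterval, so the solution cannot blow up in finite time.)

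It remains to verify that $\mathring f$ meets the hypotheses under which the classical Pfaffelmoser--Schaeffer / Lions--Perthame theorem \cite{pfaffelmoser,schaeffer,lionsperthame} --- in the non--compactly--supported form, e.g.\ as presented in \cite{rein} --- produces such an $h$. From $\mathring\alpha\in\Bp{1,\kappa,2}{\bz}$ we get $\mathring f\in\Cp{1}{\bz}$ with $\mathring f,\naZ\mathring f\in\Lp1{}\cap\Lp\infty{}$ and, by \eqref{eqn:lp-preservation}, $\Lpn{f(t)}{p}{\bz}=\Lpn{\mathring\alpha}{2p}{\bz}^2$ conserved along the flow. Hypothesis (i) encodes that $\bv\mapsto\Lpn{\mathring f(\cdot,\bv)}{\infty}{\bx}$ is bounded and decays in $\bv$ (in the integrable, non--concentrated sense built into the $\Ap{}{}$--spaces), uniformly in $\bx$; hypothesis (ii) gives the analogous decay of $\Lpn{\naZ\mathring f(\cdot,\bv)}{\infty}{\bx}$; hypothesis (iii) is $\Moment{\mathring f}{2}=\Moment{\mathring\alpha}{2}<\infty$. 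Up to the precise decay rate, these are exactly the boundedness, $\bv$--decay, $\Cp1{}$--regularity and finite--kinetic--energy assumptions classically imposed on a Vlasov--Poisson datum.

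For completeness I would then recall how the two stages of that argument specialize here. First, since the interaction is attractive one controls the second moment: interpolating the potential energy against $\Lpn{\rho(t)}{5/3}{\bx}$ --- bounded, via Lemma~\ref{lem:moments}, by the conserved $\Lp{p}{}$--norms of $\alpha$ and by $\Moment{f(t)}{2}$ --- and using conservation of $\HF$ gives a closed Gronwall inequality, so $t\mapsto\Moment{f(t)}{2}$ is continuous and at most polynomially growing on $[0,T)$. Second, the Pfaffelmoser velocity--growth estimate, run on $\sup_\bx f(t,\bx,\bv)$ rather than on a compact support and using the $\bv$--decay from (i)--(ii), controls the growth of high velocities and hence of $\Lpn{\rho(t)}{\infty}{\bx}$; together with $\Lpn{\rho(t)}{1}{\bx}=\Lpn{\mathring\alpha}{2}{\bz}^2$ and the Newtonian estimate of Lemma~\ref{lem:newtonian-ineq}, $\Lpn{F(t)}{\infty}{\bx}\le \cstNewtonLp{1}\Lpn{\rho(t)}{1}{\bx}^{1/\ddim}\Lpn{\rho(t)}{\infty}{\bx}^{1-1/\ddim}=:h(t)$ with $h$ continuous, which closes the argument by the first paragraph.

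The main obstacle is this second stage: the Pfaffelmoser iteration is classically written for compactly supported data, and one must check that it survives the weaker, merely integrable--local--supremum decay of $\sup_\bx\abs{\mathring\alpha(\bx,\cdot)}$ furnished by (i)--(ii), keeping careful track of how much $\bv$--decay (equivalently, which weight in the $\Ap{}{}$--conditions, and why $\lambda\ge\ddim$ suffices in (ii)) the characteristic analysis consumes and of the extra $\bx$--dependence introduced by non--compact support. Everything downstream --- converting the field bound into a $\Bp{1,\kappa,2}{\bz}$ bound and then into globality --- is the bookkeeping already assembled in Lemmata~\ref{lem:local-lipschitz}--\ref{lem:local-solution} and Proposition~\ref{prop:globality-criterion}.
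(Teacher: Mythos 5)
Your overall strategy coincides with the paper's: pass to $f=\abs{\alpha}^2$, obtain a continuous bound on $\Lpn{F(t)}{\infty}{\bx}$, and conclude via Proposition~\ref{prop:globality-criterion}-(i). The trouble is that you leave the crucial middle step as an acknowledged ``main obstacle,'' on the belief that the Pfaffelmoser iteration is ``classically written for compactly supported data'' and would need to be re-derived under the weaker decay carried by conditions (i)--(ii). That is precisely where the proposal fails to close. The cited work \cite{pfaffelmoser} is explicitly about \emph{general} (non-compactly-supported) initial data; conditions (i)--(iii) of the theorem are tailored so that $\mathring f\equiv\abs{\mathring\alpha}^2$ satisfies the hypotheses [Gen.Ass.1] there --- (i) provides the required $\bv$-decay of $\sup_{\bx}\mathring f(\bx,\cdot)$, (ii) the analogous decay of $\sup_{\bx}\abs{\naZ\mathring f(\bx,\cdot)}$ (which is why $\lambda\geq\ddim$ suffices), and (iii) the finite kinetic energy. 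With [Gen.Ass.1] verified, [Thm.19] together with [Lem.10] of \cite{pfaffelmoser} already delivers a continuous $h$ with $\Lpn{F(t)}{\infty}{\bx}\leq h(t)$ on the maximal interval, and your own first paragraph then finishes the argument. No re-run of the velocity-growth iteration is needed; the citation supplies it.

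So the gap is not conceptual but one of closure: you correctly identify the reduction and the target estimate, but instead of verifying that $\mathring f$ meets the hypotheses of the already-available general-data result, you attempt to re-sketch that result and stop short. Your interim remarks (energy interpolation against $\Lpn{\rho}{5/3}{\bx}$, the subtlety of the attractive sign, etc.) are reasonable background but are neither completed nor required here; the proof is a verification-plus-citation, not a re-derivation.
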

\begin{proof}
Let $\alpha$ be the maximal solution for the given initial datum and $f\equiv\abs{\alpha}^2$ its corresponding solution of the classical Vlasov-Poisson equation. It is easily seen that $\mathring{f}\equiv\abs{\mathring{\alpha}}^2$ then satisfies the conditions given in \cite[Gen.Ass.1]{pfaffelmoser}. \cite[Thm.19]{pfaffelmoser} in combination with \cite[Lem.10]{pfaffelmoser} proves that there is $h\in\Cm{0}{\RNp;\RNp}$, s.t., $\Lpn{F(t)}{\infty}{\bx}\leq h(t)$, yielding globality of $\alpha$ by Proposition \ref{prop:globality-criterion}-(i). \qedhere
\end{proof}

\begin{thm}[Lions-Perthame adaptation]
Let $\ddim=3$ be fixed, $\kappa\geq 2\ddim=6$, and $\mathring{\alpha}\in\Bp{1,\kappa,2}{\bz}$ be an initial datum, s.t. in addition $\Moment{\mathring{\alpha}}{k}<\infty$ for some $k>9\left(1-\frac{1}{\kappa}\right)-3\geq\frac{9}{2}$. Then the maximal solution for this initial datum is global.
\end{thm}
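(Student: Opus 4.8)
The plan is to mimic the previous theorem, reducing globality to Proposition~\ref{prop:globality-criterion}-(iii), but now feeding in the velocity-moment propagation of Lions and Perthame \cite{lionsperthame} in place of the Pfaffelmoser--Schaeffer force bound. Let $\alpha$ be the maximal solution on $[0,T)$ for the given initial datum and put $f\equiv\abs{\alpha}^2$. By Proposition~\ref{prop:hvl-solves-vl}, $f$ is a local solution of the classical Vlasov--Poisson equation, and since the local theory keeps $\alpha(t)\in\Bp{1,\kappa,2}{\bz}$ for each $t<T$, $f(t)$ is a nonnegative $\Cp{1}{\bz}$ function in $\Lp{1}{\bz}\cap\Lp{\infty}{\bz}$ with $\rho(t)\in\Lp{1}{\bx}\cap\Lp{\infty}{\bx}$; in particular $f$ is admissible for the Lions--Perthame a priori estimates.

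First I would verify the Lions--Perthame hypotheses for $\mathring f\equiv\abs{\mathring\alpha}^2$. Membership $\mathring\alpha\in\Bp{1,\kappa,2}{\bz}$ yields $\mathring f\in\Lp{1}{\bz}\cap\Lp{\infty}{\bz}$, nonnegativity is automatic, and $\Moment{\mathring\alpha}{k}<\infty$ is exactly finiteness of the $k$-th velocity moment $\int_{\RZ}\abs{\bv}^k\mathring f$. Because
\[
k>9\Bigl(1-\tfrac1\kappa\Bigr)-3\geq\tfrac92>3=\ddim ,
\]
the moment order comfortably exceeds the threshold in \cite{lionsperthame}, so their moment-propagation estimate applies to $f$: on every compact subinterval of $[0,T)$ the velocity moments up to order $k$ stay bounded, i.e.\ there is $h\in\Cm{0}{\RNp;\RNp}$ and an order $k'$ with $9(1-\tfrac1\kappa)-3\leq k'\leq k$ such that
\[
\Moment{\alpha(t)}{k'}=\int_{\RZ}\abs{\bv}^{k'}\,\abs{\alpha(t,\bz)}^2\,\intd\bz\leq h(t)\qquad\text{for every }t\in[0,T);
\]
the strict inequality in the hypothesis on $k$ provides exactly the slack needed in case the propagation loses an endpoint.

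Then I would close the argument with Proposition~\ref{prop:globality-criterion}. In dimension $\ddim=3$ the threshold in part~(iii) is $[(1-\tfrac1\kappa)\ddim-1]\ddim=9(1-\tfrac1\kappa)-3$, so the bound on $\Moment{\alpha(t)}{k'}$ with $k'\geq 9(1-\tfrac1\kappa)-3$ places us directly in case~(iii). Equivalently, by Lemma~\ref{lem:moments} with $l=0$ and a suitable conjugate exponent, boundedness of $\Moment{\alpha(t)}{k'}$ gives $\Lpn{\rho(t)}{p}{\bx}\leq\tilde h(t)$ for every $1\leq p\leq 1+\tfrac{k'}{\ddim}$, and $1+\tfrac{k'}{\ddim}\geq(1-\tfrac1\kappa)\ddim$ by the choice of $k'$, so case~(ii) applies. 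Either way Proposition~\ref{prop:globality-criterion} yields $T=\infty$.

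The abstract reduction is immediate; the main obstacle is the careful invocation of \cite{lionsperthame}: one must confirm that the classical solution $f=\abs{\alpha}^2$ produced here --- which is only $\Cp{1}{}$, need not be compactly supported, and carries just the decay encoded in $\Bp{1,\kappa,2}{\bz}$ --- lies in the class for which Lions and Perthame establish their moment bounds, and that no regularity of $\mathring f$ beyond $\Lp{1}{}\cap\Lp{\infty}{}$ with a single finite high moment is needed. Since Lions--Perthame is precisely the theory that dispenses with compact support in $\bv$, I expect this to go through after matching notation, with the only real bookkeeping being the $\kappa$-dependent chain tying the moment order $k>9(1-\tfrac1\kappa)-3$ to the $\Lp{p}{\bx}$-integrability of $\rho$ required by Proposition~\ref{prop:globality-criterion}.
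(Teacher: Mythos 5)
Your argument is correct and follows the paper's own proof essentially verbatim: pass to $f=\abs{\alpha}^2$, invoke \cite[Thm.1]{lionsperthame} to propagate the $k$-th velocity moment, and close with Proposition~\ref{prop:globality-criterion}-(iii). The extra hedging about a reduced order $k'$ is unnecessary (the hypothesis $k>9(1-\tfrac1\kappa)-3$ already satisfies case~(iii) directly, and $k>3$ suffices for the Lions--Perthame propagation), but it does no harm.
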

\begin{proof}
Let $\alpha$ be the maximal solution for the given initial datum and $f\equiv\abs{\alpha}^2$ be the corresponding classical solution of the Vlasov-Poisson system. \cite[Thm.1]{lionsperthame} proves that there is $h\in\Cm{0}{\RNp;\RNp}$, s.t. $\Moment{\alpha(t)}{k}\leq h(t)$, yielding globality for $\alpha$ by Proposition \ref{prop:globality-criterion}-(iii). \qedhere
\end{proof}

\begin{rmk}
\textbf{(i) Pfaffelmoser-Schaeffer.} While the relation between local-$\sup$ integrability conditions given in \cite[Gen.Ass.1]{pfaffelmoser} and the conditions (i), (ii) in Theorem \ref{thm:pfaffelmoser-schaeffer-adaption} are very close, it is worth noting the subtle difference. While $\alpha\in\Bp{1,\kappa,2}{\bz}$ only requires the integrability of the local supremum on balls in $\RZ$, conditions in \cite[Gen.Ass.1]{pfaffelmoser} require integrability of the local supremum on balls in $\RV$ and the full space $\RX$. Hence (i), (ii) are true restrictions, that are not genuinely fulfilled for $\alpha\in\Bp{1,\kappa,2}{\bz}$. \\

\noindent\textbf{(ii) Lions-Perthame.} In the Vlasov picture $\mathring{f}=\abs{\mathring{\alpha}}^2$, \cite[Thm.1]{lionsperthame} provides the global extension of solutions under the sole restriction of $\Moment{\mathring{\alpha}}{k}<\infty$ for some $k>3$. On the other hand, uniqueness of $f$ is only established for the case of $k>6$, but on a much larger class of solutions, essentially satisfying $\rho\in\Lp{\infty}{t,\text{loc}}\Lp{\infty}{\bx}$. We should remark, that any $\mathring{f}$ derived from $\mathring{\alpha}\in\Bp{1,\kappa,2}{\bz}$ is in $\Bp{1,\kappa,1}{\bz}$ and indeed satisfies all the regularity and integrability conditions of \cite[Thm.6]{lionsperthame}. Therefore on the Vlasov level, the solution $f$ is unique in that much larger class, as long as $k>6$. Nevertheless, as the map $\mathring{\alpha}\mapsto\abs{\mathring\alpha}^2$ is not one to one, the results are not instantly transferable to the Hamiltonian Vlasov setup.
\end{rmk}

\section{Acknowledgements}

I like to thank Markus Kunze for his support and many useful discussions and hints as well as Antti Knowles for a short discussion during his visit to our department. Also I like to thank the referee for his detailed and insightful comments on the paper.

\appendix

\section{Functions with integrable local supremum}

\label{chap:integrable-local-supremum}

This section provides some useful properties of the spaces $\Bp{k,\kappa,p}{\bz}$ as given in Definition \ref{defn:loc-sup-Lp}. As it will be useful for many estimates throughout this chapter, we define for any $a\geq b$ the constant
\begin{equation}
\minR(a,b) \equiv \inf_{R>0} \frac {(1+R)^a} {\volS{b} R^b} = \frac{1}{\volS{b}}\frac{a^a}{b^b} (a-b)^{b-a}.
\end{equation}

\begin{lem}
\label{lem:Ap}
For $d\in\NN$, $p\geq1$, and $\kappa\geq d$, we have:
\begin{enumerate}[label=(\roman*)]
\item $\Cp{0}{c} \subseteq \Ap{\kappa,p}{} \subseteq \Lp{p}{}\cap\Lp{\infty}{}$.
\item If $f\in\Lp{p}{}$ and $\nabla f\in\Ap{\kappa,p}{}$, then $f\in\Ap{\kappa+p,p}{}$.
\item For $\kappa\leq\lambda$, $\Apn{f}{\lambda,p}{}\leq \Apn{f}{\kappa,p}{}$. Hence, $\Ap{\kappa,p}{}\subseteq\Ap{\lambda,p}{}$.
\item For $\frac{1}{p}+\frac{1}{q}=1$, $\Apn{f\cdot g}{\frac{\kappa}{p}+\frac{\lambda}{q},1}{}\leq \Apn{f}{\kappa,p}{}\Apn{g}{\lambda,q}{}$, i.e., a generalized Hölder inequality holds.
\end{enumerate}
\end{lem}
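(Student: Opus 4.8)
The plan is to dispatch the four claims independently. Parts (iii) and (iv) are read straight off the definition \eqref{eqn:Ap}. For (iii), since $1+R\geq1$ and $\kappa\leq\lambda$ one has $(1+R)^{-\lambda/p}\leq(1+R)^{-\kappa/p}$ for every $R\geq0$, so the supremum defining $\Apn{f}{\lambda,p}{}$ is dominated term by term by that defining $\Apn{f}{\kappa,p}{}$. For (iv), I would use submultiplicativity of the inner supremum, $\sup_{\abs{\bar{\bz}}\leq R}\abs{(fg)(\bz+\bar{\bz})}\leq\bigl(\sup_{\abs{\bar{\bz}}\leq R}\abs{f(\bz+\bar{\bz})}\bigr)\bigl(\sup_{\abs{\bar{\bz}}\leq R}\abs{g(\bz+\bar{\bz})}\bigr)$, then Hölder's inequality in $\bz$ with exponents $p$ and $q$, and finally split the weight as $(1+R)^{-(\kappa/p+\lambda/q)}=(1+R)^{-\kappa/p}(1+R)^{-\lambda/q}$, so the two resulting factors are bounded by $\Apn{f}{\kappa,p}{}$ and $\Apn{g}{\lambda,q}{}$ before taking the supremum over $R$.

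For (i), the inclusion $\Cp{0}{c}\subseteq\Ap{\kappa,p}{}$ follows because, if $\supp f\subseteq\ball{\bn}{\rho_{0}}$, the inner integrand $\sup_{\abs{\bar{\bz}}\leq R}\abs{f(\bz+\bar{\bz})}^{p}$ is bounded by $\Lpn{f}{\infty}{}^{p}$ and supported in $\ball{\bn}{\rho_{0}+R}$, giving $(1+R)^{-\kappa/p}\bigl(\int\cdots\bigr)^{1/p}\leq\Lpn{f}{\infty}{}\,\bigl(\volS{d}(\rho_{0}+R)^{d}\bigr)^{1/p}(1+R)^{-\kappa/p}$, which is bounded in $R\geq0$ since $\kappa\geq d$. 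Testing \eqref{eqn:Ap} at $R=0$ gives $\Lpn{f}{p}{}\leq\Apn{f}{\kappa,p}{}$, hence $\Ap{\kappa,p}{}\subseteq\Lp{p}{}$. For $\Ap{\kappa,p}{}\subseteq\Lp{\infty}{}$ I would use the ball-averaging estimate: for a.e.\ $\bz_{0}$ and any $R>0$,
\[
\abs{f(\bz_{0})}^{p}\,\volS{d}R^{d}=\int_{\ball{\bz_{0}}{R}}\abs{f(\bz_{0})}^{p}\,\intd\bz\leq\int_{\ball{\bz_{0}}{R}}\sup_{\abs{\bar{\bz}}\leq R}\abs{f(\bz+\bar{\bz})}^{p}\,\intd\bz\leq(1+R)^{\kappa}\Apn{f}{\kappa,p}{}^{p},
\]
so that optimising in $R$ yields $\Lpn{f}{\infty}{}^{p}\leq\minR(\kappa,d)\,\Apn{f}{\kappa,p}{}^{p}$.

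Part (ii) is the one needing a real idea. From (i), $\nabla f\in\Ap{\kappa,p}{}\subseteq\Lp{p}{}$, so $f$ has a weak gradient in $\Lp{p}{}$ and hence a representative, still written $f$, along almost every segment of which the one-dimensional fundamental theorem of calculus holds; averaging it over $\by\in\ball{\bz+\bar{\bz}}{R}$ gives, for any $R>0$ and a.e.\ $\bz,\bar{\bz}$,
\[
f(\bz+\bar{\bz})=\frac{1}{\volS{d}R^{d}}\int_{\ball{\bz+\bar{\bz}}{R}}f(\by)\,\intd\by+\frac{1}{\volS{d}R^{d}}\int_{\ball{\bz+\bar{\bz}}{R}}\int_{0}^{1}\nabla f\bigl(\by+s(\bz+\bar{\bz}-\by)\bigr)\cdot(\bz+\bar{\bz}-\by)\,\intd s\,\intd\by .
\]
For $\abs{\bar{\bz}}\leq R$ the first term is at most $\frac{1}{\volS{d}R^{d}}\int_{\ball{\bz}{2R}}\abs{f(\by)}\,\intd\by$, a function of $\bz$ alone whose $\Lp{p}{\bz}$-norm is $\leq2^{d}\Lpn{f}{p}{}$ by Young's inequality for the convolution $\abs{f}\ast\unity_{\ball{\bn}{2R}}$; the second term is $\leq R\sup_{\abs{\bar{\bw}}\leq2R}\abs{\nabla f(\bz+\bar{\bw})}$ because the argument of $\nabla f$ stays within distance $2R$ of $\bz$. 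Taking $\sup_{\abs{\bar{\bz}}\leq R}$, then the $\Lp{p}{\bz}$-norm, and inserting the definition of $\Apn{\nabla f}{\kappa,p}{}$ with radius $2R$, one reaches
\[
\Bigl(\int_{\RN^{d}}\sup_{\abs{\bar{\bz}}\leq R}\abs{f(\bz+\bar{\bz})}^{p}\,\intd\bz\Bigr)^{1/p}\leq2^{d}\Lpn{f}{p}{}+R(1+2R)^{\kappa/p}\Apn{\nabla f}{\kappa,p}{};
\]
multiplying by $(1+R)^{-(\kappa+p)/p}$, using $1+2R\leq2(1+R)$ and $R\leq1+R$, and taking the supremum over $R\geq0$ then gives $\Apn{f}{\kappa+p,p}{}\leq2^{d}\Lpn{f}{p}{}+2^{\kappa/p}\Apn{\nabla f}{\kappa,p}{}<\infty$.

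The main obstacle I anticipate is precisely the averaging term in (ii): one must resist pulling $\Lpn{f}{p}{}$ out of the $\bz$-integral — that would diverge — and instead recognise a convolution controlled by Young's inequality, and one must check that the gradient in the Taylor remainder is sampled only on $\ball{\bz}{2R}$, which is exactly what ties the estimate to $\Apn{\nabla f}{\kappa,p}{}$ rather than to a global quantity. Everything else is routine bookkeeping with the weights $(1+R)$ and with Hölder/Minkowski.
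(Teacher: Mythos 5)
Your proof is correct in all four parts. Parts (i), (iii), and (iv) track the paper's argument essentially verbatim: (iii) by monotonicity of the weight $(1+R)^{-\kappa/p}$, (iv) by sub-multiplicativity of the ball supremum plus Hölder, and (i) by compact support, the $R=0$ test for the $\Lp{p}{}$ inclusion, and the ball-averaging bound $\Lpn{f}{\infty}{}^p\leq\minR(\kappa,d)\Apn{f}{\kappa,p}{}^p$ for the $\Lp{\infty}{}$ inclusion.

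For (ii) you take a genuinely different route. The paper uses the elementary pointwise estimate
\begin{equation*}
\sup_{\abs{\bar{\bz}}\leq R}\abs{f(\bz+\bar{\bz})}\leq \abs{f(\bz)}+R\sup_{\abs{\bar{\bz}}\leq R}\abs{\nabla f(\bz+\bar{\bz})},
\end{equation*}
obtained by integrating $\nabla f$ along the segment from $\bz$ to $\bz+\bar{\bz}$, and then a single Minkowski step in $\Lp{p}{\bz}$ to land directly on $\Apn{f}{\kappa+p,p}{}\leq\Lpn{f}{p}{}+\Apn{\nabla f}{\kappa,p}{}$. You instead represent $f(\bz+\bar{\bz})$ as its ball average plus an averaged Taylor remainder, bound the average via Young's inequality for $\abs{f}\ast\unity_{\ball{\bn}{2R}}$, and control the remainder by sampling $\nabla f$ only on $\ball{\bz}{2R}$. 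That works and gives $\Apn{f}{\kappa+p,p}{}\leq 2^d\Lpn{f}{p}{}+2^{\kappa/p}\Apn{\nabla f}{\kappa,p}{}$. Your motivation seems to be caution about applying the fundamental theorem of calculus pointwise when $f$ is a priori only an $\Lp{p}{}$ equivalence class; the averaging neatly sidesteps this. But the precaution is not actually necessary: since $\nabla f\in\Ap{\kappa,p}{}\subseteq\Lp{\infty}{}$ by part (i), $f$ has a Lipschitz representative along which the segment formula holds everywhere, so the paper's shorter argument is fully rigorous and gives sharper constants. Both are correct; the paper's is the more economical.
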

\begin{proof}
\textbf{(i).} Let $f\in\Cp{0}{c}$ have compact support. Then there is $R_0>0$, s.t. $\supp f\subseteq\ball{\bn}{R_0}$. Thus,
\begin{align*}
\Apn{f}{\kappa,p}{} =& \sup_{R\geq 0} (1+R)^{-\frac{\kappa}{p}} \left(\int_{\RN^d} \sup_{\abs{\bar{\bz}}\leq R} \abs{f(\bz+\bar{\bz})}^p~\intd\bz\right)^{\frac 1p} \\
\leq& \Lpn{f}{\infty}{} \sup_{R\geq 0} (1+R)^{-\frac{\kappa}{p}} \left(\int_{\RN^d} \sup_{\abs{\bar{\bz}}\leq R} \unity_{\ball{\bn}{R_0}}(\bz+\bar{\bz})\intd\bz \right)^{\frac{1}{p}} \\
=& \Lpn{f}{\infty}{} \sup_{R\geq 0} (1+R)^{-\frac{\kappa}{p}} (R_0+R)^{\frac{d}{p}} \volS{d}^{\frac{1}{p}} < \infty.
\end{align*}
$\Lpn{f}{p}{} \leq \Apn{f}{\kappa,p}{}$ is obvious by taking $R=0$. In addition we find for any $R>0$,
\begin{align*}
\Lpn{f}{\infty}{} \leq& \left(\frac{1}{\volS{d}R^d} \int_{\RN^d} \sup_{\abs{\bar{\bz}}\leq R} \abs{f(\bz+\bar{\bz})}^p~\intd\bz\right)^{\frac 1p} \leq \left(\frac{\left(1+R\right)^\kappa}{\volS{d}R^d}\right)^{\frac 1p} \Apn{f}{\kappa,p}{} \\
\stackrel{R\text{ opt.}}{=}& \minR(\kappa,d)^{\frac 1p} \Apn{f}{\kappa,p}{}.
\end{align*}
\\

\noindent\textbf{(ii).} Let $f\in\Lp{p}{}$ and $\nabla f\in\Ap{\kappa,p}{}$, then for any $R>0$
\begin{align*}
(1+R)^{-\frac{\kappa+p}{p}} \left(\int_{\RN^d} \sup_{\abs{\bar{\bz}}\leq R} \abs{f(\bz+\bar{\bz})}^p \intd\bz\right)^{\frac 1p} \leq& (1+R)^{-\frac{\kappa+p}{p}} \left(\int_{\RN^d} \left(\abs{f(\bz)} + R \sup_{\abs{\bar{\bz}}\leq R} \abs{\nabla f(\bz+\bar{\bz})}\right)^p\intd\bz\right)^{\frac 1p} \\
\leq& \Lpn{f}{p}{} + \Apn{\nabla f}{\kappa,p}{}.
\end{align*}
\\

\noindent\textbf{(iii).} Obvious. \\

\noindent\textbf{(iv).} For $R>0$, we have
\begin{align*}
&(1+R)^{-\frac{\kappa}{p}-\frac{\lambda}{q}} \int_{\RN^d} \sup_{\abs{\bar{\bz}}\leq R} \abs{f(\bz+\bar{\bz})~g(\bz+\bar{\bz})} \intd\bz \\
\leq& (1+R)^{-\frac \kappa p - \frac \lambda q} \int_{\RN^d} \left(\sup_{\abs{\bar{\bz}}\leq R} \abs{f(\bz+\bar{\bz})}\right) \left(\sup_{\abs{\bar{\bz}}\leq R} \abs{g(\bz+\bar{\bz})}\right)\intd\bz 
\stackrel{\text{Hölder}}{\leq} \Apn{f}{\kappa,p}{} \Apn{g}{\lambda,q}{}. \quad \qedhere
\end{align*}
\end{proof}

\begin{lem}
\label{lem:compact-dense}
Let $k\in\NN$, $\kappa\geq d$, and $p\geq 1$ be arbitrary. Then the functions $\Cp{k}{c}$ of compact support are dense in $\Bp{k,\kappa,d}{}$.
\end{lem}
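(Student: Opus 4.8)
The plan is to show that every $f\in\Bp{k,\kappa,p}{}$ is approximated in $\Bpn{\cdot}{k,\kappa,p}{}$ by its smooth spatial truncations; this already yields the lemma, since a truncation of a $\mathcal{C}^k$ function is again $\mathcal{C}^k$ with compact support. Concretely, I would fix $\chi\in\TF{\RN^d}$ with $\chi\equiv 1$ on $\ball{\bn}{1}$ and $\supp\chi\subseteq\ball{\bn}{2}$, set $\chi_R(\bz)\equiv\chi(\bz/R)$ and $f_R\equiv\chi_R f$, note $f_R\in\Cp{k}{c}\subseteq\Bp{k,\kappa,p}{}$ by Lemma \ref{lem:Ap}-(i), and prove $\Apn{\Del{\alpha}(f-f_R)}{\kappa,p}{}\to 0$ as $R\to\infty$ for each multi-index $\abs{\alpha}\le k$. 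By Leibniz' rule $\Del{\alpha}(f-f_R)=(1-\chi_R)\Del{\alpha}f-\sum_{0<\beta\le\alpha}\binom{\alpha}{\beta}\Del{\beta}\chi_R\,\Del{\alpha-\beta}f$. For $R\ge 1$ and $\beta\neq 0$ one has $\bigl|\Del{\beta}\chi_R\,\Del{\alpha-\beta}f\bigr|\le C_\chi R^{-1}\bigl|\Del{\alpha-\beta}f\bigr|$ pointwise, so, using that $\Apn{\cdot}{\kappa,p}{}$ is subadditive and monotone in the pointwise modulus, the sum contributes at most $C_\chi' R^{-1}\max_{\abs{\gamma}<k}\Apn{\Del{\gamma}f}{\kappa,p}{}\to 0$. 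Since moreover $\bigl|(1-\chi_R)\Del{\alpha}f\bigr|\le\bigl|\Del{\alpha}f\bigr|\,\unity_{\{\abs{\bz}\ge R\}}$, the lemma is reduced to the single claim: for every $\psi\in\Ap{\kappa,p}{}$, $\Apn{\psi\,\unity_{\{\abs{\bz}\ge S\}}}{\kappa,p}{}\to 0$ as $S\to\infty$.

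To prove this claim I would first establish the auxiliary norm estimate
\[
\Apn{g}{\kappa,p}{}\ \le\ C_{d}\,\Bigl\|\,\bz\mapsto\sup_{\abs{\bar{\bz}}\le 1}\bigl|g(\bz+\bar{\bz})\bigr|\,\Bigr\|_{\Lp{p}{}}\qquad(g\in\Ap{\kappa,p}{}),
\]
with a constant depending only on $d$; this is where $\kappa\ge d$ enters. For $\rho\le 1$ the bound is trivial. For $\rho\ge 1$ one tiles $\RN^d$ by half-open cubes of side $\rho$ centred at a $\rho$-spaced lattice; each ball $\ball{\bz}{\rho}$ with $\bz$ in a given cube lies in a fixed dilate $B_i$ of that cube, so $\int\sup_{\ball{\bz}{\rho}}|g|^p\,\intd\bz\le\rho^{d}\sum_i\bigl(\sup_{B_i}|g|\bigr)^{p}$; each summand is then estimated from above by a constant times $\int_{\ball{\bv_i}{1/2}}\bigl(\sup_{\abs{\bar{\bz}}\le 1}|g(\cdot+\bar{\bz})|\bigr)^{p}\intd\bz$ for a near-maximiser $\bv_i\in B_i$, and, crucially, each of the balls $\ball{\bv_i}{1/2}$ meets only $O_d(1)$ indices since the $\bv_i$ are attached to a $\rho$-spaced lattice; hence $\int\sup_{\ball{\bz}{\rho}}|g|^p\,\intd\bz\le C_d\,\rho^{d}\,\|\sup_{\abs{\bar{\bz}}\le 1}|g(\cdot+\bar{\bz})|\|_{\Lp{p}{}}^{p}$, and $(1+\rho)^{-\kappa}\rho^{d}\le 1$ because $\kappa\ge d$. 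Granting this, let $G_\psi(\bz)\equiv\sup_{\abs{\bar{\bz}}\le 1}|\psi(\bz+\bar{\bz})|$; taking $\rho=1$ in the definition of the norm shows $G_\psi\in\Lp{p}{}$, while $\sup_{\abs{\bar{\bz}}\le 1}\bigl(|\psi(\bz+\bar{\bz})|\,\unity_{\{\abs{\bz+\bar{\bz}}\ge S\}}\bigr)\le G_\psi(\bz)\,\unity_{\{\abs{\bz}\ge S-1\}}$ tends to $0$ pointwise and is dominated by $G_\psi$. By dominated convergence and the auxiliary estimate, $\Apn{\psi\,\unity_{\{\abs{\bz}\ge S\}}}{\kappa,p}{}\le C_d\,\bigl\|G_\psi\,\unity_{\{\abs{\bz}\ge S-1\}}\bigr\|_{\Lp{p}{}}\to 0$, which proves the claim and hence the lemma.

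The step I expect to demand the most care is the auxiliary norm estimate — essentially the observation that for $\kappa\ge d$ the spaces $\Ap{\kappa,p}{}$ all carry equivalent norms (with $d$-dependent constants), namely that of the Wiener-amalgam type quantity $\|\sup_{\ball{\cdot}{1}}|g|\|_{\Lp{p}{}}$. The delicate point is to set up the covering so that the only loss in $\rho$ is the volume factor $\rho^{d}$, which $\kappa\ge d$ absorbs, rather than a larger power; this is what dictates the bookkeeping with cubes of side exactly $\rho$ and maximisers tied to the lattice. The remaining ingredients — the Leibniz splitting, the vanishing of the cutoff-derivative terms, and the final dominated convergence — are routine, and since none of them is sensitive to the integrability exponent the same argument yields density of $\Cp{k}{c}$ in $\Bp{k,\kappa,p}{}$ for every $p\ge 1$.
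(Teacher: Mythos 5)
Your proof is correct, and the overall skeleton — radial cutoff $\chi_R$, Leibniz rule, easy disposal of the cutoff-derivative terms via the $R^{-1}$ scaling and monotonicity of $\Apn{\cdot}{\kappa,p}{}$ in the pointwise modulus — matches the paper's. The genuine difference lies in how you handle the main term $(1-\chi_R)\Del{\alpha}f$. The paper asserts that $(\Del{\alpha}f)\,\eta_\epsilon$ "converges to $\Del{\alpha}f$" and that the remaining terms are "integrably dominated", but this is not a straightforward dominated-convergence argument: for $g_S \equiv g\,\unity_{\{\abs{\bz}\ge S\}}$ one gets $\int\sup_{\abs{\bar{\bz}}\le R}\abs{g_S}^p \to 0$ for each fixed $R$, yet the $\Apn{\cdot}{\kappa,p}{}$ norm carries a $\sup_R$ \emph{outside} the integral, and a pointwise-decreasing family of functions of $R$ need not have supremum tending to zero. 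You correctly identify this as the crux and close it with a genuinely new ingredient: the covering argument showing that for $\kappa\ge d$ the norm $\Apn{\cdot}{\kappa,p}{}$ is equivalent (up to a $d$-dependent constant) to the fixed-scale Wiener-amalgam quantity $\bigl\|\sup_{\abs{\bar{\bz}}\le 1}\abs{g(\cdot+\bar{\bz})}\bigr\|_{\Lp{p}{}}$, after which dominated convergence does apply. The covering details — tiling by $\rho$-cubes, near-maximisers $\bv_i$, bounded overlap of $\ball{\bv_i}{1/2}$ because the $\bv_i$ sit in dilates of cells of a $\rho$-spaced lattice and $\rho\ge 1$, and the absorption of $\rho^d$ by $(1+\rho)^{\kappa}$ — all check out. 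What this buys: your argument is airtight where the paper's is terse to the point of having a gap, and the equivalent-norm lemma is of independent use (it is precisely the reason the restriction $\kappa\ge d$ appears in the definition of $\Ap{\kappa,p}{}$). The cost is the extra bookkeeping of the covering lemma, which the paper avoids by not spelling out the final limit.

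One small point you should make explicit when writing this up: since functions in $\Ap{\kappa,p}{}$ are only assumed measurable, the "near-maximiser" $\bv_i\in B_i$ exists with $\abs{g(\bv_i)}\ge(1-\delta)\sup_{B_i}\abs{g}$ by definition of the (pointwise, not essential) supremum; the paper's norm is written with a pointwise $\sup$ inside the integral, so this is consistent, but it is worth saying.
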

\begin{proof}
\textbf{(i) $\Cp{k}{c}\subseteq\Bp{k,\kappa,p}{}$.} As all derivatives have compact support, this is implied by Lemma \ref{lem:Ap}-(i). \\

\noindent\textbf{(ii) $\Bp{k,\kappa,p}{}\subseteq\overline{\Cp{k}{c}}$.} Let $f\in\Bp{k,\kappa,p}{}$ be some function. Let $\eta:\RN^d \to [0,1]$ be some $\Cp{k}{c}$ function, such that $\eta \equiv 1$ on $\ball{\bn}{1}$ and $\eta\equiv 0$ on $\ball{\bn}{2}^c$. For any $\epsilon>0$ we then define $\eta_\epsilon(\bx)\equiv \eta(\epsilon\bx)$. Obviously, $f\eta_\epsilon\in\Cp{k}{c}$. We want to show that for any $\alpha\in\NN_0^d, \abs{\alpha}\leq k$, $\Apn{\Del{\alpha}(f\eta_\epsilon)-\Del{\alpha}f}{\kappa,p}{} \to 0$ for $\epsilon\downarrow 0$. In fact, we have by the Leibniz rule,
\begin{align*}
\Del{\alpha}(f\eta_\epsilon) =& \sum_{\beta\in\NN_0^d, \beta\leq\alpha} \binom{\alpha}{\beta} \left(\Del{\alpha-\beta}f\right) \left(\Del{\beta}\eta_\epsilon\right) = \left(\Del{\alpha}f\right) \eta_\epsilon + \sum_{\beta\in\NN_0^d, \beta\leq\alpha, \beta\neq\bn} \binom{\alpha}{\beta} \left(\Del{\alpha-\beta}f\right) \left(\Del{\beta}\eta_\epsilon\right)
\end{align*}
with the usual notation $\binom{\alpha}{\beta}\equiv\prod_i \binom{\alpha_i}{\beta_i}$. The first term converges to $\Del{\alpha}f$ and as $\left(\Del{\beta}\eta_\epsilon\right)(\bx) = \epsilon^{\abs{\beta}} \left(\Del{\beta}\eta\right)(\epsilon\bx)$, all the other terms are integrably dominated for bounded $\epsilon$ and their $\Ap{\kappa,p}{}$ norms vanish in the limit $\epsilon\downarrow 0$. This proves $\lim_{\epsilon\downarrow 0} \Apn{\Del{\alpha}(f\eta_\epsilon)-\Del{\alpha}f}{\kappa,p}{}=0$. \qedhere
\end{proof}

\begin{thm}
\label{thm:ap-banach}
For any $k\in\NN_0$, $\kappa\geq d$, and $p\geq1$ $\Bp{k,\kappa,p}{}$ is a Banach space.
\end{thm}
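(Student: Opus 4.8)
The plan is to derive completeness of $\Bp{k,\kappa,p}{}$ from completeness of the model space $\Ap{\kappa,p}{}$, and to obtain the latter from completeness of $\Lp{p}{}$ and $\Lp{\infty}{}$ together with a Fatou-type argument. First I would record that $\Apn{\cdot}{\kappa,p}{}$ and $\Bpn{\cdot}{k,\kappa,p}{}$ are genuine norms on vector spaces, which is routine: homogeneity is clear; the triangle inequality for $\Apn{\cdot}{\kappa,p}{}$ follows from the pointwise bound $\sup_{\abs{\bar\bz}\leq R}\abs{(f+g)(\bz+\bar\bz)}\leq\sup_{\abs{\bar\bz}\leq R}\abs{f(\bz+\bar\bz)}+\sup_{\abs{\bar\bz}\leq R}\abs{g(\bz+\bar\bz)}$, Minkowski's inequality inside the integral, and taking the supremum over $R$, whence the triangle inequality for $\Bpn{\cdot}{k,\kappa,p}{}$ via Minkowski in $\ell^p$; definiteness holds because $\Lpn{f}{p}{}\leq\Apn{f}{\kappa,p}{}\leq\Bpn{f}{k,\kappa,p}{}$ by Lemma \ref{lem:Ap}-(i) (take $R=0$, resp.\ the $\alpha=\bn$ summand).

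Next I would prove completeness of $\Ap{\kappa,p}{}$. Let $(f_n)$ be Cauchy there. By Lemma \ref{lem:Ap}-(i) we have $\Lpn{f_n-f_m}{p}{}\leq\Apn{f_n-f_m}{\kappa,p}{}$ and $\Lpn{f_n-f_m}{\infty}{}\leq\minR(\kappa,d)^{\frac1p}\Apn{f_n-f_m}{\kappa,p}{}$, so $(f_n)$ is Cauchy in both $\Lp{p}{}$ and $\Lp{\infty}{}$; completeness of these spaces and uniqueness of limits furnish $f\in\Lp{p}{}\cap\Lp{\infty}{}$ with $f_n\to f$ in both norms. It remains to show $f\in\Ap{\kappa,p}{}$ and $\Apn{f_n-f}{\kappa,p}{}\to0$. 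Given $\epsilon>0$, pick $N$ with $\Apn{f_n-f_m}{\kappa,p}{}<\epsilon$ for $n,m\geq N$, and fix $n\geq N$ and $R\geq0$. From $\bigl|\sup_{\abs{\bar\bz}\leq R}\abs{f_n(\bz+\bar\bz)-f_m(\bz+\bar\bz)}-\sup_{\abs{\bar\bz}\leq R}\abs{f_n(\bz+\bar\bz)-f(\bz+\bar\bz)}\bigr|\leq\Lpn{f_m-f}{\infty}{}$, the integrand $\sup_{\abs{\bar\bz}\leq R}\abs{f_n(\bz+\bar\bz)-f_m(\bz+\bar\bz)}^p$ converges, for a.e.\ $\bz$ and as $m\to\infty$, to $\sup_{\abs{\bar\bz}\leq R}\abs{f_n(\bz+\bar\bz)-f(\bz+\bar\bz)}^p$, so Fatou's lemma gives
\begin{equation*}
(1+R)^{-\frac\kappa p}\left(\int_{\RN^d}\sup_{\abs{\bar\bz}\leq R}\abs{f_n(\bz+\bar\bz)-f(\bz+\bar\bz)}^p\intd\bz\right)^{\frac1p}\leq\liminf_{m\to\infty}\Apn{f_n-f_m}{\kappa,p}{}\leq\epsilon.
\end{equation*}
Taking the supremum over $R$ yields $\Apn{f_n-f}{\kappa,p}{}\leq\epsilon$ for all $n\geq N$, whence $f_n-f\in\Ap{\kappa,p}{}$, so $f\in\Ap{\kappa,p}{}$, and $\Apn{f_n-f}{\kappa,p}{}\to0$. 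Thus $\Ap{\kappa,p}{}$ is complete.

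To pass to $\Bp{k,\kappa,p}{}$, let $(f_n)$ be Cauchy there. For every multi-index $\alpha$ with $\abs\alpha\leq k$, $(\Del{\alpha}f_n)$ is Cauchy in $\Ap{\kappa,p}{}$ because $\Apn{\Del{\alpha}(f_n-f_m)}{\kappa,p}{}\leq\Bpn{f_n-f_m}{k,\kappa,p}{}$; by the previous paragraph it converges in $\Ap{\kappa,p}{}$, and — by Lemma \ref{lem:Ap}-(i) together with continuity of the $\Del{\alpha}f_n$ — uniformly on $\RN^d$, to a continuous function $g_\alpha\in\Ap{\kappa,p}{}$. I would then set $f\equiv g_{\bn}$ and note that, since $f_n\to f$ and $\partial_i f_n\to g_{\be_i}$ uniformly for each coordinate $i$, passing to the limit in $f_n(\bz+t\be_i)-f_n(\bz)=\int_0^t\partial_i f_n(\bz+s\be_i)\intd s$ shows $\partial_i f=g_{\be_i}$; iterating this on the sequences $(\Del{\beta}f_n)$ along an induction on $\abs\alpha$ gives $f\in\Cm{k}{\RN^d\to\CN}$ with $\Del{\alpha}f=g_\alpha$ for all $\abs\alpha\leq k$. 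Hence $f\in\Bp{k,\kappa,p}{}$ and $\Bpn{f_n-f}{k,\kappa,p}{}=\bigl(\sum_{\abs\alpha\leq k}\Apn{\Del{\alpha}f_n-g_\alpha}{\kappa,p}{}^p\bigr)^{\frac1p}\to0$, which is the claim.

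I expect the only real obstacle to be the completeness of $\Ap{\kappa,p}{}$, precisely the interchange of the limit $m\to\infty$ with the supremum $\sup_{\abs{\bar\bz}\leq R}$ sitting inside the integral. This is where the $\Lp{\infty}{}$-embedding of Lemma \ref{lem:Ap}-(i) is indispensable: it upgrades $\Lp{p}{}$-convergence to convergence strong enough to control $\sup_{\abs{\bar\bz}\leq R}\abs{f_n(\bz+\bar\bz)-f_m(\bz+\bar\bz)}$ pointwise in $\bz$ and hence to license Fatou's lemma. Once $\Ap{\kappa,p}{}$ is settled, the step to $\Bp{k,\kappa,p}{}$ is the standard fact that a uniform limit of $\mathcal{C}^{k}$ functions whose derivatives up to order $k$ also converge uniformly is again $\mathcal{C}^{k}$, and presents no difficulty.
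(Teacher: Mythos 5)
Your proposal is correct and follows essentially the same route as the paper: both extract a limit function by exploiting the $\Lp{\infty}{}$-embedding of $\Ap{\kappa,p}{}$ (the paper phrases this as continuous embedding of $\Bp{k,\kappa,p}{}$ into $\Cp{k}{}$, you as Cauchy in $\Lp{p}{}\cap\Lp{\infty}{}$), and both then establish membership of the limit and norm convergence via the reverse triangle inequality for the local supremum together with Fatou's lemma. Your version is somewhat more explicit — checking the norm axioms, isolating completeness of $\Ap{\kappa,p}{}$ as a lemma, and spelling out the fundamental-theorem-of-calculus argument to identify $\Del{\alpha}f$ — whereas the paper compresses the passage from $k=0$ to general $k$ to one remark, but the underlying mechanism is identical.
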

\begin{proof}
It remains to prove the completeness. Let $\{f_n\}$ be a Cauchy sequence in $\Bp{k,\kappa,p}{}$. Because $\Ap{\kappa,p}{}$ embeds continuously into $\Cp{0}{}$, $\Bp{k,\kappa,p}{}$ embeds into $\Cp{k}{}$. Therefore, $\{f_n\}$ is Cauchy in $\Cp{k}{}$ and possesses a uniform limit in this Banach space, say $f$.

We now want to show that $\Bpn{f_n-f}{k,\kappa,p}{}\to 0$ and therefore $f\in\Bp{k,\kappa,p}{}$ is the right limit. It suffices to check the case $k=0$, as all the arguments can be repeated for the derivatives.

Let $\epsilon>0$ be arbitrary and choose $N\in\NN$, s.t. by the Cauchy property for $n,m\geq N$ $\Apn{f_n-f_m}{\kappa,p}{}\leq \epsilon$. Now let $\bx\in\RN^d$ and we find by the reverse triangle inequality for the supremum norm on $\ball{\bx}{R}\subseteq\RN^d$
\begin{align*}
\abs{\sup_{\abs{\bar{\bx}}\leq R} \abs{f_n(\bx+\bar{\bx})-f_m(\bx+\bar{\bx})} - \sup_{\abs{\bar{\bx}}\leq R} \abs{f(\bx+\bar{\bx})-f_m(\bx+\bar{\bx})}} \leq& \sup_{\abs{\bar{\bx}}\leq R} \abs{f_n(\bx+\bar{\bx})-f(\bx+\bar{\bx})} \\
\leq& \Lpn{f_n-f}{\infty}{\bx} \stackrel{n\to\infty}{\to} 0,
\end{align*}
in other words,
\begin{align*}
\lim_{n\to\infty} \sup_{\abs{\bar{\bx}}\leq R} \abs{f_n(\bx+\bar{\bx})-f_m(\bx+\bar{\bx})} = \sup_{\abs{\bar{\bx}}\leq R} \abs{f(\bx+\bar{\bx})-f_m(\bx+\bar{\bx})}.
\end{align*}
Now, for any $R>0$, using this equation at every $\bx$ and in combination with Fatou's Lemma, we derive for any $m\geq N$
\begin{align*}
&\frac{1}{(1+R)^{\frac{\kappa}{p}}} \left(\int_{\RN^d} \sup_{\abs{\bar{\bx}}\leq R} \abs{f(\bx+\bar{\bx})-f_m(\bx+\bar{\bx})}^p \intd\bx\right)^{\frac 1p} \\
=& \frac{1}{(1+R)^{\frac{\kappa}{p}}} \left(\int_{\RN^d} \liminf_{n\to\infty} \sup_{\abs{\bar{\bx}}\leq R} \abs{f_n(\bx+\bar{\bx})-f_m(\bx+\bar{\bx})}^p \intd\bx\right)^{\frac 1p} \\
\leq& \liminf_{n\to\infty} \frac{1}{(1+R)^{\frac{\kappa}{p}}} \left(\int_{\RN^d} \sup_{\abs{\bar{\bx}}\leq R} \abs{f_n(\bx+\bar{\bx})-f_m(\bx+\bar{\bx})}^p \intd\bx\right)^{\frac 1p} \\
\leq& \liminf_{n\to\infty} \Apn{f_n-f_m}{\kappa,p}{} \leq \epsilon.
\end{align*}
Therefore, $\Apn{f-f_m}{\kappa,p}{} \leq\epsilon$ and $f\in\Ap{\kappa,p}{}$ is indeed the limit. \qedhere
\end{proof}

\section{Inequalities}

\begin{lem}[Newtonian potential]
\label{lem:newtonian-ineq}
Define the Newtonian interaction potential in space dimension $\ddim\geq3$ by
\begin{equation}
\label{eqn:newtonian-interaction-potential}
\newtonian{\bx} \equiv \frac{\abs{\bx}^{2-\ddim}}{\surS{\ddim}\ddim\left(2-\ddim\right)}.
\end{equation}
Let $\rho\in\Cp{0,\alpha}{\bx}\cap\Lp{1}{\bx}$ for some $\alpha\in(0,1]$ be Hölder continuous and $U_\rho:\RX\to\RN$ be given by
\begin{align*}
U_\rho(\bx) = \convol{\newtonian{\cdot}}{\rho}(\bx) = \int_{\RX} \newtonian{\bx-\by}~\rho(\by)~\intd\by.
\end{align*}
Then $U_\rho\in\Cp{2,\alpha}{\bx}$ and the following representation formulae hold for any $\bx_0\in\RX$ and $\abs{\bx-\bx_0} < \frac R2$:
\begin{align}
\label{eqn:DU-formula}
\partial_j U_\rho(\bx) =& \int_{\RX} \partial_j\newtonian{\bx-\by}~\rho(\by)~\intd\by, \\
\nonumber
\partial_i\partial_j U_\rho(\bx) =& \int_{\RX-\ball{\bx_0}{R}} \partial_i\partial_j \newtonian{\bx-\by}~\rho(\by)~\intd\by + \int_{\ball{\bx_0}{R}} \partial_i\partial_j \newtonian{\bx-\by}\left(\rho(\by)-\rho(\bx)\right)~\intd\by \\
\label{eqn:D2U-formula}
&- \rho(\bx)\int_{\partial\ball{\bx_0}{R}} \partial_j\newtonian{\bx-\by} ~\nu_i(\by)~\intd s(\by).
\end{align}
In addition, for any $\bx_1,\bx_2\in\RX$ we find the estimate
\begin{equation}
\label{eqn:D2U-Hoelder-estimate}
\abs{\naX^2 U_\rho(\bx_1)-\naX^2 U_\rho(\bx_2)} \leq L(\ddim,\alpha) \sup_{\xi_1\neq\xi_2} \frac{\abs{\rho(\xi_1)-\rho(\xi_2)}}{\abs{\xi_1-\xi_2}^\alpha} \abs{\bx_1-\bx_2}^\alpha
\end{equation}
In particular, $U_\rho\in\Cp{2,\alpha}{\bx}$ and we find the estimates:
\begin{equation}
\label{eqn:DU-estimate}
\forall p\in[1,\ddim)~\exists \cstNewtonLp{p}>0: \Lpn{\naX U_\rho}{\infty}{\bx} \leq \cstNewtonLp{p} \Lpn{\rho}{\infty}{\bx}^{1-\frac p\ddim}~\Lpn{\rho}{p}{\bx}^{\frac p\ddim}.
\end{equation}
Finally, in the case $\alpha=1$, there is $\cstNewtonLn>0$, s.t. for every $0<R\leq r$
\begin{equation}
\label{eqn:D2U-estimate}
\Lpn{\naX^2 U_\rho}{\infty}{\bx} \leq \cstNewtonLn\left[\left(1+\ln\frac rR\right)\Lpn{\rho}{\infty}{\bx}+\frac{1}{r^\ddim} \Lpn{\rho}{1}{\bx}+R\Lpn{\naX\rho}{\infty}{\bx}\right]
\end{equation}
\begin{equation}
\label{eqn:D2U-simp-estimate}
\Lpn{\naX^2 U_\rho}{\infty}{\bx} \leq \cstNewtonLn\left[\left(1+\Lpn{\rho}{\infty}{\bx}\right)\left(1+\ln_+\Lpn{\naX\rho}{\infty}{\bx}\right)+\Lpn{\rho}{1}{\bx}\right],
\end{equation}
where $\ln_+ x = \ln\max\{1,x\}$.
\end{lem}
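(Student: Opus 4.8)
The plan is to dispatch the classical regularity statements by the standard potential-theoretic argument and then derive the three quantitative bounds by an elementary splitting of the convolution integral. For the first derivative: since $\nabla\newtonian{\cdot}$ is positively homogeneous of degree $1-\ddim$ and hence locally integrable on $\RX$, the convolution $\convol{\nabla\newtonian{\cdot}}{\rho}$ is well defined for $\rho\in\Lp{1}{\bx}\cap\Lp{\infty}{\bx}$, and differentiation under the integral sign in $\convol{\newtonian{\cdot}}{\rho}$ is justified by dominating the difference quotients after cutting out the singularity; this yields \eqref{eqn:DU-formula} and the continuity of $\naX U_\rho$. For the second derivative the kernel $\partial_i\partial_j\newtonian{\cdot}$ is homogeneous of degree $-\ddim$ and no longer locally integrable, so I would follow the classical scheme (cf.\ Gilbarg--Trudinger, Ch.~4): regularize $\partial_j\newtonian{\cdot}$ away from a ball of radius $\epsilon$, differentiate the resulting smooth convolution, split the integral over $\ball{\bx_0}{R}$ and its complement, write $\rho(\by)=\bigl(\rho(\by)-\rho(\bx)\bigr)+\rho(\bx)$ on the inner ball, pull the constant part out and apply the divergence theorem to produce the boundary term, and use the H\"older continuity of $\rho$ to see that the remaining inner integrand $\abs{\partial_i\partial_j\newtonian{\bx-\by}}\,\abs{\rho(\by)-\rho(\bx)}$ is dominated by $\abs{\bx-\by}^{\alpha-\ddim}$ and hence integrable; letting $\epsilon\downarrow0$ gives \eqref{eqn:D2U-formula}. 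The H\"older estimate \eqref{eqn:D2U-Hoelder-estimate}, and with it $U_\rho\in\Cp{2,\alpha}{\bx}$, then follows from \eqref{eqn:D2U-formula} by the usual term-by-term comparison of the representation evaluated at $\bx_1$ and $\bx_2$ with $R$ chosen proportional to $\abs{\bx_1-\bx_2}$; this is the most delicate classical ingredient, and I would cite it rather than reproduce it in full.

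For the gradient bound \eqref{eqn:DU-estimate} I start from \eqref{eqn:DU-formula} and the homogeneity $\abs{\nabla\newtonian{\bz}}=c\,\abs{\bz}^{1-\ddim}$ and split the $\by$-integral at $\abs{\bx-\by}=R$. On $\abs{\bx-\by}\le R$ I bound $\abs{\rho}$ by $\Lpn{\rho}{\infty}{\bx}$ and use that $\int_{\abs{\bz}\le R}\abs{\bz}^{1-\ddim}\,\intd\bz$ is a dimensional constant times $R$; on $\abs{\bx-\by}>R$ I apply H\"older with exponent $p$ and its conjugate, noting that the outer radial integral of $\abs{\bz}^{(1-\ddim)p/(p-1)}$ converges precisely because $p<\ddim$, so that this piece is a dimensional constant times $\Lpn{\rho}{p}{\bx}\,R^{1-\ddim/p}$. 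Optimizing over $R$ by balancing $\Lpn{\rho}{\infty}{\bx}R$ against $\Lpn{\rho}{p}{\bx}R^{1-\ddim/p}$ forces $R\propto\bigl(\Lpn{\rho}{p}{\bx}/\Lpn{\rho}{\infty}{\bx}\bigr)^{p/\ddim}$ and delivers exactly the interpolation exponents of \eqref{eqn:DU-estimate}.

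For the Hessian bound \eqref{eqn:D2U-estimate} I apply \eqref{eqn:D2U-formula} with $\bx_0=\bx$, which is admissible for every $R>0$ since then $\abs{\bx-\bx_0}=0<R/2$. The surface term over $\partial\ball{\bx}{R}$ is bounded by the size of $\partial_j\newtonian{\cdot}$ on that sphere, of order $R^{1-\ddim}$, times its surface measure, of order $R^{\ddim-1}$, times $\Lpn{\rho}{\infty}{\bx}$, i.e.\ by a constant multiple of $\Lpn{\rho}{\infty}{\bx}$; the inner integral over $\ball{\bx}{R}$ is bounded using $\abs{\rho(\by)-\rho(\bx)}\le\Lpn{\naX\rho}{\infty}{\bx}\abs{\bx-\by}$ together with $\int_{\abs{\bz}\le R}\abs{\bz}^{1-\ddim}\,\intd\bz\propto R$, which is the $R\,\Lpn{\naX\rho}{\infty}{\bx}$ term. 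For the exterior integral I split a second time at $\abs{\bx-\by}=r$: on the annulus $R<\abs{\bx-\by}\le r$ the estimate $\abs{\partial_i\partial_j\newtonian{\bz}}\le c_\ddim\abs{\bz}^{-\ddim}$ combined with $\int_{R<\abs{\bz}\le r}\abs{\bz}^{-\ddim}\,\intd\bz\propto\ln(r/R)$ produces the $\bigl(1+\ln\tfrac rR\bigr)\Lpn{\rho}{\infty}{\bx}$ term, while on $\abs{\bx-\by}>r$ the kernel is dominated by $c_\ddim\,r^{-\ddim}$, giving the $r^{-\ddim}\Lpn{\rho}{1}{\bx}$ term. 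Summing these pieces and taking the supremum over $\bx$ and over the matrix entries yields \eqref{eqn:D2U-estimate}. Then \eqref{eqn:D2U-simp-estimate} follows from \eqref{eqn:D2U-estimate} by the specialization $r=1$, $R=1/\max\{1,\Lpn{\naX\rho}{\infty}{\bx}\}\le 1$, which makes $R\,\Lpn{\naX\rho}{\infty}{\bx}\le 1$, $\ln\tfrac rR=\ln_+\Lpn{\naX\rho}{\infty}{\bx}$ and $r^{-\ddim}\Lpn{\rho}{1}{\bx}=\Lpn{\rho}{1}{\bx}$, after which collecting terms into $\bigl(1+\Lpn{\rho}{\infty}{\bx}\bigr)\bigl(1+\ln_+\Lpn{\naX\rho}{\infty}{\bx}\bigr)+\Lpn{\rho}{1}{\bx}$ (absorbing the stray additive constant) is immediate.

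The main obstacle is the classical pair consisting of the second-derivative representation formula \eqref{eqn:D2U-formula} and the H\"older estimate \eqref{eqn:D2U-Hoelder-estimate}: once these are available, the quantitative bounds \eqref{eqn:DU-estimate}, \eqref{eqn:D2U-estimate} and \eqref{eqn:D2U-simp-estimate} are routine splitting-and-optimizing arguments with no hidden difficulty.
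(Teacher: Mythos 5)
Your proposal is correct and follows the standard potential-theoretic route. The paper's own proof of this lemma is just a one-line citation to Gilbarg--Trudinger, Chapter~4; you have essentially reconstructed the argument the paper defers to that reference, including the splitting-at-$R$ optimization for \eqref{eqn:DU-estimate}, the three-zone decomposition (inner ball, annulus, exterior) for \eqref{eqn:D2U-estimate} with $\bx_0=\bx$, and the specialization $r=1$, $R=1/\max\{1,\Lpn{\naX\rho}{\infty}{\bx}\}$ for \eqref{eqn:D2U-simp-estimate}, all of which check out.
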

\begin{proof}
The formulae are standard results from potential theory, see for example \cite[Chapter 4]{gilbargtrudinger}. \qedhere
\end{proof}

\section*{Conflict of interest statement}

I declare that there are no conflicts of interest, because this work has not been funded by third parties.

\bibliographystyle{abbrv}

\begin{thebibliography}{10}

\bibitem{arnold}
V.~Arnold.
\newblock {\em Ordinary Differential Equations}.
\newblock The MIT Press, Cambridge, Massachussetts, 1973.

\bibitem{froehlichknowlesschwarz}
J.~Fröhlich, A.~Knowles, and S.~Schwarz.
\newblock On the mean-field limit of bosons with {C}oulomb two-body
  interaction.
\newblock {\em Comm. Math. Phys.}, 288:1023--1058, 2009.

\bibitem{gilbargtrudinger}
D.~Gilbarg and N.~Trudinger.
\newblock {\em Elliptic Partial Differential Equations of Second Order}.
\newblock Grundlehren der mathematischen Wissenschaften. Springer, Zürich, 2nd
  edition, 1983.

\bibitem{lionsperthame}
P.-L. Lions and B.~Perthame.
\newblock Propagation of moments and regularity for the 3-dimensional
  {V}lasov-{P}oisson system.
\newblock {\em Invent. Math.}, 105:415--430, 1991.

\bibitem{marsdenweinstein}
J.~Marsden and A.~Weinstein.
\newblock The {H}amiltonian structure of the {M}axwell-{V}lasov equations.
\newblock {\em Phys. D}, 4(3):394--406, 1981/82.

\bibitem{morrison}
P.~Morrison.
\newblock The {M}axwell-{V}lasov equations as a continuous {H}amiltonian
  system.
\newblock {\em Phys. Lett. A}, 80(5--6):383--386, 1980.

\bibitem{pfaffelmoser}
K.~Pfaffelmoser.
\newblock Global classical solutions of the {V}lasov-{P}oisson system in three
  dimensions for general initial data.
\newblock {\em Journal of Differential Equations}, 95:281--303, 1992.

\bibitem{rein}
G.~Rein.
\newblock {\em Collisionless kinetic equations from astrophysics -- the
  {V}lasov-{P}oisson system}, volume 3 pp. 383--476 of {\em Handbook of
  differential equations: evolutionary equations}.
\newblock Elsevier/ North-Holland, Amsterdam, 2007.

\bibitem{schaeffer}
J.~Schaeffer.
\newblock Global existence of smooth solutions to the {V}lasov-{P}oisson system
  in three dimensions.
\newblock {\em Comm. Partial Differential Equations}, 16(8--9):1313--1335,
  1991.

\bibitem{yemorrisoncrawford}
H.~Ye, P.~Morrison, and J.~Crawford.
\newblock Poisson bracket for the {V}lasov equation on a symplectic leaf.
\newblock {\em Physics Letters A}, 156 no.1,2:96--100, 1991.

\end{thebibliography}

\end{document}